\newcommand{\tG}{\widetilde{G}}
\newcommand{\cqq}{\mathscr{D}}
\newcommand{\Sym}{\mathrm{Sym}}
\newcommand{\rSp}{\mathrm{Sp}}
\newcommand{\rO}{\mathrm{O}}
\newcommand{\frakp}{\mathfrak{p}}
\newcommand{\pr}{\mathrm{pr}}
\newcommand{\Rad}{\mathrm{Rad}}
\newcommand{\Hol}{\mathrm{Hol}}
\newcommand{\AC}{\mathrm{AC}}
\newcommand{\AV}{\mathrm{AV}}
\newcommand{\VC}{\mathrm{V}_\bC}
\newcommand{\depth}{\mathrm{depth}}
\newcommand{\wtM}{\widetilde{M}}
\newcommand{\wtMone}{{\widetilde{M}^{(1,1)}}}
\newcommand{\nullpp}{N(\fpp'^*)}
\newcommand{\nullp}{N(\fpp^*)}
\newcommand{\Aut}{\mathrm{Aut}}
\newcommand{\bN}{\mathbb{N}}
\newcommand{\bfB}{\mathbf{B}}
\newcommand{\bfA}{\mathbf{A}}
\newcommand{\bfE}{\mathbf{E}}
\newcommand{\bfdd}{\mathbf{d}}
\newcommand{\tg}{{\tilde{g}}}
\newcommand{\tk}{{\tilde{k}}}
\newcommand{\sfg}{{\mathsf{g}}}
\newcommand{\sfk}{{\mathsf{k}}}
\newcommand{\sfp}{{\mathsf{p}}}
\newcommand{\sfG}{{\mathsf{G}}}
\newcommand{\sfK}{{\mathsf{K}}}
\def\Wcp{W}
\providecommand{\bcN}{{\overline{\cN}}}
\providecommand{\pcN}{{\partial\cN}}
\providecommand{\bcO}{{\overline{\cO}}}
\def\bcOp{\overline{\cO'}}
\def\frakN{\mathfrak{N}}
\def\topform{\mbox{$\bigwedge^{\! \mathrm{top} \, }$}}
\def\barxi{{\overline{\xi}}}
\def\Stab{{\rm Stab}}
\def\Ad{{\rm Ad}}
\def\inn#1#2{\left\langle{#1},{#2}\right\rangle}
\def\Sp{{\rm Sp}}
\def\sp{{\mathfrak{sp}}}
\def\rSp{{\rm Sp}}
\def\O{{\rm O}}
\def\rO{{\rm O}}
\def\det{{\rm det}}
\def\wtE{\widetilde{E}}
\def\wtG{\widetilde{G}}
\def\wtK{\widetilde{K}}
\def\wtM{\widetilde{M}}
\def\fgg{\mathfrak{g}}
\def\fkk{\mathfrak{k}}
\def\fmm{\mathfrak{m}}
\def\fpp{\mathfrak{p}}
\def\fss{\mathfrak{s}}
\def\sA{\mathscr{A}}
\def\sB{\mathscr{B}}
\def\sH{\mathscr{H}}
\def\sL{\mathscr{L}}
\def\sO{\mathscr{O}}
\def\sQ{\mathscr{Q}}
\def\sY{\mathscr{Y}}
\def\sZ{\mathscr{Z}}
\def\rT{{\rm{T}}}
\def\rP{{\rm{P}}}
\def\rU{{\rm{U}}}
\def\rV{{\mathrm{V}}}
\def\cH{\mathcal{H}}
\def\cN{\mathcal{N}}
\def\cC{\mathcal{C}}
\def\cD{\mathcal{D}}
\def\cO{\mathcal{O}}
\def\cS{\mathcal{S}}
\def\cU{\mathcal{U}}
\def\bC{{\mathbb{C}}}
\def\bH{{\mathbb{H}}}
\def\bR{{\mathbb{R}}}
\def\GL{\mathrm{GL}}
\def\wtSp{\widetilde{\mathrm{Sp}}}
\def\wtrU{\widetilde{\mathrm{U}}}
\def\fsp{{\mathfrak{sp}}}
\def\Ann{{\mathrm{Ann}\,}}
\def\Ker{{\rm Ker}\,}
\def\Lie{{\rm Lie}}
\def\Hom{{\rm Hom}}
\def\Ind{{\rm Ind}}
\def\Spec{{\rm Spec\,}}
\def\Supp{\mathrm{Supp}}
\def\Sym{\mathrm{Sym}}
\def\Alt{\mathrm{Alt}}
\def\Gr{\mathrm{Gr\,}}
\def\codim{\mathrm{codim}}
\def\wtMoo{{{\wtM^{(1,1)}}}}
\newtheorem{thmA}{Theorem}
\newtheorem{propA}[thmA]{Proposition}
\newtheorem{corA}[thmA]{Corollary}
\newtheorem{thm}{Theorem}[section]
\newtheorem{lemma}[thm]{Lemma}
\newtheorem{prop}[thm]{Proposition}
\newtheorem*{prop*}{Proposition}
\theoremstyle{definition}
\newtheorem*{definition}{Definition}
\author{Hung Yean Loke}
\address{Department of Mathematics,
National University of Singapore,
Block S17,
10, Lower Kent Ridge Road,
Singapore 119076}
\email{matlhy@nus.edu.sg}
\author{Jiajun Ma}
\address{Department of Mathematics, 
Ben-Gurion University of the Negev,
P.O.B. 653, Be'er Sheva 84105,
Israel}
\email{jiajun@math.bgu.ac.il}
\subjclass[2010]{22E46, 22E47}
\keywords{local theta lifts, nilpotent orbits, associated varieties,
  associated cycles, moment maps}
\begin{document}
\title{Invariants and $K$-spectrums of local theta lifts}

\begin{abstract}
  Let $(G,G')$ be a type I irreducible reductive dual pair in
  $\Sp(W_\bR)$. We assume that $(G,G')$ is in the stable range where
  $G$ is the smaller member. Let $K$ and $K'$ be maximal compact
  subgroups of $G$ and $G'$ respectively. Let $\fgg = \fkk \oplus
  \fpp$ and $\fgg' = \fkk' \oplus \fpp'$ be the complexified Cartan
  decompositions of the Lie algebras of $G$ and $G'$ respectively.
  Let $\wtK$ and $\wtK'$ be the inverse images of $K$ and $K'$ in the
  metaplectic double cover $\wtSp(W_\bR)$ of $\Sp(W_\bR)$.  Let $\rho$
  be a genuine irreducible \mbox{$(\fgg,\wtK)$-module}. Our first main
  result is that if~$\rho$ is unitarizable, then except for one
  special case, the full local theta lift $\rho' = \Theta(\rho)$ is
  equal to the local theta lift $\theta(\rho)$. Thus excluding the
  special case, the full theta lift~$\rho'$ is an irreducible and
  unitarizable $(\fgg',\wtK')$-module. Our second main result is that
  the associated variety and the associated cycle of~$\rho'$ are the
  theta lifts of the associated variety and the associated cycle of
  the contragredient representation $\rho^*$ respectively. Finally we
  obtain some interesting $(\fgg,\wtK)$-modules whose $\wtK$-spectrums
  are isomorphic to the spaces of global sections of some vector
  bundles on some nilpotent $K_\bC$-orbits in~$\fpp^*$.
\end{abstract}

\maketitle

\section{Introduction}

\subsection{} \label{S11} Let $W_\bR$ be a finite dimensional
symplectic real vector space. Throughout this paper $(G,G')$ will
denote a type I irreducible reductive dual pair in $\Sp(W_\bR)$.  Such
dual pairs are listed in Table~\ref{tab:sstabncp} in
Section~\ref{S21}.

We follow the notation in \cite{Howe89} closely. Let $\wtSp(W_\bR)$ be
the metaplectic double cover of $\Sp(W_\bR)$. For any subgroup $E$ of
$\Sp(W_\bR)$, let $\wtE$ denote its inverse image in $\wtSp(W_\bR)$.
We choose a maximal compact subgroup $\rU$ of $\Sp(W_\bR)$ such that
$K:=G \cap \rU$ and $K':=G' \cap \rU$ are maximal compact subgroups of
$G$ and $G'$ respectively. Hence $\wtK$ and $\wtK'$ are maximal
compact subgroups of $\wtG$ and $\wtG'$ respectively.  The choice of U
determines a unique complex structure on $W_\bR$ with the following
property: there is a positive definite Hermitian form $\langle \, , \,
\rangle$ on the resulting complex vector space $W$ so that the
imaginary part of $\langle \, , \, \rangle$ coincides with the
symplectic form on $W_\bR$, and $\rU$ coincides with the unitary group
attached to $(W, \langle \, , \, \rangle)$.  We choose the oscillator
representation of $\wtSp(W_\bR)$ whose Fock model $\sY$ is realized as
the space $\bC[W]$ of complex polynomials on $W$ with the $\wtrU$
action as described in Appendix~\ref{sec:Fock}.  Let $\varsigma$
denote the minimal $\widetilde{\rU}$-type of $\sY$. It is a one
dimensional representation of $\wtrU$ acting on the space of constant
functions in $\bC[W]$. Let $\varsigma|_{\wtE}$ denote the restriction
of $\varsigma$ to $\wtE$ for any subgroup $E$ of $\rU$.

Let $\fgg = \fkk \oplus \fpp$ denote the complexified Cartan
decomposition of the Lie algebra of $\wtG$ corresponding to the
maximal compact subgroup $\wtK$.  Likewise we define $\fgg' = \fkk'
\oplus \fpp'$ for~$\wtG'$. Let $\rho$ be an irreducible admissible
genuine $(\fgg,\wtK)$-module.  By (2.5) in \cite{Howe89},
\[
\sY / (\cap_{\psi \in \Hom_{\fgg,\wtK}(\sY, \rho)} \ker \psi)
\simeq \rho \otimes \Theta(\rho)
\]
where $\Theta(\rho)$ is a $(\fgg',\wtK')$-module called the {\it full
  (local) theta lift} of~$\rho$. Theorem~2.1 in~\cite{Howe89} states
that if $\Theta(\rho) \neq 0$, then $\Theta(\rho)$ is a
$(\fgg',\wtK')$-module of finite length with an infinitesimal
character and it has a unique irreducible quotient~$\theta(\rho)$
called the {\it (local) theta lift} of $\rho$. Moreover if
$\theta(\rho_1)$ and $\theta(\rho_2)$ are nonzero, then they are
isomorphic if and only if $\rho_1$ and $\rho_2$ are isomorphic.

\medskip

It is a result of Protsak and Przebinda~\cite{PPz} that in the stable
range, $\theta(\rho)$ is nonzero.  This partially generalizes a
previous result of Li \cite{Li1989} which states that if $\rho$ is
irreducible and unitarizable, then $\theta(\rho)$ is nonzero and
unitarizable.

In order to state our first result, we exclude following
special case.
\begin{itemize}
\item[$(\dagger)$] The dual pair $(G,G') = (\Sp(n,\bR),\O(2n,2n))$
  and $\rho$ is the one dimensional genuine representation of
  $\widetilde{\Sp}(n,\bR)$.
\end{itemize}

\begin{thmA} \label{TA} Suppose $(G,G')$ is in the stable
  range where $G$ is the smaller member. Let $\rho$ be an irreducible
  unitarizable genuine $(\fgg,\wtK)$-module. We exclude the
  case~$(\dagger)$ above.  Then
\[
\Theta(\rho) = \theta(\rho)
\]
as $(\fgg',\wtK')$-modules.  In other words, $\Theta(\rho)$ is already
irreducible and unitarizable.
\end{thmA}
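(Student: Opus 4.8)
The plan is to show that $\Theta(\rho)$ is both irreducible and unitarizable. Since $\Theta(\rho)$ has finite length with a unique irreducible quotient $\theta(\rho)$ by Howe's theorem, it suffices to prove that $\Theta(\rho)$ is semisimple (then being indecomposable with a unique irreducible quotient forces irreducibility). The natural tool for this is Li's construction: when $\rho$ is unitarizable, realize $\rho$ on a Hilbert space $\mathcal{H}_\rho$ and form the integral $\int_G (\omega(g)\phi \otimes \overline{\psi})\, \langle \rho(g) v, w\rangle \, dg$ for Schwartz vectors $\phi, \psi$ in the oscillator representation and $v,w \in \mathcal{H}_\rho$; in stable range this converges absolutely (this is precisely Li's theorem, cited in the excerpt) and produces an $\widetilde{G}'$-invariant positive semidefinite Hermitian form on a dense subspace. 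The completion carries a unitary representation of $\widetilde{G}'$ whose underlying $(\fgg',\wtK')$-module contains $\theta(\rho)$ as a quotient; the issue is to identify this Harish-Chandra module with all of $\Theta(\rho)$.

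First I would set up the see-saw / doubling framework: the key computation is to match the $\wtK'$-spectrum (equivalently the full $(\fgg',\wtK')$-module structure on the joint Howe quotient) of $\Theta(\rho)$ with that of Li's unitary lift. Concretely, by \eqref{eqHowequotient}, $\Theta(\rho) = \Hom_{\fgg,\wtK}(\sY,\rho)$ as a $(\fgg',\wtK')$-module (the "maximal $\rho$-isotypic quotient"), and I would show the Hermitian form constructed above is nondegenerate on \emph{all} of this space, not merely on a submodule — so that the form's radical is zero and $\Theta(\rho)$ embeds into a unitary (hence completely reducible) Harish-Chandra module. The point where stable range is essential is twofold: (i) it guarantees convergence of Li's integral on all Schwartz vectors, and (ii) it guarantees, via the doubled pair $(G\times G, G')$ inside a larger symplectic group and the associated see-saw, that the relevant matrix-coefficient integrals compute the inner product faithfully — there is no kernel coming from the "boundary." The excluded case $(\dagger)$ is exactly the degenerate situation where the lift of the one-dimensional representation to $\O(2n,2n)$ fails to be irreducible (the trivial-type lift picks up an extra constituent), so it must be removed.

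The main obstacle I expect is showing the Hermitian form is nondegenerate on the entire Howe quotient rather than on a proper submodule — i.e., ruling out that $\Theta(\rho)$ has a nonzero maximal submodule on which Li's form vanishes. Equivalently, one must show $\Theta(\rho)$ has no proper submodule, or that Li's unitary lift accounts for the whole quotient. I would attack this by a dimension/multiplicity count: compare the $\wtK'$-types of $\Theta(\rho)$, which can be read off from the $\wtK \times \wtK'$-decomposition of the Fock model $\sY = \bC[W]$ together with the known $\wtK$-types of $\rho$, against the $\wtK'$-types of the Hilbert-space lift, which by an abstract completeness argument (the unitary lift contains $\theta(\rho)$, and conversely its Harish-Chandra module is a \emph{quotient} of $\Theta(\rho)$ by the radical) must coincide. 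The cleanest route is probably to invoke the local-theta-lift compatibility with associated varieties — deferred to the paper's second main theorem or to known stable-range results — to show $\Theta(\rho)$ and $\theta(\rho)$ have the same Gelfand–Kirillov dimension and leading term, which combined with $\theta(\rho)$ being both a quotient and (via Li's form) a submodule forces $\Theta(\rho) = \theta(\rho)$.

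Finally I would assemble the pieces: Li's integral gives a $(\fgg',\wtK')$-invariant positive semidefinite form $\langle\,,\,\rangle_{\mathrm{Li}}$ on $\Theta(\rho)$; its radical $R$ is a submodule, and the quotient $\Theta(\rho)/R$ is a nonzero unitarizable Harish-Chandra module admitting $\theta(\rho)$ as a quotient, hence (being semisimple) $\theta(\rho)$ is also a submodule of $\Theta(\rho)/R$. Using the finite-length and unique-irreducible-quotient properties from Howe's Theorem 2.1, together with the $\wtK$-multiplicity matching above, I conclude $R = 0$ and $\Theta(\rho)/R = \Theta(\rho) = \theta(\rho)$, with the Li form exhibiting unitarity. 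The case $(\dagger)$ is handled separately by direct inspection, where $\Theta(\rho)$ is genuinely reducible.
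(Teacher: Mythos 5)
Your overall skeleton is the same as the paper's: realize $\theta(\rho)$ as Li's quotient $H=(V_{\rho^*}\otimes\sY)/\Rad$, observe that $H$ is a quotient of $\Theta(\rho)\cong(V_{\rho^*}\otimes\sY)_{\fgg,K}$, and then try to force the radical to be zero by matching $\wtK'$-multiplicities. You also correctly locate where stable range enters (convergence of Li's integral, and the see-saw computation of the $\wtK'$-types of $\Theta(\rho)$, which is the paper's Proposition \ref{P4}). The problem is that the one step you flag as ``the main obstacle'' is exactly the step you never actually supply, and the substitutes you propose do not work. Knowing that $\Theta(\rho)$ and $\theta(\rho)$ have the same Gelfand--Kirillov dimension and leading term cannot yield $\Theta(\rho)=\theta(\rho)$: the associated variety/cycle results (Theorems \ref{TC}, \ref{TD}) are statements about $\Theta(\rho)$ itself, and in any case equality of leading terms leaves room for extra constituents of strictly smaller GK dimension sitting inside the radical. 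Likewise, ``$\theta(\rho)$ is both a quotient and a submodule'' does not force equality: a nonsplit extension $0\to X\to\Theta(\rho)\to\theta(\rho)\to 0$ (even a self-extension of $\theta(\rho)$) has $\theta(\rho)$ as its unique irreducible quotient and can still contain it as a submodule, and nothing in your argument rules out $\Rad\neq 0$. The ``abstract completeness argument'' you invoke for the coincidence of $\wtK'$-spectra is precisely the assertion to be proved, so the reasoning is circular at the decisive point.

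What the paper actually does at this point is concrete and analytic, and it is the content you are missing. For each $\wtK'$-type $\sigma'$, the Fock space decomposes as $\sY=\bigoplus_{\sigma'}L(\sigma)\otimes\sigma'$ with $L(\sigma)$ a unitarizable lowest weight module of $\wtM$ which, in stable range, is a \emph{full generalized Verma module} $\cU(\fgg)\otimes_{\cU(\fkk)}\sigma$ over $\fgg$; this gives the multiplicity formula $m_{\sigma'}=\dim\Hom_{\wtK}(\sigma,\rho)$ for $\Theta(\rho)$ and the a priori bound $\dim H(\sigma')\le m_{\sigma'}$. Equality is then proved by showing that the $\sigma'$-component of Li's form does not vanish on the relevant $K$-invariant vectors $\bfdd$: one realizes $L(\sigma)$ holomorphically on the bounded domain $\Omega_0\simeq G/K$ (elements are polynomials $p$ times the lowest $\wtK$-type), rewrites the matrix-coefficient integral as $\int_{\Omega_0}p(x)f(x)\,dx$ with $f$ continuous and $f(0)\neq 0$, uses Li's absolute convergence (this is where case $(\dagger)$ is excluded, not merely a ``degenerate lift of the trivial representation''), and concludes non-vanishing for some $p$ by Stone--Weierstrass plus dominated convergence. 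Without this (or some equivalent non-degeneracy argument on each isotypic piece), your proof does not close; everything after ``the main obstacle'' in your write-up is assertion rather than proof.
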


The proof is given in Section \ref{S23a}. 

In Case $(\dagger)$, $\Theta(\rho)$ is reducible by 
  Lee's appendix in \cite{Lo}.

  The above theorem is useful because invariants attached to
  $\Theta(\rho)$ are usually easier to describe than that of
  $\theta(\rho)$. For example, we could deduce a formula for the
  $\wtK'$-types of~$\Theta(\rho)$ in Proposition \ref{P4}.

\subsection{} \label{S12} 
Before stating other results, we briefly review the definitions of
some invariants of Harish-Chandra modules. See Section~2 in Vogan
\cite{Vo89} for details.

Let $(\sfg,\sfK)$ denote the Harish-Chandra pair of a real reductive
group $\sfG$. Let $\sfg = \sfk \oplus \sfp$ denote the complexified
Cartan decomposition the Lie algebra of $\sfG$ corresponding to the
maximal compact subgroup $\sfK$. Let $(\varrho, V_{\varrho})$ be a
$(\sfg,\sfK)$-module of finite length and let $0 \subset F_0 \subset
\cdots\subset F_j \subset F_{j+1} \subset \cdots$ be a good filtration
of $\varrho$, i.e. $\dim F_j$ is finite, $\cup_{j \in \bN} F_j =
V_\varrho$ and $\cU_p(\sfg) F_q = F_{p+q}$ for all $q$ sufficiently
large and for all $p > 0$.  Then $\Gr\varrho = \bigoplus
F_{j}/F_{j-1}$ is a finitely generated $(\cS(\sfp),\sfK)$-module where
$\cS(\sfp)$ is the symmetric algebra on $\sfp$.

Let $\sA$ be the associated $\sfK_\bC$-equivariant coherent sheaf of $\Gr
\varrho$ on $\sfp^* = \Spec(\cS(\sfp))$.  The {\it associated variety}
of $\varrho$ is defined to be $\AV(\varrho): = \Supp(\sA)$ in
$\sfp^*$.  Its dimension is called the {\it Gelfand-Kirillov
  dimension} of $\varrho$.  It is a well known fact that
$\AV(\varrho)$ is a closed subset of the null cone of $\sfp^*$.

Let $\AV(\varrho) = \bigcup_{j=1}^r \overline{\cO_j}$ where $\cO_j$
are the distinct open $\sfK_\bC$-orbits in $\AV(\varrho)$.  By
Lemma~2.11 in~\cite{Vo89} (c.f. Proposition \ref{P8}), there is a
finite $(\cS(\sfp),\sfK_\bC)$-invariant filtration $0\subset \sA_0
\subset \cdots \subset \sA_l \subset \cdots \subset \sA_n = \sA$ of
$\sA$ such that $\sA_l/\sA_{l-1}$ is generically reduced on each
$\overline{\cO_j}$. For a closed point $x_j\in \cO_j$, let
$i_{x_j}\colon \{ x_j \} \hookrightarrow \sfp^*$ be the natural
inclusion map and let $\sfK_{x_j}$ be the stabilizer of $x_j$ in
$\sfK_\bC$.  Now
\[
\chi_{x_j}:= \textstyle\bigoplus_l (i_{x_j})^* (\sA_l/\sA_{l-1})
\]
is a nonzero finite dimensional rational representation of $\sfK_{x_j}$.
We call $\chi_{x_j}$ an {\it isotropy representation} of $\varrho$ at
$x_j$. Its image $[\chi_{x_j}]$ in the Grothendieck group of finite
dimensional rational $\sfK_x$-modules is called the {\it isotropy
  character} of $\varrho$ at $x_j$.  The isotropy representation is
dependent on the filtration but the isotropy character is independent
of the filtration.

We call $\Set{(\cO_j, x_j, \chi_{x_j}) : j=1, \ldots, r}$ the set of
{\it orbit data attached to the filtration} $\set{ \sA_j }$. On the
other hand, $\Set{(\cO_j,x_j, [\chi_{x_j}]) : j=1, \ldots, r}$ is
independent of the filtration and we call it the set of {\it orbit
  data attached to $\varrho$}.  Two orbit data are equivalent if they
are conjugate to each other by the $\sfK_\bC$-action.  We define the
{\it multiplicity of $\varrho$ along $\cO_j$} to be $m(\cO_j,\varrho)
= \dim_\bC\chi_{x_j}$ and the {\it associated cycle} of $\varrho$ to
be $\AC(\varrho) = \sum_{j=1}^r m(\cO_j,\varrho) [\overline{\cO_j}]$.

In summary, the associated variety, the associated cycle and isotropy
character(s) are invariants of $\varrho$, i.e. they are independent of
the choices of filtrations.  

Suppose $\sfG$ is a member group of a type I reductive dual pair in
$\wtSp(W_\bR)$. Then by~\cite{Adams}, \cite{MVW}, \cite{LST} and
\cite{Sun}, the above invariants of $\varrho$ and of its
contragredient $\varrho^*$ are related by an automorphism $C$ of
$\sfG$. We call $C$ a {\it dualizing automorphism}.  We will review
these in Appendix~\ref{sec:dual}. 

\subsection{} Now we describe a result about the associated variety
of $\Theta(\rho)$.

Let $\fgg = \fkk \oplus \fpp$ and $\fgg' = \fkk' \oplus \fpp'$ as in
Section \ref{S11}. In Appendix~\ref{SA1} (also see \cite{DKP}), we
recall the definitions of the two moment maps
\begin{equation} \label{eq2}
\xymatrix{
\fpp^* & \ar[l]_{\phi} W \ar[r]^{\phi'} & \fpp'^*.}
\end{equation}
The maps $\phi$ and $\phi'$ are given explicitly in Table
\ref{tab:noncpt1}. For a $K_\bC$-invariant closed subset~$S$
of~$\frakp^*$, we define {\it the theta lift of $S$} to be $\theta(S)
= \phi'(\phi^{-1} (S))$, which is a $K'_\bC$-invariant closed subset
of $\fpp'^*$.  Let $N(\fpp^*) := \Set{x \in \fpp^*|0\in
  \overline{K_\bC \cdot x}}$ be the nilpotent cone in $\fpp^*$.  Let
$\frakN_{K_\bC}(\fpp^*)$ be the set of nilpotent $K_\bC$-orbits
in~$\fpp^*$.  We define $\nullpp$ and $\frakN_{K'_\bC}(\fpp'^*)$ in
the same way. It is well known that $\theta(S) \subseteq N(\fpp'^*)$
if $S\subseteq N(\fpp^*)$.

\medskip

Since $\Theta(\rho)$ has finite length, the associated variety
$\AV(\Theta(\rho))$ of $\Theta(\rho)$ is a closed subvariety of $\nullpp$.

\begin{thmA} \label{TB} For any real reductive dual pair
    $(G,G')$ (not necessary in the stable range) and any
    irreducible admissible genuine $(\fgg,\wtK)$-module, there is an
    upper bound of the associated variety of $\Theta(\rho)$ given by
    $\theta(\AV(\rho^*))$. In other words, we have
\[
\AV(\Theta(\rho)) \subseteq \theta(\AV(\rho^*))).
\]
\end{thmA}

The proof is given in Section \ref{S33}.

The above theorem is a correction to Proposition 3.12 in Nishiyama-Zhu
\cite{NZ}.

\subsection{}
We now assume $(G, G')$ is in the stable range where $G$ is the
smaller member. Given $\cO\in \frakN_{K_\bC}(\fpp^*)$, it is a result
of \cite{Ohta:1991C}, \cite{Daszkiewicz:2005} and \cite{NOZ1} that
there is a unique $\cO'\in \frakN_{K'_\bC}(\fpp'^*)$ such that
$\overline{\cO'} = \theta(\bcO)$.  We call $\cO'$ the {\it theta lift
  of $\cO$} and we write $\cO' = \theta(\cO)$.  Moreover,
\[
\begin{split}
\theta \colon \frakN_{K_\bC}(\fpp^*) &\to \frakN_{K'_\bC}(\fpp'^*)\\
\cO &\mapsto \cO'
\end{split}
\]
is an injective map preserving the closure relations,
i.e. $\theta(\cO_2) \subset \overline{\theta(\cO_1)}$ if $\cO_2
\subset \overline{\cO_1}$.

\begin{definition}
  We define the following notion of theta lifts of objects in the
  stable range.
\begin{enumerate}[(1)]
\item Let $c=\sum_j m_j [\overline{\cO_j}]$ be a formal sum of
  closures of nilpotent orbits. We define the {\it theta lift of the
    cycle $c$} to be $\theta(c) := \sum_j m_j
  [\overline{\theta(\cO_j)}]$.

\item Let $(\cO, x, \chi_x)$ be an orbit datum where $\cO\in
  \frakN_{K_\bC}(\fpp^*)$, $x\in \cO$ and $\chi_x$ is a
  finite-dimensional rational $\wtK_x$-module where $\wtK_x$ is the
  stabilizer of $x$ in $\wtK_\bC$.  Let $\cO' = \theta(\cO)$.  Fixing
  points $x\in \cO$, $w\in W$, $x'\in \cO'$ such that $\phi(w) = x$
  and $\phi'(w) = x'$, we will define a group homomorphism $\alpha
  \colon K'_{x'} \to K_x$ in Proposition \ref{prop:alpha}.  We define
  the {\it theta lift of the orbit datum} $(\cO, x, \chi_x)$ to be
  $(\cO', x', \chi_{x'})$ where
  \[
  \chi_{x'} : =  \varsigma|_{\wtK'_{x'}} \otimes (\varsigma|_{\wtK_x}\otimes
  \chi_x)\circ \alpha.
  \]
  We write $\theta(\cO, x, \chi_x) = (\cO', x', \chi_{x'})$ which is
  well defined up to $\wtK_\bC$-conjugation.  Similarly we define the
  theta lift of $(\cO, x, [\chi_x])$ to be $\theta(\cO, x, [\chi_x])
  := (\cO', x', [\chi_{x'}])$.
\end{enumerate}
\end{definition}

\begin{thmA} \label{TC} Suppose $(G,G')$ is in the stable range where
  $G$ is the smaller member.  Let~$\rho$ be a genuine irreducible
  $(\fgg,\wtK)$-module. Suppose $\Set{(\cO_j, x_j, [\chi_{x_j}]) : j =
    1, \ldots, r }$ is the set of orbit data attached to $\rho^*$.
  Then $\Set{ \theta(\cO_j,x_j, [\chi_{x_j}]) : j = 1, \ldots, r }$ is
  the set of orbit data attached to $\Theta(\rho)$.
\end{thmA}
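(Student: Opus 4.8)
The plan is to upgrade Theorem~\ref{TB} from an upper bound on the associated variety to an exact statement at the level of orbit data, by combining a See-saw argument (which controls $\wtK'$-types, hence $\Gr$ of the lift) with the explicit description of the moment maps $\phi,\phi'$ in Table~\ref{tab:noncpt1} and the nilpotent orbit correspondence of Theorem~\ref{thm:TLorb}.

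\medskip

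First I would reduce everything to the question of computing, for each open orbit $\cO_j'=\theta(\cO_j)$ in $\AV(\Theta(\rho))$, the isotropy character $[\chi_{x_j'}]$ of $\Theta(\rho)$ at a point $x_j'\in\cO_j'$. By Theorem~\ref{TB} (applied to both $(G,G')$ and the reverse pair, using the injectivity and order-preserving property of $\theta$ in Theorem~\ref{thm:TLorb}, plus the fact that in stable range $\dim\AV(\Theta(\rho))$ is forced to equal $\dim\theta(\AV(\rho^*))$ orbit-by-orbit), the maximal orbits in $\AV(\Theta(\rho))$ are exactly the $\theta(\cO_j)$. So the content is purely the multiplicity/isotropy computation along each top orbit, which is a local statement near a single point $x_j'$.

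\medskip

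The heart of the argument is to realize $\Gr\Theta(\rho)$, as a $K'_\bC$-equivariant sheaf on $\fpp'^*$, in terms of $\Gr\rho^*$ via the oscillator representation. Taking a good filtration on $\sY=\bC[W]$ compatible with the filtration defining $\Theta(\rho)$ in \eqref{eqHowequotient}, one gets $\Gr(\rho\otimes\Theta(\rho))$ as a subquotient of $\bC[W]$ as a $(\cS(\fpp)\otimes\cS(\fpp'),K_\bC\times K'_\bC)$-module, and the support is governed by the image of the double moment map $(\phi,\phi')\colon W\to\fpp^*\times\fpp'^*$. Restricting to a generic point $(x_j,x_j')$ in the top piece, the fiber $\phi^{-1}(x_j)\cap\phi'^{-1}(x_j')$ is a single $(K_{x_j}\cap K'_{x_j'})$-orbit of some $w\in W$ (this is where the stable-range hypothesis and the explicit form of the moment maps are essential — one needs the fiber to be a homogeneous space so that localization is clean), and the group homomorphism $\alpha\colon K'_{x_j'}\to K_{x_j}$ of \eqref{eq:salpha} is precisely the comparison of stabilizers through $w$. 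Localizing the sheaf identity at $x_j'$ then yields
\[
\chi_{x_j'} \;\cong\; \varsigma|_{\wtK'_{x_j'}}\otimes\bigl(\varsigma|_{\wtK_{x_j}}\otimes\chi_{x_j}\bigr)\circ\alpha,
\]
where the two copies of $\varsigma$ enter because $\bC[W]$ contributes the constant term $\varsigma$ at $w$ on each side, and $\chi_{x_j}$ is the isotropy representation of $\rho^*$ (the contragredient appears for the same reason it does in Theorem~\ref{TB}, namely the tensor factor carrying $\rho$ in \eqref{eqHowequotient} pairs with $\Theta(\rho)$). This matches the definition of $\theta(\cO_j,x_j,[\chi_{x_j}])$.

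\medskip

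The main obstacle I anticipate is the base-change/localization step: passing from the support statement of Theorem~\ref{TB} to an isomorphism of the localized sheaves requires that the filtration chosen on $\sY$ be simultaneously good for $\rho$, for $\Theta(\rho)$, and for the tensor product, and that $\Gr$ commutes with the quotient in \eqref{eqHowequotient} — i.e. that there is no loss from lower-dimensional pieces contributing to the generic stalk along $\cO_j'$. Establishing this cleanly probably requires the flatness of $\bC[W]$ over $\cS(\fpp)$ (or a Cohen--Macaulayness statement for the relevant orbit closures) along the top stratum, together with the observation from Theorem~\ref{TB} and its converse that nothing of higher GK-dimension can intervene. Once the generic fiber is identified as a single orbit of $w$ with stabilizer comparison $\alpha$, the rest is the bookkeeping of how $\varsigma$ restricts, which is routine given Table~\ref{tab:noncpt1}.
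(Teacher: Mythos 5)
Your plan follows, in essence, the paper's own route: in stable range the see-saw $K$-type matching identifies $\varsigma|_{\wtK'}^{-1}\Gr\Theta(\rho)$ with $\bigl(\varsigma|_{\wtK}\Gr\rho^*\otimes_{\cS(\fpp)}\bC[W]\bigr)^{K_\bC}$ (Lemma~\ref{L3}); a Vogan-type filtration of $\Gr\rho^*$ by modules supported on orbit closures (Proposition~\ref{P8}) is transported through this functor using flatness of $\phi$ (Theorem~\ref{thm:Kos}) and exactness of $K_\bC$-invariants; and the stalk at $x'_j$ is computed from the fact that the fiber of $\phi'$ over $x'_j$ inside $\phi^{-1}(\overline{\cO_j})$ is a single $K_\bC\times K'_{x'_j}$-orbit with stabilizer $K_{x_j}\times_\alpha K'_{x'_j}$, whence $(\Ind\chi)^{K_\bC}\cong\chi\circ\alpha$ and the two $\varsigma$-twists (Lemma~\ref{lem:isotropy1}, Corollary~\ref{cor:alpha}). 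So the heart of your sketch is the correct mechanism, and the obstacles you flag (compatibility of filtrations, flatness, exactness) are exactly the ones the paper resolves. A minor imprecision: the relevant transitivity statement is for the group $K_\bC\times K'_{x'}$ acting on $\phi'^{-1}(x')\cap\phi^{-1}(\overline{\cO})$, not for an ``intersection'' $K_{x_j}\cap K'_{x'_j}$, and the paper localizes only at $x'$, not at the pair $(x_j,x'_j)$.

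The genuine gap is in your opening reduction, where you claim the maximal orbits of $\AV(\Theta(\rho))$ are exactly the $\theta(\cO_j)$ ``by Theorem~\ref{TB} applied to both $(G,G')$ and the reverse pair \dots plus the fact that the dimensions are forced to agree orbit-by-orbit.'' The reverse-pair appeal fails: in the opposite direction the pair is not in stable range and $\rho$ is not known to be the full lift of $\Theta(\rho)$, so Theorem~\ref{TB} gives no information there; and injectivity plus order preservation of $\theta$ (Theorem~\ref{thm:TLorb}) only go one way, so they do not exclude $\theta(\cO_j)\subsetneq\overline{\theta(\cO_i)}$ for two distinct maximal orbits $\cO_i,\cO_j$ of $\AV(\rho^*)$, which would destroy the conclusion that the lifted data are \emph{the} orbit data attached to $\Theta(\rho)$. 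What is actually needed, and what the paper proves as Lemma~\ref{lem:cpxorb}, is that all $\theta(\cO_j)$ are equidimensional: by Vogan's Theorem~8.4 all open orbits of $\AV(\rho^*)$ of the irreducible module $\rho^*$ generate the same complex nilpotent orbit, hence share the same underlying Young diagram, and by Ohta's description the stable-range lift adds a fixed column, so the $\theta(\cO_j)$ all have dimension $\tfrac12\dim\cO'_\bC$. With this in hand, the correct logic is: Theorem~\ref{TB} gives $\AV(\Theta(\rho))\subseteq\bigcup_j\overline{\theta(\cO_j)}$, and the reverse inclusion comes not from any reverse-direction lifting but from the nonvanishing of the transported isotropy modules, $\dim i_{x'_j}^*\sB=\dim i_{x_j}^*\sA\neq 0$ --- i.e.\ your own local computation, once made precise, supplies the lower bound; the equidimensionality then guarantees the $\theta(\cO_j)$ are precisely the open orbits, so the data you compute are indeed the orbit data attached to $\Theta(\rho)$.
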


The next theorem is a corollary of Theorem \ref{TC}. 

\begin{thmA} \label{TD} Suppose $(G,G')$ is in the stable
  range where $G$ is the smaller member. Then
\[
\AV(\Theta(\rho)) = \theta(\AV(\rho^*))
\mbox{   and  } 
\AC(\Theta(\rho)) = \theta(\AC(\rho^*)).
\]
In particular if $\rho$ is unitarizable and excluding $(\dagger)$,
then $\AV(\theta(\rho)) = \theta(\AV(\rho^*))$ and $\AC(\theta(\rho))
= \theta(\AC(\rho^*))$ by Theorem \ref{TA}.
\end{thmA}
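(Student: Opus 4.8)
The plan is to deduce Theorem~\ref{TD} directly from Theorem~\ref{TC} together with the definitions of $\AV$, $\AC$ and their theta lifts. First I would recall that, by definition, the associated variety $\AV(\varrho)$ of a finite length $(\fgg,\wtK)$-module is the union of the closures $\overline{\cO_j}$ of the open $K_\bC$-orbits $\cO_j$ appearing in its orbit data, and the associated cycle is $\AC(\varrho) = \sum_j m(\cO_j,\varrho)[\overline{\cO_j}]$ with $m(\cO_j,\varrho) = \dim_\bC \chi_{x_j}$. So if $\Set{(\cO_j, x_j, [\chi_{x_j}]) : j = 1,\ldots,r}$ is the orbit data of $\rho^*$, then $\AV(\rho^*) = \bigcup_j \overline{\cO_j}$ and $\AC(\rho^*) = \sum_j (\dim_\bC \chi_{x_j})[\overline{\cO_j}]$.

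Next I would invoke Theorem~\ref{TC}: $\Theta(\rho)$ has orbit data $\Set{\theta(\cO_j, x_j, [\chi_{x_j}]) : j = 1,\ldots,r} = \Set{(\cO_j', x_j', [\chi_{x_j'}]) : j = 1,\ldots,r}$, where $\cO_j' = \theta(\cO_j)$ and $\chi_{x_j'} = \varsigma|_{\wtK'_{x_j'}} \otimes (\varsigma|_{\wtK_{x_j}} \otimes \chi_{x_j}) \circ \alpha$. For the associated variety, this immediately gives $\AV(\Theta(\rho)) = \bigcup_j \overline{\cO_j'} = \bigcup_j \overline{\theta(\cO_j)}$, which by the definition of the theta lift of a set (and the fact, from Theorem~\ref{thm:TLorb}, that $\overline{\theta(\cO_j)} = \theta(\overline{\cO_j})$ for nilpotent orbits in stable range) equals $\theta\bigl(\bigcup_j \overline{\cO_j}\bigr) = \theta(\AV(\rho^*))$. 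Here one should note that $\theta$ applied to a union is the union of the images, since $\theta(S) = \phi'(\phi^{-1}S)$ commutes with unions; one only needs that the $\theta(\cO_j)$ are precisely the open orbits in $\theta(\AV(\rho^*))$, which follows because $\theta$ preserves the closure order on nilpotent orbits (Theorem~\ref{thm:TLorb}), so a maximal $\cO_j$ maps to a maximal $\cO_j'$.

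For the associated cycle, I would compute the multiplicities: $m(\cO_j', \Theta(\rho)) = \dim_\bC \chi_{x_j'}$. Since $\varsigma$ restricts to a \emph{one-dimensional} character of any subgroup of $\widetilde{\rU}$, both $\varsigma|_{\wtK'_{x_j'}}$ and $\varsigma|_{\wtK_{x_j}}$ are one-dimensional, and precomposition with the group homomorphism $\alpha$ does not change the dimension; hence $\dim_\bC \chi_{x_j'} = \dim_\bC \chi_{x_j}$. Therefore $m(\cO_j', \Theta(\rho)) = m(\cO_j, \rho^*)$, and $\AC(\Theta(\rho)) = \sum_j m(\cO_j,\rho^*)[\overline{\cO_j'}] = \sum_j m(\cO_j,\rho^*)[\overline{\theta(\cO_j)}] = \theta(\AC(\rho^*))$ by the definition of the theta lift of a cycle. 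The final assertion for unitarizable $\rho$ is then immediate from Theorem~\ref{TA}, which identifies $\Theta(\rho) = \theta(\rho)$.

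The only genuinely delicate point — and the one I would treat most carefully — is the identification $\AV(\Theta(\rho)) = \bigcup_j \overline{\theta(\cO_j)} = \theta(\AV(\rho^*))$ as \emph{sets}, i.e. checking that taking closures of the theta-lifted open orbits really reconstructs $\theta$ of the whole closed set rather than just a piece of it; this requires the order-preserving and injectivity properties of $\theta$ on $\frakN_{K_\bC}(\fpp^*)$ from Theorem~\ref{thm:TLorb}, together with the commutation of $\theta(\cdot) = \phi'(\phi^{-1}(\cdot))$ with unions and the compatibility $\theta(\overline{\cO}) = \overline{\theta(\cO)}$. Everything else is a direct bookkeeping consequence of Theorem~\ref{TC} and the one-dimensionality of $\varsigma$.
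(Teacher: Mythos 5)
Your deduction of Theorem \ref{TD} from Theorem \ref{TC} is correct and is essentially the paper's own route: in Section \ref{S44b} the two theorems are proved together, and once the orbit data of $\Theta(\rho)$ is identified as $\Set{\theta(\cO_j,x_j,[\chi_{x_j}])}$, the paper obtains the $\AV$ and $\AC$ identities by exactly your bookkeeping — $\overline{\theta(\cO)}=\theta(\overline{\cO})$, compatibility of $\theta(S)=\phi'(\phi^{-1}S)$ with unions, and $\dim\chi_{x'_j}=\dim\chi_{x_j}$ because the twisting characters $\varsigma|_{\wtK'_{x'_j}}$, $\varsigma|_{\wtK_{x_j}}$ are one-dimensional and precomposition with $\alpha$ preserves dimension — with the unitarizable case settled by Theorem \ref{TA}.

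One caveat about your ``delicate point.'' Once Theorem \ref{TC} is granted, the fact that the $\theta(\cO_j)$ are precisely the open orbits of $\AV(\Theta(\rho))$ is built into the definition of ``set of orbit data attached to $\Theta(\rho)$,'' so no extra verification is needed for the deduction. However, the independent justification you sketch — that injectivity plus order-preservation of $\theta$ on $\frakN_{K_\bC}(\fpp^*)$ forces maximal orbits to lift to maximal orbits — is not a valid argument: an injective order-preserving map need not reflect the order, so a priori one $\theta(\cO_j)$ could sit inside the closure of another. In the paper this maximality statement is Lemma \ref{lem:cpxorb}, proved inside the proof of Theorem \ref{TC} by a dimension count: by Vogan's Theorem 8.4 all open orbits of $\AV(\rho^*)$ for the irreducible module $\rho^*$ lie in a single complex nilpotent orbit and hence have equal dimension, and by Ohta's signed-Young-diagram description of the lift (adding one column) the orbits $\theta(\cO_j)$ again all have equal dimension, hence are all maximal in $\theta(\AV(\rho^*))$. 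If you ever need that fact outside the black box of Theorem \ref{TC}, that is the argument to use, not order-preservation.
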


The proofs of Theorems \ref{TC} and \ref{TD} are given in Section
\ref{S44b}.  In these two theorems, we do not require that $\rho^*$ is
unitarizable. We will show in the proof of Lemma \ref{lem:cpxorb} that
the dimension of every $\theta(\overline{\cO_j})$ is equal to $\dim
\AV(\Theta(\rho))$, i.e. the Gelfand-Kirillov dimension
of~$\Theta(\rho)$. However there are examples where $\Theta(\rho)$ is
reducible and $\theta(\rho)$ has smaller Gelfand-Kirillov dimension
than that of $\Theta(\rho)$. In particular $\AV(\theta(\rho))$ does
not contain any~$\theta(\cO_j)$.

\smallskip

Theorem \ref{TD} overlaps with the previous work of \cite{NOT} and
\cite{Ya} where $G$ is a compact group. It also extends the work
\cite{NZ} where $\rho$ is a unitarizable lowest weight module.

\smallskip

We would like to relate a recent result of \cite{GZ} where Gomez and
Zhu show that the dimensions of the generalized Whittaker functionals
of the Casselman-Wallach globalizations of $\rho$ and $\Theta(\rho)$
are the same. It is a famous result of \cite{MW} that in the $p$-adic
case, the dimension of a space of generalized Whittaker functionals of
an algebraic irreducible representation is equal to the corresponding
multiplicity in its wavefront cycle. Theorem~\ref{TD} together with
\cite{GZ} could be interpreted as an evidence for the corresponding
phenomenon for real classical groups.

\subsection{}
Let $(\sfg, \sfK)$ and $\sfG$ as in Section \ref{S12}. For a
$(\sfg,\sfK)$-module $\varrho$ of finite length, we define
$\VC(\varrho)$ to be the complex variety cut out by the ideal
$\Gr(\Ann_{\cU(\sfg)} \varrho)$ in $\sfg^*$, where
$\Gr(\Ann_{\cU(\sfg)} \varrho)$ is the graded ideal of
$\Ann_{\cU(\fgg)} \rho$ in $\Gr \cU(\sfg) = \bC[\sfg^*]$.
Alternatively $\VC(\varrho)$ is the associated variety of the $(\sfg
\oplus \sfg, \Ad \sfG)$-module $\cU(\sfg)/\Ann_{\cU(\sfg)}
\varrho$. It is an $(\Ad^* \sfG)_\bC$-invariant complex variety
in $\fgg^*$ whose dimension is equal to $2 \dim \AV(\varrho)$.  By
Proposition~\ref{P24}, $\VC(\varrho^*) = \VC(\varrho)$.

\smallskip

We recall that $(G,G')$ is a type I irreducible dual pair in the stable range where $G$ is the smaller member. The actions of
$G$ and $G'$ on the symplectic manifold $W_\bR$ give two moment maps
(see \cite{DKP})
\begin{equation} \label{eq2a}
 \xymatrix{ \fgg^* & \ar[l]_{\phi_G \ \
    } W_\bR \otimes_\bR \bC \ar[r]^{\ \ \phi_{G'}} & \fgg'^*.}
\end{equation}
For an $\Ad^* G_\bC$-invariant complex subvariety $S$ of $\fgg^*$, we
define $\theta_\bC(S) = \phi_{G'}(\phi_G^{-1}(S))$. This is an $\Ad^*
G_\bC'$-invariant complex subvariety of $\fgg'^*$. We state a
corollary of Theorem~\ref{TD}.

\begin{corA} \label{CE} Suppose $(G,G')$ is in the stable range where
  $G$ is the smaller member. Let~$\rho$ be a genuine irreducible
  $(\fgg,\wtK)$-module. Then
\[
\VC(\Theta(\rho)) = \theta_\bC(\VC(\rho)).
\]
\end{corA}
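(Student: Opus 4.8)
The plan is to deduce Corollary \ref{CE} from Theorem \ref{TD} by relating the complex-analytic associated variety $\VC$ in $\fgg^*$ to the $K_\bC$-associated variety $\AV$ in $\fpp^*$, and then matching up the two pairs of moment maps \eqref{eq2} and \eqref{eq2a}. First I would recall the classical result (Kostant--Rallis, Vogan, Barbasch--Vogan) that for a $(\fgg,K)$-module $\varrho$ of finite length one has $\VC(\varrho) = G_\bC \cdot \AV(\varrho)$, the $G_\bC$-saturation inside $\fgg^*$ of the $K_\bC$-stable variety $\AV(\varrho) \subseteq \fpp^*$; equivalently $\VC(\varrho) \cap \fpp^* = \bigcup_{k\in K_\bC} k\cdot\AV(\varrho)$ up to the identification $\fgg^* = \fkk^* \oplus \fpp^*$, and $\dim\VC = 2\dim\AV$ as already noted in the excerpt. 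Applying this to $\varrho = \Theta(\rho)$ and using Theorem \ref{TD}, $\VC(\Theta(\rho)) = G'_\bC\cdot\theta(\AV(\rho^*))$. So the corollary reduces to the purely geometric identity
\[
G'_\bC\cdot\theta(S) = \theta_\bC\bigl(G_\bC\cdot S\bigr)
\]
for every $K_\bC$-stable closed subset $S\subseteq \nullp \subseteq \fpp^*$, where $\theta(S) = \phi'(\phi^{-1}S)$ and $\theta_\bC(T) = \phi_{G'}(\phi_G^{-1}T)$.

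The second and main step is to prove this geometric identity by comparing the two diagrams of moment maps. The point is that $W_\bR\otimes_\bR\bC$ carries a $K_\bC\times K'_\bC$ action and (from the explicit formulas, cf. Table \ref{tab:noncpt1} and the Cartan decompositions) the restriction of the ``big'' moment map $\phi_G\colon W_\bR\otimes_\bR\bC \to \fgg^*$ followed by projection to $\fpp^*$ is, on a suitable $K_\bC\times K'_\bC$-stable piece of $W_\bR\otimes_\bR\bC$, exactly the ``small'' moment map $\phi\colon W\to\fpp^*$ of \eqref{eq2}, and similarly for the primed side; the complementary directions of $W_\bR\otimes_\bR\bC$ map into $\fkk^*$ and are precisely what the $G_\bC$-saturation sweeps out. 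Concretely I would show: (i) $\phi_G^{-1}(G_\bC\cdot S) = G_\bC\cdot \phi^{-1}(S)$ inside $W_\bR\otimes_\bR\bC$ (using that $\phi_G$ is $G_\bC$-equivariant and that over the nilpotent cone $\phi_G^{-1}(\fpp^*\text{-part})$ is the closure of the $\rU$-complexification of $\phi^{-1}$); (ii) $\phi_{G'}(G_\bC\cdot \phi^{-1}(S)) = G'_\bC\cdot \phi'(\phi^{-1}(S)) = G'_\bC\cdot\theta(S)$, since $G_\bC$ and $G'_\bC$ commute in $\Sp(W_\bR\otimes_\bR\bC)$ so $\phi_{G'}$ is $G_\bC$-invariant and $G'_\bC$-equivariant. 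Combining (i) and (ii) gives $\theta_\bC(G_\bC\cdot S) = G'_\bC\cdot\theta(S)$ as desired. The stable-range hypothesis enters to guarantee (via Theorem \ref{thm:TLorb}) that all the varieties involved stay in the respective null cones and that $\theta(S)$ is genuinely closed, so there are no issues with closures of images.

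I expect the main obstacle to be step (i): carefully justifying that saturating $S$ by $G_\bC$ downstairs corresponds exactly to saturating $\phi^{-1}(S)$ by $G_\bC$ upstairs, i.e.\ that $\phi_G^{-1}(G_\bC\cdot S)$ has no extra components beyond $G_\bC\cdot\phi^{-1}(S)$. This is where one must use that $S$ lies in the null cone $\nullp$ together with the precise relation between the complexified Cartan involution on $\fgg$ and the associated grading of $W_\bR\otimes_\bR\bC$; the cleanest route is probably to work fiberwise over a closed orbit $\cO\subseteq S$ and use a Kostant--Sekiguchi-type matching of $K_\bC$-orbits on $\fpp^*$ with $G_\bC$-orbits on $\fgg^*$, noting that $\theta$ and $\theta_\bC$ are compatible with this matching because the moment maps are. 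Once the geometry is pinned down the rest is formal. An alternative, slightly softer, argument avoiding explicit coordinates: use that both sides of the claimed identity are $G'_\bC$-stable closed subvarieties of $\fgg'^*$ of the same dimension (namely $2\dim\AV(\Theta(\rho))$, by the remark after Theorem \ref{TD} together with $\dim\VC = 2\dim\AV$), are built from the same finite set of nilpotent orbits, and agree generically on each component because the isotropy/orbit data match by Theorem \ref{TC}; hence they coincide as varieties.
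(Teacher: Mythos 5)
Your reduction in the first paragraph is exactly the paper's strategy: write $\VC(\varrho)=\overline{G_\bC\cdot\AV(\varrho)}$ and invoke Theorem~\ref{TD}, so that everything hinges on the compatibility $\theta_\bC\bigl(\overline{G_\bC\cdot \AV(\rho^*)}\bigr)=\overline{G'_\bC\cdot\theta(\AV(\rho^*))}$. (One small point you gloss: $\Theta(\rho)$ need not be irreducible, so the finite-length version of $\VC=\overline{G_\bC\cdot\AV}$ has to be justified by additivity of $\VC$ over subquotients, which the paper spells out; that part is harmless.) The genuine gap is in your proof of the geometric identity itself. Step (i), $\phi_G^{-1}(G_\bC\cdot S)=G_\bC\cdot\phi^{-1}(S)$, presupposes that the diagram \eqref{eq2} sits inside \eqref{eq2a} as a restriction of $\phi_G$ followed by projection to $\fpp^*$. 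But the $W$ of \eqref{eq2} is the $+i$-eigenspace of the chosen complex structure inside $W_\bR\otimes_\bR\bC$, which is Lagrangian for the complexified symplectic form, and the relation between $(\phi,\phi')$ and $(\phi_G,\phi_{G'})$ is not a naive restriction--projection. Making the two diagrams interact, i.e.\ proving that the theta lift of $K_\bC$-orbits in $\fpp^*$ is compatible (via Kostant--Sekiguchi) with the theta lift of $G_\bC$-orbits in $\fgg^*$, is precisely the content of \cite{Daszkiewicz:2005}; it is not a formal consequence of equivariance of the moment maps, and your phrase that ``$\theta$ and $\theta_\bC$ are compatible with this matching because the moment maps are'' merely restates the assertion to be proved. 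The dimension-counting fallback is also insufficient: two closed $G'_\bC$-stable unions of nilpotent orbit closures of the same dimension need not coincide (distinct complex nilpotent orbits can share a dimension), and Theorem~\ref{TC} only controls $K'_\bC$-orbit data, not which $G'_\bC$-orbits are generated.

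For comparison, the paper obtains the needed compatibility cheaply inside the proof of Lemma~\ref{lem:cpxorb}: by Theorem~8.4 of \cite{Vo89} all open $K_\bC$-orbits in $\AV(\rho^*)$ generate a single complex orbit $\cO_\bC$ with $\overline{\cO_\bC}=\VC(\rho)$; by the signed-Young-diagram description of \cite{Ohta:1991C}, $\theta$ adds a column, so all the lifted orbits $\theta(\cO_{l_j})$ generate a single $\cO'_\bC$ of the right dimension; and the complex-orbit identity $\overline{\cO'_\bC}=\theta_\bC(\overline{\cO_\bC})$ is quoted from [KP] and \cite{Daszkiewicz:2005}. Your skeleton is correct, but the key step needs either this combinatorial/classification input or an explicit citation of the Kostant--Sekiguchi compatibility theorem; the moment-map sketch as written does not supply it.
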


The proof is given in Section \ref{S44}.

The above corollary overlaps with Theorem 0.9 in \cite{Pz} where
Przebinda proves the identity $\VC(\theta(\rho)) =
\theta_\bC(\VC(\rho))$ for dual pairs and unitarizable $\rho$
satisfying some technical conditions.

\subsection{} \label{S13} In Section \ref{S5}, we consider
representations whose $\wtK$-spectrums are the same as the global
sections of $\wtK_\bC$-equivariant algebraic vector bundles on
nilpotent orbits.  We will show that theta lifts in the stable
range preserve such property.
 
First we set up some notation. Let $\sfK$ be a compact group.  Let
$\cO$ be a $\sfK_\bC$-homogeneous space and let $x \in \cO$.  Let $\pi
: \sfK_\bC \rightarrow \cO$ be the natural quotient map given by
$\pi(k) = (\Ad^* k) x$.  Let $\sfK_x$ be the stabilizer of $x$ in
$\sfK_\bC$. For a rational $\sfK_x$-module $(\chi_x, V_x)$, we define
the $\sfK_\bC$-equivariant pre-sheaf $\sL$ on $\cO$ by $\sL(U) =
(\bC[\pi^{-1}(U)] \otimes_\bC V_x)^{\sfK_x}$ for all open subsets $U$
of~$\cO$. By \cite{CPS}, $\sL$ is a $\sfK_\bC$-equivariant
quasi-coherent sheaf with fiber $\chi_x$ at $x$. Moreover, by
Theorem~2.7 in \cite{CPS}, $\chi_x \leftrightarrow \sL$ gives an
equivalence of categories between the category of rational
representations of~$\sfK_x$ and the category of $\sfK_\bC$-equivariant
quasi-coherent sheaves on $\cO \simeq \sfK_\bC/\sfK_x$. We define the
$(\bC[\cO],\sfK_\bC)$-module
\[
\Ind_{\sfK_x}^{\sfK_\bC} \chi_x = (\bC[\sfK_\bC] \otimes V_x)^{\sfK_x} =
H^0(\cO,\sL) .
\]

\smallskip

If $(\cO, x, \chi_x)$ appears in the orbit data attached to a
filtration of a finite length $(\fgg,\wtK)$-module, then we have
\begin{equation}\label{eq:condchi}
  \sL \text{ is generated by its space of global sections } 
  \Ind_{\wtK_x}^{\wtK_\bC} \chi_x. 
\end{equation}
For the rest of this section we will assume that data $(\cO,x,\chi_x)$
satisfy \eqref{eq:condchi}.

We exclude following special cases:
\begin{equation}\label{eq:ddagger} 
\begin{split}
  (G,G') =& (\Sp(2n,\bR),\rO(p,q)) \text{ where } p = 2n \text{ or }
  q=2n;\\
  (G,G') = & (\Sp(2n,\bC), \rO(4n,\bC)).
\end{split} \tag{$\dagger \dagger$}
\end{equation}

\begin{thmA} \label{TF} Suppose $(G,G')$ is in the stable range where
  $G$ is the smaller member. We exclude the special
  case~\eqref{eq:ddagger} above. Let $\rho$ be an irreducible
  admissible genuine $(\fgg, \wtK)$-module.  Let $(\cO, x, \chi_x)$ be
  an orbit datum satisfying \eqref{eq:condchi} such that, as
  $\wtK$-modules
  \[
  \rho^* \ \simeq \ \Ind_{\wtK_x}^{\wtK_\bC}\chi_x.
  \]
  Let $(\cO',x',\chi_{x'})$ be the theta lifting of $(\cO,x,\chi_x)$.
  Then, as $\wtK'$-modules,
\[
\Theta(\rho) \ \simeq \ \Ind_{\wtK'_{x'}}^{\wtK'_\bC} \chi_{x'}.
\]
\end{thmA}

The proof is given in Section \ref{S53}.

\subsection{} \label{S16} We relate our results with a conjecture of
Vogan on geometric quantizations and unipotent representations.

\begin{definition}[Definition~7.13 in \cite{Vo89}]
  Let $\cO\in \frakN_{K_\bC}(\fpp^*)$ and $x\in \cO$. The stabilizer
  $K_x$ acts on the cotangent space $\rT^*_x\cO = (\fkk/\fkk_x)^*$.
  We define the character $\gamma_x$ of $K_x$ by
\[
\gamma_x(k) = \det(\Ad(k)|_{(\fkk/\fkk_x)^*}) \qquad \forall k \in K_x.
\]
A rational representation $\chi_x$ of the double cover $\wtK_x$ is
called {\it admissible} if
\begin{equation} \label{eq5}
  \chi_x(\exp(X)) = \gamma_x(\exp(X/2)) \cdot {\mathrm{Id}} 
  \qquad \forall X\in \fkk_x. 
\end{equation}
An orbit datum $(\cO, x, [\chi_x])$ is called an {\it admissible orbit
  datum} if $\chi_x$ is admissible.  An orbit $\cO\in
\frakN_{K_\bC}(\fpp^*)$ is called {\it admissible} if it is part of an
admissible datum. A representation $\chi_x$ of $\wtK_x$
satisfying \eqref{eq5} is uniquely determined by its character
$[\chi_x]$.
\end{definition}

A $(\fgg, \wtK)$-module $\rho$ is said to have
$\wtK$-spectrum determined by an admissible orbit datum $(\cO,
x,[\chi_x])$ if
\begin{equation} \label{eq:4} 
  \rho|_{\wtK} \, \simeq \, \Ind_{\wtK_x}^{\wtK_\bC} \chi_x
\end{equation}
as a $\wtK$-module. Such a representation $\rho$ could be
considered as a quantization of the orbit~$\cO$. In Conjecture 12.1 of
\cite{Vo89}, Vogan conjectured that, for every admissible orbit datum
$(\cO, x, [\chi_x])$ satisfying certain technical conditions and
$\partial \cO$ has codimension at least~$2$ in $\bcO$, one can attach
a unipotent representation $\rho$ to this orbit datum and
$\rho$ satisfies~\eqref{eq:4}.

\smallskip

In Section~\ref{sec:adm}, we will show that the notion of
admissibility is compatible with theta lifts in the stable range.

\begin{propA}\label{PG}
  Suppose $(G,G')$ is in the stable range where $G$ is the smaller
  member. Let $(\cO, x, [\chi_{x}])$ be an admissible orbit datum for
  $\wtG$. Then its theta lift $\theta(\cO, x, [\chi_{x}])$ is an
  admissible orbit datum for $\wtG'$.
\end{propA}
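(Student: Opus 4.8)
The plan is to verify the defining identity \eqref{eq5} for the theta-lifted datum by unwinding the explicit formula
\[
\chi_{x'} = \varsigma|_{\wtK'_{x'}} \otimes (\varsigma|_{\wtK_x} \otimes \chi_x) \circ \alpha
\]
and comparing it with the determinant character $\gamma_{x'}$ on $\fkk'_{x'}$. Admissibility of $(\cO,x,[\chi_x])$ gives us $\chi_x(\exp X) = \gamma_x(\exp(X/2))$ for $X \in \fkk_x$, and we want the analogous statement with primes. The key point is that all three factors --- the pullback of $\gamma_x$ along $\alpha$, the two restrictions of $\varsigma$, and the ambient $\gamma_{x'}$ --- are each \emph{determinant-type} characters of reductive groups acting on explicitly identifiable vector spaces, so the whole identity reduces to a bookkeeping of dimensions and the linear-algebra structure of the moment maps in Table~\ref{tab:noncpt1}.

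First I would fix compatible base points $x \in \cO$, $w \in W$, $x' \in \cO'$ with $\phi(w) = x$, $\phi'(w) = x'$, and analyze the stabilizers $\wtK_x$, $\wtK'_{x'}$ together with the homomorphism $\alpha\colon K'_{x'} \to K_x$ from \eqref{eq:salpha}. The essential structural fact (which should already be available from the analysis underlying Theorem~\ref{TC}, or provable by the same explicit matrix computations) is that the fibre $\phi^{-1}(x)$, the stabilizer $K_w$ of $w$ in $K_\bC$, and the stabilizers $K_x$, $K'_{x'}$ fit into a ``see-saw at the level of stabilizers'': $K_w \subseteq K_x \cap$ (something inside $K'_{x'}$), and $\alpha$ is essentially the projection. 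I would then compute $\det$ of the adjoint action on each of $(\fkk/\fkk_x)^*$, $(\fkk'/\fkk'_{x'})^*$, and the Fock-character spaces, using that $\varsigma$ is (up to the genuine square root) the character $\det^{-1/2}$ of $\wtU$ on the relevant piece of $W$, restricted to the subgroup in question.

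Concretely, the computation I expect: for $X' \in \fkk'_{x'}$, write $\alpha(X') \in \fkk_x$; then $(\chi_x \circ \alpha)(\exp X') = \gamma_x(\exp(\alpha(X')/2))$ by admissibility of $\chi_x$; and $\alpha$ is a Lie algebra map so $\exp(\alpha(X')/2) = \alpha(\exp(X'/2))$. It remains to check the identity
\[
\gamma_x(\alpha(\exp(X'/2))) \cdot (\varsigma|_{\wtK_x})(\alpha(\exp X')) \cdot (\varsigma|_{\wtK'_{x'}})(\exp X') = \gamma_{x'}(\exp(X'/2)),
\]
which is a relation purely among determinant characters. Taking the derivative at the identity, this becomes an additive identity among traces: $\tfrac12 \operatorname{tr}(\ad \alpha(X') \mid (\fkk/\fkk_x)^*) + (\text{Fock traces}) = \tfrac12 \operatorname{tr}(\ad X' \mid (\fkk'/\fkk'_{x'})^*)$, and this I would verify case by case using the explicit form of $\phi$, $\phi'$, and $\alpha$ in the tables. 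Since $\wtK'_{x'}$ is (essentially) connected up to its component group, matching derivatives plus a check on the finitely many components suffices.

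The hard part will be the last step: the case-by-case trace identity is where the specific geometry of the moment maps enters, and one must be careful about (a) the metaplectic covers and the half-integral powers of $\det$ that $\varsigma$ contributes --- these are exactly what make the ``$X/2$'' in \eqref{eq5} come out right, so they cannot be ignored --- and (b) the component groups of the stabilizers, where a character identity on the Lie algebra does not immediately give one on the group. I would handle (a) by carrying $\varsigma$ as a genuine character of the double cover throughout and only at the end observing that the product of the three genuine contributions descends to $K'_{x'}$, and (b) by using that, in stable range, the map $\alpha$ induces a surjection (or a controlled map) on component groups, reducing the group-level check to a short finite verification. I would also invoke the uniqueness clause ``a representation $\chi_x$ satisfying \eqref{eq5} is uniquely determined by $[\chi_x]$'' to conclude that the lifted character $[\chi_{x'}]$ indeed comes from an admissible representation, not merely that a numerical identity holds.
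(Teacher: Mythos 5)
Your reduction is sound and coincides with the paper's: after plugging in the definition of $\chi_{x'}$ and using admissibility of $\chi_x$ (together with the fact that $\alpha$ is an algebraic group homomorphism, so it commutes with $\exp$), everything comes down to the single character identity
\[
\gamma_{x'}(\exp(X'/2)) = \gamma_x\bigl(\alpha(\exp(X'/2))\bigr)\cdot \varsigma|_{\wtK_x}(\alpha(\exp X'))\cdot \varsigma|_{\wtK'_{x'}}(\exp X'),
\]
whose square is precisely the paper's Lemma~\ref{lem:tangent}, an isomorphism of one-dimensional $\fkk'_{x'}$-modules
$\topform(\fkk'/\fkk'_{x'}) \cong (\topform(\fkk/\fkk_x)\circ\alpha)\otimes \varsigma|_{K'}^{-2}\otimes(\varsigma|_{K}^{-2}\circ\alpha)$.
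The genuine gap is that you leave exactly this identity unproven: you defer it to a ``case-by-case trace identity'' to be read off from ``the explicit form of $\phi$, $\phi'$, and $\alpha$ in the tables.'' But $\alpha$ is not in any table --- it is defined abstractly via the free $K_\bC$-action on the fibre $\phi'^{-1}(x')$ (Corollary~\ref{cor:alpha}) --- and carrying out your plan would require explicit representatives and stabilizers $K_x$, $K'_{x'}$ for \emph{every} nilpotent orbit in \emph{every} stable-range pair, which is a substantial computation you neither perform nor reduce to a finite, checkable list. Likewise your treatment of the group-level statement (``matching derivatives plus a check on the finitely many components,'' using surjectivity of $\alpha$ on component groups) is asserted rather than argued; the component groups of these stabilizers are nontrivial and the check is not actually done. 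So as it stands the heart of the proposition is missing.

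For comparison, the paper proves the key identity uniformly, with no case analysis, by a tangent-space argument at the point $w$: setting $E = K'_\bC K_\bC w$, $F=\phi^{-1}(x)$, $F'=\phi'^{-1}(x')$, one has two exact sequences of $S_w\cong K'_{x'}$-modules, $0\to \rT_wF'\to \rT_wE\to \rT_{x'}\cO'\to 0$ and $0\to \rT_wF\to \rT_wE\to \rT_x\cO\to 0$. Since $\phi'^{-1}(x')$ is a single \emph{free} $K_\bC$-orbit, $\rT_wF'\cong\fkk$ and $\fkk'_{x'}$ acts trivially on its top exterior power, giving $\topform\rT_{x'}\cO'\cong\topform\rT_x\cO\otimes\topform\rT_wF$; and since in stable range $\phi$ is submersive, $0\to \rT_wF\to \rT_wW\to\rT_x\fpp^*\to 0$ gives $\topform\rT_wF\cong\topform W$ as $\fkk'_{x'}$-modules. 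The factor $\topform W$ is then identified with $\varsigma|_{K'}^{-2}\otimes(\varsigma|_K^{-2}\circ\alpha)$ via $\varsigma^{-2}(u)=\det(u|_W)$, which is exactly where your ``Fock traces'' come from --- but obtained at the level of one-dimensional representations of the group $K'_{x'}$, so the component-group issue never arises. If you want to salvage your approach, replace the case-by-case trace computation by this uniform argument (or actually exhibit, for each family in Table~\ref{tab:noncpt1}, the orbit representatives, stabilizers, and $\alpha$, and verify both the Lie-algebra identity and the identity on component groups).
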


The above is a direct consequence of  Proposition \ref{prop:adm}.

Suppose $(G,G')$ is in the stable range where $G$ is the smaller
member and excluding the special case \eqref{eq:ddagger}. Let $\rho$
be an irreducible unitarizable $(\fgg, \wtK)$-module whose
$\wtK$-spectrum is given by some admissible orbit datum
$(\cO,x,[\chi_x])$.  It follows from Appendix \ref{SB1} that $\rho^*$
is an irreducible unitarizable $(\fgg, \wtK)$-module whose
$\wtK$-spectrum is given by the admissible orbit datum
\[
C(\cO,x,[\chi_x]) := (C(\cO),\Ad^*C(x),[\chi_x \circ C])
\] 
where $C$ is a dualizing automorphism on $\wtG$. By Theorem~\ref{TA},
Theorem~\ref{TF} and Proposition~\ref{PG}, $\theta(\rho)$ is an
irreducible unitarizable $(\fgg',\wtK')$-module whose $\wtK'$-spectrum
is given by the admissible orbit datum $\theta(C(\cO,x,[\chi_x]))$.

\subsection{} Finally we construct a series of candidates for
unipotent representations.  Let
\[
G_0, G_1, G_2, \cdots, G_n, \cdots
\]
be a sequence of real classical groups satisfying the following properties:
\begin{enumerate}[(i)]
\item each pair $(G_n, G_{n+1})$ is an irreducible type I reductive
  dual pair with $G_n$ being the smaller member excluding the special
  case \eqref{eq:ddagger}.

\item The corresponding double covers $\wtG_n$ of $G_n$ for the dual
  pairs $(G_{n-1},G_n)$ and $(G_n,G_{n+1})$ are isomorphic. We
   fix an isomorphism between these two double covers of
    $G_n$.

\item The covering group $\wtG_0$ has an irreducible genuine one
  dimensional unitary representation $\rho_0$ such that
  $\rho_0|_{\fgg_0}$ is trivial.
\end{enumerate}
It is clear that $\rho_0$ is attached to the admissible datum $( \Set{
  0 }, 0, {\rho_0}|_{(\wtK_0)_\bC})$.

Let $C_n$ be a dualizing automorphism on $\wtG_n$.  Starting from
$\rho_0$, we define inductively $\rho_{n+1} = \theta(\rho_n)$ and
$(\cO_{n+1}, x_{n+1}, \chi_{n+1}) = \theta(C_n(\cO_n, x_n,
\chi_n))$. The following theorem follows from Section \ref{S16}.

\begin{thmA}
  The $(\fgg_n, \wtK_n)$-module $\rho_n$ is an irreducible and
  unitarizable representation attached to the admissible orbit datum
  $(\cO_n,x_n,\chi_n)$.  Moreover, as $\wtK_n$-module,
  \[
 \rho_n \ \simeq \ \Ind_{\wtK_{x_n}}^{(\wtK_n)_\bC}\chi_n.
  \]
\end{thmA}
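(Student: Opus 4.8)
The plan is to prove this by induction on $n$, using the three cited results as the engine of the inductive step. The base case $n=0$ is set up by the statement itself: $\rho_0$ is the genuine one-dimensional representation with $\rho_0|_{\fgg_0}$ trivial, hence it is obviously irreducible and unitarizable, and it is attached to the admissible orbit datum $(0,0,\rho_0|_{(\wtK_0)_\bC})$; the $\wtK_0$-spectrum identity $\rho_0 = \Ind_{\wtK_{x_0}}^{(\wtK_0)_\bC}\chi_0$ is trivially true since $\cO_0 = \{0\}$, $x_0 = 0$, $\wtK_{x_0} = (\wtK_0)_\bC$, and $\chi_0 = \rho_0|_{(\wtK_0)_\bC}$.

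For the inductive step, assume $\rho_n$ is irreducible and unitarizable, attached to the admissible orbit datum $(\cO_n,x_n,\chi_n)$, with $\rho_n = \Ind_{\wtK_{x_n}}^{(\wtK_n)_\bC}\chi_n$ as $\wtK_n$-modules. Since $(G_n, G_{n+1})$ is in stable range with $G_n$ the smaller member and we have excluded \eqref{eq:ddagger} (and a fortiori $(\dagger)$, which is a sub-case), Theorem~\ref{TA} applies: $\Theta(\rho_n) = \theta(\rho_n) = \rho_{n+1}$ is irreducible and unitarizable. Next I would invoke Appendix~\ref{SB1} (as summarized in Section~\ref{S16}) to see that $\rho_n^* = C_n(\rho_n)$ is again irreducible and unitarizable with $\wtK_n$-spectrum given by the admissible orbit datum $C_n(\cO_n, x_n, [\chi_n]) = (C_n(\cO_n), \Ad^* C_n(x_n), [\chi_n \circ C_n])$; admissibility is preserved because the Chevalley involution intertwines $\gamma_x$ with $\gamma_{C(x)}$. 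Applying Theorem~\ref{TF} to $\rho_n$ — whose $\wtK_n$-spectrum data $C_n(\cO_n,x_n,\chi_n)$ satisfies \eqref{eq:condchi} since it arises as orbit data of the finite-length module $\rho_n^*$ — yields, as $\wtK_{n+1}$-modules,
\[
\rho_{n+1} = \Theta(\rho_n) = \Ind_{\wtK'_{x_{n+1}}}^{(\wtK_{n+1})_\bC}\chi_{n+1},
\]
where $(\cO_{n+1},x_{n+1},\chi_{n+1}) = \theta(C_n(\cO_n,x_n,\chi_n))$ by definition. Finally, Proposition~\ref{PG} shows that since $C_n(\cO_n,x_n,[\chi_n])$ is admissible, so is its theta lift $(\cO_{n+1},x_{n+1},[\chi_{n+1}])$; thus $\rho_{n+1}$ is attached to an admissible orbit datum, closing the induction.

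The one point that requires a little care — and which I expect to be the main obstacle to a fully rigorous writeup — is checking the hypotheses of Theorem~\ref{TF} at each stage, namely that the orbit datum feeding the theta lift genuinely satisfies condition \eqref{eq:condchi} (that $\sL$ is generated by its global sections). Here the key observation is that $\rho_n^*$ is a finite-length $(\fgg_n,\wtK_n)$-module and, by the inductive hypothesis on the $\wtK_n$-spectrum, the orbit datum $C_n(\cO_n,x_n,\chi_n)$ is precisely the (single) orbit datum attached to a good filtration of $\rho_n^*$ — indeed $\Gr\rho_n^*$ is supported on the single closed orbit closure $\overline{C_n(\cO_n)}$ with isotropy representation $\chi_n\circ C_n$. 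By the remark following \eqref{eq:condchi} in Section~\ref{S13}, any orbit datum arising this way automatically satisfies \eqref{eq:condchi}, so no extra argument is needed. One should also note, as flagged after Theorem~\ref{TD}, that the equality $\Theta(\rho_n) = \theta(\rho_n)$ furnished by Theorem~\ref{TA} is exactly what makes the $\wtK$-spectrum formula of Theorem~\ref{TF} transfer to the irreducible lift $\rho_{n+1}$; without irreducibility of the full lift the Gelfand–Kirillov dimension could drop and the identity would fail. Since all the excluded cases \eqref{eq:ddagger} are avoided by hypothesis at every step, the induction runs without further restriction.
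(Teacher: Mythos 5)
Your induction is essentially the paper's own argument: the paper obtains the theorem by iterating the single-step statement of Section~\ref{S16} (Theorem~\ref{TA}, Theorem~\ref{TD}/\ref{TF}, Proposition~\ref{PG}, plus the Chevalley-involution discussion of Appendix~\ref{SB1}), which is exactly your inductive step. Your check of condition~\eqref{eq:condchi} matches the paper's remark that orbit data arising from filtrations of finite-length modules automatically satisfy it, so the two proofs coincide in substance.
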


The above theorem generalizes a result of Yang \cite{Yn1} \cite{Yn2}
where he proves the above theorem for $\rho_1$. A related result on
Dixmier algebras is given in \cite{Br}.

\medskip

\subsection*{Acknowledgment}
We would like to thank Chen-bo Zhu, CheeWhye Chin, Roger Howe,
Jian-Shu Li, Kyo Nishiyama and Peter Trapa for enlightening
discussions.  We also thank the referees and C. Chin for
  careful reading and pointing out a number of inaccuracies in the
first draft. The first author is supported by a MOE-NUS grant
MOE2010-T2-2-113. The second author would like to thank the
hospitality of Aoyama Gakuin University and Hong Kong University of
Science and Technology where part of this paper is written.

\section{Theta lifts of unitary representations in the stable
  range} 

\subsection{} \label{S21} Let $(G,G')$ be a type I irreducible
reductive dual pair in $\Sp(W_\bR)$. We list them in
Table~\ref{tab:sstabncp} below.  We say it is in the stable range with
$G$ being the smaller member if it satisfies the conditions in the
last column of the table.
\begin{table}[hbtp]
  \centering
  \begin{tabular}[h]{c|cc|c}
    & $G$ & $G'$  & Stable range \\
    \hline
    Case~$\bR$ &
    $\Sp(2n,\bR)$ &  $\rO(p,q)$& $2n \leq p, q$  \\
    &$\rO(p,q)$ & $\Sp(2n,\bR)$ & $p+q \leq n$ \\
    \hline
    Case~$\bC$ & $\rU(n_1,n_2)$ & $\rU(p,q)$ &  $n_1+n_2 \leq p,q$ \\
    \hline
    Case~$\bH$ 
    & $\O^*(2n)$ & $\Sp(p,q)$ & $n \leq p,q$\\
    & $\Sp(p,q)$ & $\O^*(2n)$ & $2(p+q) \leq n$ \\    \hline
    Complex groups &
    $\Sp(2n,\bC)$ &  $\rO(p,\bC)$& $4n \leq p$ \\
    & $\rO(p,\bC)$ &  $\Sp(2n,\bC)$ &
    $p \leq n$
  \end{tabular}
\medskip
\caption{Stable range for irreducible Type I dual pairs}
  \label{tab:sstabncp}
\end{table}

We follow the notation in \cite{Howe89}. By Fact 1 in \cite{Howe89}, $K'$
is a member of a reductive dual pair $(K',M)$ in $\Sp(W_\bR)$. We form
the following see-saw pair in $\Sp(W_\bR)$:
\begin{equation} \label{eqdp}
\vcenter{
\xymatrix{
 G' \ar@{-}[d] \ar@{-}[dr] &
 M  \ar@{-}[d] \ar@{-}[dl]\\
 K' & G.}}
\end{equation}
The complex Lie algebra of $M$ has Cartan decomposition $\fmm =
\fmm^{(2,0)} \oplus \fmm^{(1,1)} \oplus \fmm^{(0,2)}$ where
$\fmm^{(1,1)}$ is the complexified Lie algebra of a maximal compact
subgroup $M^{(1,1)}$ of $M$.

Let $\widetilde{\cH} = \set{ v \in \sY | Xv = 0,\,\forall X \in
  \fmm^{(0,2)}}$ be the space of $\wtK'$-harmonics in $\sY$. As an
$\wtMoo \times \wtK'$-module,
\begin{equation} \label{eqharmonics} \widetilde{\cH} =
  \mbox{$\bigoplus_{\sigma' \in \widehat{\wtK'}}$} \ \sigma \otimes
  \sigma'
\end{equation}
where each $\sigma$ is either zero or an irreducible genuine 
$\wtMoo$-module uniquely determined by $\sigma'$.

\begin{prop} \label{P4} Suppose $(G,G')$ is in the stable range where $G$
  is the smaller member. Let~$\rho$ be an irreducible genuine
  $(\fgg,\wtK)$-module. Then as $\wtK'$-modules
\[
\Theta(\rho)|_{\wtK'} = \bigoplus_{\sigma' \in \widehat{\wtK'}}
m_{\sigma'} \sigma' \simeq (\widetilde{\cH} \otimes \rho^*|_{\wtK})^K
\] 
where $m_{\sigma'}$ is the multiplicity of $\sigma'$ in
$\Theta(\rho)$. We have $m_{\sigma'} = \dim \Hom_{\wtK}(\sigma,
\rho)$.
\end{prop}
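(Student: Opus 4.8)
The plan is to compute the $\wtK'$-types of $\Theta(\rho)$ by exploiting the see-saw pair \eqref{eqdp} together with the harmonic decomposition \eqref{eqharmonics}. The starting point is the defining quotient \eqref{eqHowequotient}, which gives $\Hom_{\fgg,\wtK}(\sY,\rho) = \Hom_{\fgg,\wtK}(\sY,\rho)$ and, after taking $\rho$-isotypic components, realizes $\Theta(\rho)$ as the space $\Hom_{\wtK}(\rho, \sY)$-type object, i.e. $\Theta(\rho) = (\sY \otimes \rho^*)^{\wtK}$ as a $(\fgg',\wtK')$-module (this is the standard reformulation of Howe duality; I would cite \cite{Howe89} for it). Restricting attention to $\wtK'$, I only need $(\sY \otimes \rho^*)^{\wtK}$ as a $\wtK'$-module, and since $\rho^*$ is a $\wtK$-module the $\wtK$-invariants can be computed inside $\sY$ alone.

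**Next I would** bring in the harmonic decomposition. Because $(K',M)$ is a dual pair in $\Sp(W_\bR)$ and $\widetilde{\cH}$ is the space of $\wtK'$-harmonics, the classical separation-of-variables / $\wtK'$-invariant-theory statement gives $\sY \cong \widetilde{\cH} \otimes \bC[W]^{\wtK'}$-flavored factorization; more precisely, as $\wtMoo \times \wtK'$-modules one has the harmonic expansion so that the $\wtK'$-type content of $\sY$, tensored against anything, is governed by \eqref{eqharmonics}. The key point is that $K \subseteq M^{(1,1)}$ acts on $\sY$ through the $\wtMoo$ factor (this is exactly why the see-saw is set up this way: $G$ sits inside $M$, and $K$ inside its maximal compact $M^{(1,1)}$), so taking $\wtK$-invariants of $\sY \otimes \rho^*$ amounts to taking, in each summand $\sigma \otimes \sigma'$ of $\widetilde{\cH}$, the space $(\sigma \otimes \rho^*)^{\wtK} = \Hom_{\wtK}(\rho,\sigma)$, tensored with $\sigma'$. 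Summing over $\sigma' \in \widehat{\wtK'}$ yields
\[
\Theta(\rho)|_{\wtK'} = \bigoplus_{\sigma' \in \widehat{\wtK'}} \Hom_{\wtK}(\rho,\sigma) \otimes \sigma',
\]
which is the claimed formula with $m_{\sigma'} = \dim \Hom_{\wtK}(\sigma,\rho)$ (using that $\sigma$ is finite-dimensional so $\Hom_{\wtK}(\rho,\sigma) \cong \Hom_{\wtK}(\sigma,\rho)^*$ has the same dimension).

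**The main obstacle** is justifying that one may restrict to $\widetilde{\cH}$ rather than all of $\sY$ — that is, controlling the "radial" part $\bC[W]^{\wtK'}$ and verifying it contributes nothing extra to the $\wtK$-invariants beyond a harmless multiplicity that is absorbed correctly. In the stable range this is where the hypothesis is used: stable range guarantees that the relevant moment-map geometry is nice enough (the $K'$-action on $W$ has the "polar" structure needed for the harmonic decomposition to be a genuine tensor-product factorization at the level of the relevant modules), and it is also what makes $\sigma$ nonzero and irreducible in \eqref{eqharmonics}. I would handle this by citing Howe's framework in \cite{Howe89} for the precise statement that, in this setting, $(\sY \otimes \rho^*)^{\wtK}$ as a $\wtK'$-module equals $(\widetilde{\cH} \otimes \rho^*|_{\wtK})^K$; the remaining work is the bookkeeping identification of the latter with $\bigoplus_{\sigma'} m_{\sigma'}\sigma'$, which is the routine computation above. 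A secondary point to check is the genuineness/metaplectic cover bookkeeping — that $\sigma$ and $\rho$ live on the same double cover $\wtMoo \supseteq \wtK$ so that $\Hom_{\wtK}$ makes sense — but this is immediate from the construction of the see-saw pair.
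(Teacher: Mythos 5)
Your opening identification is where the argument breaks down: $\Theta(\rho)$ is \emph{not} $(\sY\otimes\rho^*)^{\wtK}$. The correct reformulation (Proposition~\ref{Prop5}) is $\Theta(\rho)\cong(\rho^*\otimes\sY)_{\fgg,K}$, i.e.\ one must also take coinvariants for the diagonal $\fgg$-action, not merely $\wtK$-invariants. The difference is not cosmetic: writing $\sY=\bigoplus_{\sigma'}L(\sigma)\otimes\sigma'$, where $L(\sigma)$ is the lift of $\sigma'$ for the pair $(K',M)$, one gets $(\sY\otimes\rho^*)^{\wtK}=\bigoplus_{\sigma'}\Hom_{\wtK}(\rho,L(\sigma)|_{\wtK})\otimes\sigma'$, and $L(\sigma)|_{\wtK}$ is not admissible (in the stable range it is $\cS(\fpp)\otimes\sigma$ as a $\wtK$-module), so these multiplicities are generally infinite; your space is strictly larger than $\Theta(\rho)|_{\wtK'}$. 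Consequently the ``routine bookkeeping'' with $\widetilde{\cH}$ correctly computes the right-hand side $(\widetilde{\cH}\otimes\rho^*|_{\wtK})^K=\bigoplus_{\sigma'}\dim\Hom_{\wtK}(\sigma,\rho)\,\sigma'$, but it is never actually linked to $\Theta(\rho)$.

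The step you defer to ``Howe's framework'' --- that only the harmonics contribute --- is precisely the substantive content of the proposition and exactly where the stable-range hypothesis must do work; there is no off-the-shelf statement in \cite{Howe89} to cite for it, and your argument never uses the hypothesis beyond vague appeals to ``polar structure''. The paper's proof supplies the missing mechanism via the see-saw pair \eqref{eqdp}: by see-saw reciprocity, $m_{\sigma'}=\dim\Hom_{\wtK'}(\sigma',\Theta(\rho))=\dim\Hom_{\fgg,\wtK}(L(\sigma),\rho)$, and in the stable range $L(\sigma)$ is the \emph{full} generalized Verma module, $L(\sigma)\cong\cU(\fgg)\otimes_{\cU(\fkk)}\sigma$ as a $\fgg$-module (Eq.~(3.5) of \cite{Howe89}, as in \eqref{eq8}), so Frobenius reciprocity gives $m_{\sigma'}=\dim\Hom_{\wtK}(\sigma,\rho)$ as in \eqref{eq9}. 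It is the $\fgg$-coinvariance, processed through this Verma-module structure, that collapses the full Fock space to its harmonic part; outside the stable range $L(\sigma)$ is a proper quotient of the induced module and the multiplicity formula can fail, which confirms that an argument not using stable range in a substantive way cannot be complete. To repair your proof you would need either this see-saw computation or an equivalent direct analysis of the $\fgg$-coinvariants in $(\rho^*\otimes\sY)_{\fgg,K}$ using the stable-range separation of variables $\bC[W]=\cS(\fpp)\otimes\cH$.
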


If $\rho$ is the Harish-Chandra module of a discrete series
representation of $\wtG$, the above proposition is Corollary 5.3 in
\cite{Howe:1983reci}. 

%We also remark that all genuine $\wtK$-types
%appear in~$\widetilde{\cH}$. Hence the above proposition proves that
%$\Theta(\rho)$ is nonzero in the stable range.

\begin{proof}
  Let $L(\sigma)$ denote the (full) theta lift of $\sigma'$, which is
  a unitarizable lowest weight module of $\widetilde{M}$. The
  fact that the pair $(G,G')$ is in the stable range implies
that
  \begin{equation} \label{eq8} L(\sigma) = \cU(\fmm)
    \otimes_{\cU(\fmm^{(1,1)} \oplus \fmm^{(0,2)})} \sigma \simeq
    \cU(\fgg)\otimes_{\cU(\fkk)} \sigma
\end{equation}
as a $\fgg$-module. The first equality follows from Jantzen irreducibility criterion (see Section~6 in \cite{EHW}). In this case
  $L(\sigma)$ is a Harish-Chandra module of a (limit of) holomorphic discrete series representations (see Sections~II.8.2 and III.8.1 in
  \cite{KaV}). The second equality follows from $\fkk = \fgg\cap
(\fmm^{(1,1)}\oplus \fmm^{(0,2)})$ and $\fmm = (\fmm^{(1,1)}\oplus
\fmm^{(0,2)})+ \fgg$ (see (3.4) in \cite{Howe89}). Applying this to
the see-saw pair \eqref{eqdp}, we get
\begin{equation} \label{eq9} 
  m_{\sigma'} = \dim \Hom_{\wtK'}(\sigma', \Theta(\rho)) = 
\dim \Hom_{\fgg,\wtK}(L(\sigma), \rho) = \dim
  \Hom_{\wtK}(\sigma,\rho).
\end{equation}
This proves the proposition.
\end{proof}

\subsection{} \label{S23a}
Let $(\rho, V_\rho)$ be an irreducible unitarizable
Harish-Chandra module of $\wtG$. For the rest of this section we will
prove Theorem \ref{TA}.

First we recall Li's construction of $\theta(\rho)$ \cite{Li1989}. We
denote an element in the inverse image of $g\in M$ by $\tg\in
\wtM$. The actual choice of $\tg$ will not affect the
calculation. Define
\[
\inn{v_1\otimes w_1}{v_2\otimes w_2} = \int_{G}
\inn{\rho^*(\tg)v_1}{v_2}_{\rho^*}
\inn{\tg \cdot w_1}{w_2}_{\sY}  dg
\]
for all $v_1 \otimes w_1, v_2\otimes w_2 \in V_{\rho^*}
\otimes \sY$. All pairings are done in the completion of the
Harish-Chandra modules. 
We set 
\[
\Rad(\inn{}{}) = \Set{\Phi\in V_{\rho^*} \otimes \sY |
  \inn{\Phi}{\Phi'} = 0, \forall \Phi'\in V_{\rho^*} \otimes
  \sY}.
\]
Let
\[
H = (V_{\rho^*} \otimes \sY) /\Rad(\inn{}{}).
\]
We claim that $H \simeq \theta(\rho)$ as irreducible unitarizable
Harish-Chandra modules of $G'$. \linebreak Indeed Li \cite{Li1989}
uses smooth vectors in the definition of $\inn{}{}$
and likewise defines \linebreak $H^\infty = ((V_{\rho^*})^\infty
\otimes \sY^\infty)/ \Rad(\inn{}{}^\infty)$. Theorem 6.1 in
\cite{Li1989} shows that $\theta(\rho)$ is the Harish-Chandra module
of $H^\infty$.  Since $H$ is $\wtK'$-finite and dense in $H^\infty$,
it is equal to the Harish-Chandra module $\theta(\rho)$ of $H^\infty$.
This proves our claim.

We refer to $(\sigma,V_\sigma)$ in $\eqref{eqharmonics}$ and
$L(\sigma)$ in \eqref{eq8}. Then $L(\sigma)$ is an irreducible
unitarizable Harish-Chandra module of $\wtM$ and $\sY =
\bigoplus_{\sigma'} L(\sigma) \otimes \sigma'$.

We set
\[
\inn{v_1\otimes w_1}{v_2\otimes w_2}_{\rho^*}^{\sigma'} = \int_G
\inn{\rho^*(\tg)v_1}{v_2}_{\rho^*} \inn{\tg \cdot
  w_1}{w_2}_{L(\sigma)} \,dg \quad \forall v_i\otimes w_i \in
V_{\rho^*} \otimes L(\sigma)
\]
and define
\[ 
H(\sigma') = \left(V_{\rho^*} \otimes L(\sigma) \right)/
\Rad(\inn{}{}_{\rho^*}^{\sigma'}).
\]
Now $\dim H(\sigma')$ is the multiplicity of $\sigma'$ in $H$ and we
have
\[
H = \mbox{$\bigoplus_{\sigma'}$} H(\sigma') \otimes \sigma'.
\]

 We consider following embeddings:
\begin{equation}\label{eq:multembedding1}
  \xymatrix@R=1em{H(\sigma') = (V_{\rho^*} \otimes L(\sigma))/\Rad(\inn{}{}_{\rho^*}^{\sigma'})
    \, \ar@{^(->}[r]^<>(.5){\iota} &
    \Hom_{G}( V_{\rho^*}^\infty \otimes L(\sigma)^\infty,\bC) \,  {}
    \ar@{^(->}[r]^{\ \ \mathrm{rest.}} & \Hom_{\fgg,K}(V_{\rho^*} \otimes L(\sigma),\bC)}
\end{equation}
where $\iota(\Phi)$ is given by
\begin{equation} \label{eq12}
\Phi \mapsto (\Phi'\mapsto \inn{\Phi'}{\Phi}_{\rho^*}^{\sigma'})\quad
\forall \Phi' \in V_{\rho^*}^\infty \otimes L(\sigma)^\infty. 
\end{equation}

The last term on the right hand side of
\eqref{eq:multembedding1} is
\begin{eqnarray*}
  \lefteqn{\Hom_{\fgg,K}(V_{\rho^*} \otimes L(\sigma) ,\bC) 
    = \Hom_{\fgg,\wtK}(L(\sigma),
    \Hom_\bC(V_{\rho^*},\bC))} \\
  & = & \Hom_{\fgg,\wtK}(L(\sigma),V_{\rho})\\
  & &\text{($L(\sigma)$ is
    $\wtK$-finite, so its image is in  
$\Hom_\bC(V_{\rho^*},\bC)_{\wtK-{\mathrm{finite}}} =  V_{\rho}$)}\\
    &=& 
    \Hom_{\wtK}(V_\sigma, V_\rho) \qquad \text{(by \eqref{eq8})}\\
    &=& \Hom_K(V_{\rho^*} \otimes V_\sigma,\bC) = 
    \Hom_\bC((V_{\rho^*} \otimes V_\sigma)^{K},\bC).
\end{eqnarray*}
The isomorphism between the first term and the last term in above
equalities is given by restriction. Combining these with
  \eqref{eq:multembedding1} gives an inclusion map
\begin{equation} \label{eqn12}
H(\sigma') \hookrightarrow \Hom_\bC((V_{\rho^*} \otimes
V_\sigma)^{K},\bC)
\end{equation}
given by \eqref{eq12} but for $\Phi' \in (V_{\rho^*} \otimes
V_\sigma)^{K}$.  By \eqref{eq9}
\[
\dim H(\sigma') \leq \dim \Hom_{\wtK}(V_\sigma, V_{\rho}) = \dim
\Hom_{\wtK'}(\sigma', \Theta(\rho))
\]
and it is finite. 

\begin{lemma} \label{L22} Let $\bfdd \in (V_{\rho^*} \otimes
  V_\sigma)^{K}$ be a nonzero vector. Then the pairing between
  $V_{\rho^*} \otimes L(\sigma)$ and $\bfdd$ in~\eqref{eq12} is
  non-vanishing.
\end{lemma}

The above lemma implies that \eqref{eqn12} is an isomorphism so that
$\Theta(\rho)$ and $H$ have the same $\wtK$-multiplicities.  This will
prove Theorem~\ref{TA}.

\medskip

In order to prove Lemma \ref{L22}, we first exhibit a globalization of the Harish-Chandra module $L(\sigma)$. Our references
are \cite{KaV} and~\cite{JaV}.  We refer to $M$ in \eqref{eqdp}. Let
\[
\Hol(\wtM,\wtMone,V_\sigma) = \Set{ f : \wtM \rightarrow V_\sigma |
\begin{array}{l}
  f \mbox{ is analytic,} \\
  f(\tg\tk) = \sigma(\tk^{-1}) f(\tg) \ \ \forall \tg \in \wtM, \tk 
\in \wtMone, \\
  r(X) f = 0 \ \ \forall X \in \fmm^{(0,2)}.
\end{array}
}.
\]
Here $r(X)$ denote the right derivation action.  Let $\Set{v_i}$ be an
orthonormal basis of $V_\sigma \subset L(\sigma)$.  Then
\[
\xi : v\mapsto \left(\tg\mapsto \sum_{i}\inn{\tg^{-1} \cdot
v}{v_i}_{L(\sigma)} v_i\right)
\]
defines an injective $(\fmm, \wtMone)$-module homomorphism $\xi \colon
L(\sigma) \to \Hol(\wtM, \wtMone;V_\sigma)$.

For any $g\in M$, there are unique elements $z(g) \in \fmm^{(0,2)}$,
$k(g) \in M^{(1,1)}_\bC$ and $z'(g) \in \fmm^{(2,0)}$ such that $g =
\exp(z(g))k(g) \exp(z'(g))$. The map $g \mapsto k(g)$ lifts to a map
$\tilde{k} : \wtM \rightarrow \wtMone_{\bC}$ (see page 8 in
\cite{KaV}).  Let $\Omega$ denote the image of the map~$z$ and let
$\zeta : \wtM \rightarrow M \stackrel{z}{\rightarrow} \Omega$ denote
the composite map. Then
\begin{equation}\label{eq:domain}
\Omega = \{ z(g) \in \fmm^{(0,2)} : g \in M \} \simeq
M/M^{(1,1)} \simeq \wtM/\wtM^{(1,1)}
\end{equation}
is a bounded symmetric domain in
$\fmm^{(0,2)}$ and
\[
M \subset \exp(\Omega) \cdot M^{(1,1)}_\bC \cdot \exp(\fmm^{(2,0)}).
\]

Let $\Hol(\Omega,V_\sigma)$ denote the space of holomorphic functions
on $\Omega$ with values in $V_\sigma$.  We define $\rP :
\Hol(\wtM,\wtM^{(1,1)},V_\sigma) \rightarrow \Hol(\Omega,V_\sigma)$ in
the following way: For $f \in \Hol(\wtM,\wtM^{(1,1)},V_\sigma)$, we
set $\rP f \in \Hol(\Omega,V_\sigma)$ by $\rP f(\tg\wtM^{(1,1)}) = \sigma(\tilde{k}(\tilde{g})) f(\tg)$. Then $\rP$ is a bijection using
\eqref{eq:domain}.

Let $\barxi = \rP \circ \xi : L(\sigma) \to \Hol(\Omega,
V_\sigma)$.  Let $\bC[\fmm^{(0,2)}]$ denote the space of polynomials
on~$\fmm^{(0,2)}$. Then $\barxi (L(\sigma))$ is the linear span of
\[
\Set{p \times \barxi(v) | p \in \bC[\fmm^{(0,2)}], v\in
  V_\sigma}
\] 
because $L(\sigma)$ is a full generalized Verma module.

\medskip

We write $V_\sigma = \bigoplus_{l \in L} D_l$ and $V_{\rho^*} =
\bigoplus_{j \in J} D_j$ as direct sums of irreducible $\wtK$-modules.
Then
\[
(V_{\rho^*} \otimes V_\sigma)^K =
 \bigoplus_{j \in J} \bigoplus_{l \in L} (D_j \otimes D_l)^K  
= \bigoplus_{l \in L} \bigoplus_{D_j \simeq D_l^*} (D_j \otimes D_l)^K.
\] 
Let $\{ d_{l \lambda} : \lambda = 1, \ldots, \dim D_l \}$ be
  an orthonormal basis of $D_l$ and let $\{ d_{j \lambda}^* \}$ be a
basis of $D_j \simeq D_l^*$ which is dual to $\{ d_{l \lambda} \}$.  Then a vector $\bfdd \in (V_{\rho^*}
  \otimes V_\sigma)^{K}$ in Lemma \ref{L22} is of the form
\begin{equation} \label{eqbfd}
\bfdd = \sum_{j \in J, l \in L} c_{jl} \left(\sum_{\lambda}
  d_{j\lambda}^* \otimes d_{l\lambda}\right)
\end{equation}
where $c_{jl} \in \bC$. Here $c_{jl} = 0$ unless $D_j \simeq
D_l^*$. We suppose $c_{j_0 l_0} \neq 0$ for some $j_0 \in J$ and $l_0
\in L$. 

Let
\[
\cC(\tG,\wtK;V_\sigma) = \Set{f \in \cC(\tG,V_\sigma) |
  f(\tg\tk) = \sigma(\tk^{-1}) f(\tg) \ \forall k \in \wtK}
\]
be the space of continuous sections. We define a $G$-module
  homomorphism $\xi_{\bfdd} \colon V_{\rho^*}^\infty \to \cC(\tG,\wtK;V_\sigma)$ by
\[
\xi_{\bfdd} : v \mapsto \left(\tg \mapsto \sum_{j \in J, l \in L} c_{jl} \sum_\lambda
  \overline{\inn{\rho^*(\tg^{-1})v}{d_{j\lambda}^*}_{\rho^*}}d_{l
    \lambda} \right) \ \ (\forall \tg \in \tG).
\]

Let $\Omega_0 \simeq \tG/\wtK$ denote the image $\zeta(\tG)$ in
$\Omega$. We have $\rP_0\colon \cC(\wtG,\wtK;V_\sigma) \to
\cC(\Omega_0;V_\sigma)$ defined by the same formula as $\rP$. We
denote $\barxi_{\bfdd} = \rP_0\circ \xi_{\bfdd}$. Let $dx$ be the
$G$-invariant measure on $\Omega_0 \simeq G/K$ compatible with the
Haar measure on $G$ i.e. $dg = dx \, dk$. We recall that $c_{j_0 l_0}
\neq 0$. Let $d_{l_0 \lambda_0}$ be a unit vector in the orthonormal
basis of $D_{l_0}$.  Let $w$ in $L(\sigma)$ such that
$\barxi(w)=p\times \barxi(d_{l_0 \lambda_0})$.  Let $v = d_{j_0
  \lambda_0}^*$ be the corresponding unit vector in the orthonormal
basis of $D_{j_0} \subseteq V_{\rho^*}$.  Then $v\otimes w \in
V_{\rho^*} \otimes L(\sigma)$ and we have
\begin{eqnarray}
  \inn{v\otimes w}{\bfdd}_{\rho^*}^{\sigma}
  &  =& \sum_{j \in J, l \in L} c_{jl} \int_{G}\sum_{\lambda} \inn{\rho^*(\tg^{-1})v}{d_{j\lambda}^*}_{\rho^*} \inn{\tg^{-1}\cdot w}{d_{l\lambda}}_{L(\sigma)}  \, dg \nonumber \\
  & = & \int_{G} \inn{\xi(w)(\tg)}{\xi_{\bfdd}(v)(\tg)}_{V_\sigma}\, dg \nonumber \\
  & = & \int_{G} \inn{\sigma(\widetilde{k(g)})\xi(w)(\tg)}{\sigma(\widetilde{k(g)})\xi_{\bfdd}(v)(\tg)}_{V_\sigma}\, dg \nonumber \\
  & = & \int_{G/K} \inn{\barxi(w)(gK)}{\barxi_{\bfdd}(v)(gK)}_{V_\sigma}\,dgK \nonumber\\
  & = & \int_{\Omega_0} p(x) \inn{\barxi(d_{l_0 \lambda_0})(x)}{\barxi_{\bfdd}(d_{j_0 \lambda_0}^*)(x)}_{V_\sigma}\,dx \nonumber \\
  & = & \int_{\Omega_0} p(x) f(x) dx  \label{eq14}
\end{eqnarray}
where $f(x) =
\inn{\barxi(d_{l_0 \lambda_0})(x)}{\barxi_{\bfdd}(d_{j_0 \lambda_0}^*)(x)}_{V_\sigma}$. The
function $f(x)$ is a nonzero continuous function because $f(0) =
\sum_{j \in J, l \in L} c_{jl} \sum_{\lambda}
\inn{d_{j_0\lambda_0}^*}{d_{j\lambda}^*}_{\rho^*} \inn{d_{l_0
    \lambda_0}}{d_{l\lambda}}_{L(\sigma)} = c_{j_0 l_0} \neq 0$.  We
extend $f(x)$ to the boundary of $\Omega_0$ by $0$.

By Li \cite{Li1989}, the integration \eqref{eq14} is absolutely
convergent for every $p\in \bC[\fmm^{(0,2)}]$.  This is the place
where we exclude the Case $(\dagger)$ in Section \ref{S11}.

\smallskip

It remains to show that \eqref{eq14} is nonzero for some $p(x) \in
\bC[\fmm^{(0,2)}]$.  By \cite{Howe:1983reci}, the restriction of
$\bC[\fmm^{(0,2)}]$ to the compact subset $\overline{\Omega_0}$ forms
a dense subset in $\cC(\overline{\Omega_0})$ under sup-norm by the
Stone-Weierstrass Theorem. Note that any open subset of
$\overline{\Omega_0}$ has non-zero measure. Hence
$\int_{\overline{\Omega_0}}p(x) f(x)\, dx$ is non-zero for some~$p(x)$
by an approximation of identity argument. This completes the proof of
Lemma \ref{L22} and Theorem~\ref{TA}.

\section{Natural filtrations and corresponding $(\cS(\fpp),K)$-modules}

\subsection{} 
Let $(G,G')$ be an irreducible type I dual pair as in Table
\ref{tab:sstabncp}. We do not assume that it is in the stable range except
in Lemma \ref{L3}. Let $\rho$ be an irreducible genuine
$(\fgg,\wtK)$-module. Let $\rho^*$ denote its dual (contragredient)
$(\fgg,\wtK)$-module and let $\rho' = \Theta(\rho) $ denote its full
theta lift. For any module $\varrho$, we denote its underlying space
by $V_\varrho$.

\subsection{} 
The Fock model $\sY$ is realized as complex polynomials on $W$, so
$\sY = \bigcup_b \sY_b$ is filtered by degrees. See Appendix
\ref{sec:Fock}. Let $(\tau, V_\tau)$ be a lowest degree $\wtK$-type of
$(\rho, V_\rho)$ with degree $j_0$.  Let $V_\tau \otimes V_{\tau'}$ be
the image of joint harmonics in $V_\rho\otimes V_{\rho'}$.  By
\cite{Howe89}, $V_{\rho'} = \cU(\fgg')V_{\tau'}$. Thus we define a
good filtration on $V_{\rho'} = \bigcup_j V'_j$ by setting
$V_j' = \cU_j(\fgg') V_{\tau'}$.

We view $V_{\rho^*} = \Hom_\bC(V_\rho,\bC)_{\wtK-{\mathrm{finite}}}$. Let
$V_{\tau^*}\subset V_{\rho^*}$ be an irreducible $\wtK$-submodule with
type $\tau^*$ which pairs perfectly with $V_\tau$.  By Theorem 13 (5)
in \cite{He:2000}, the lowest degree $\wtK$-type has multiplicity one
in $\rho$.  Hence $V_\tau$ and $V_{\tau^*}$ are well defined.
 
Likewise we define filtrations on $V_\rho$ and $V_{\rho^*}$ by
$\Set{V_j := \cU_j(\fgg) V_{\tau}}_{j \in \bN}$ and \linebreak $\Set{ V_j^* :=
  \cU_j(\fgg)V_{\tau^*}}_{j \in \bN}$ respectively. We will clarify
the relationships between them in Appendix \ref{SB2}.

\def\sspan{\mathrm{Span}}

\medskip 

Let $\bfE = V_{\rho^*} \otimes \sY$. We set $\bfE_{\fgg,K} =
\bfE/\sspan\set{ X v,kv -v| v \in \bfE, X \in \fgg, k \in K}$ and
$\bfE_{\fpp} = \bfE/ \sspan\set{ X' v | v \in \bfE, X'\in \fpp}$.  By
Proposition~2.3 in \cite{LMT},
\[
\Theta(\rho)  \simeq \bfE_{\fgg,K} =
\left(\bfE_{\fpp}\right)^{K}.
\]

Let $\eta\colon \bfE \twoheadrightarrow \left(\bfE_{\fpp}\right)^{K}
\simeq V_{\rho'}$ be the natural quotient map.  We define a filtration
on $\bfE$ by
\[
\bfE_j = \sum_{2a+b=j} V_a^* \otimes \sY_b.
\]

\begin{lemma}[Section~2~\cite{LMT}]
  We have $\eta(\bfE_{j_0+2j}) = \eta(\bfE_{2j_0+2j+1}) = V_j'$.  \qed
\end{lemma}

Let $\pr_\fpp$ and $\pr_K$ be the projection to $\fpp$-coinvariants
and $K$-invariants respectively.  The last lemma says that $\bfE_j$ is
compatible with the filtration $\{ V_j' \}_{j \in \bN}$ on
$V_{\rho'}$.  Taking the graded module, $\eta$ induces a map
\begin{equation} \label{eq19b} \xymatrix@C=3em{ \Gr{V_{\rho^*}}\otimes
    \Gr \sY\ar@{->>}[r]^{\epsilon \otimes 1 \ \ } & \Gr V_{\rho^*}
    \otimes_{\cS(\fpp)} \Gr \sY \ar@{->>}[r] & \Gr(\pr_\fpp(\bfE))
    \ar@{->>}[r]^<>(.5){\Gr\pr_K} & \Gr V_{\rho'}.}
\end{equation}
Here $\epsilon : \Gr{V_{\rho^*}} \rightarrow \Gr{V_{\rho^*}}$ is the
$(\cS(\fpp),\wtK)$-module isomorphism such that $\epsilon(x) = (-1)^a
x$ for all $x \in \Gr^a{V_{\rho^*}}$.

\subsection{}
We recall that $\rU$ is a maximal compact subgroup of
$\Sp(W_\bR)$. Let $\sp^{(1,1)}$ be the complexified Lie algebra of
$\rU$. Let $\fsp(W_\bR) \otimes \bC = \sp^{(2,0)} \oplus \sp^{(1,1)}
\oplus \sp^{(0,2)}$ denote the complexified Cartan decomposition (see
Section \ref{SA1}).  Let $\fss = \sp^{(2,0)} \oplus \sp^{(0,2)}$. We
recall that $\varsigma$ is the minimal one dimensional
$\widetilde{\rU}$-type of the Fock model~$\sY$. We
extend $\varsigma$ to an $(\cS(\fss),\widetilde{\rU})$-module where
$\fss$ acts trivially. We will continue to denote this one dimensional
module by $\varsigma$.  In this way, $\Gr \sY =
\bigoplus(\sY_{a+1}/\sY_a) \simeq \varsigma \otimes \bC[W]$. Here
$\rU$ acts on $\bC[W]$ by $(k \cdot f)(w) = f(k^{-1} w)$ for $k \in
\rU$, $f \in \bC[W]$ and $w \in W$ (c.f. Section \ref{sec:Fock}). The
algebra $\cS( \sp^{(0,2)})$ acts trivially on $\bC[W]$ while $\cS(
\sp^{(2,0)})$ acts by multiplication by degree two homogeneous
polynomials. Since $(G,G')$ is a reductive dual pair in $\rSp(W_\bR)$,
we denote the restriction of $\varsigma$ as a
$(\cS(\fpp),\wtK)$-module by $\varsigma|_{\wtK}$. Similarly we get a
one dimensional $(\cS(\fpp'),\wtK')$-module $\varsigma|_{\wtK'}$.

Let $\bfA = \varsigma|_{\wtK}\otimes \Gr V_{\rho^*}$ and $\bfB =
\varsigma|_{\wtK'}^{-1}\otimes \Gr V_{\rho'}$. Since $\rho$ is a genuine
Harish-Chandra module of~$\tG$, $\bfA$ is an
$(\cS(\fpp),K_\bC)$-module. Similarly $\bfB$ is an
$(\cS(\fpp'),K_\bC')$-module. 

We note that that $K_\bC$ acts on $\bfA \otimes \bC[W]$ reductively
and preserves the degrees. Then~\eqref{eq19b} gives the following
$(\cS(\fpp'),K_\bC')$-module morphisms
\begin{equation} \label{eq:gretaiso} \xymatrix{ \bfA
    \otimes_{\cS(\fpp)} \bC[W] \ar@{->>}[r] &
    \left(\bfA \otimes_{\cS(\fpp)} \bC[W] \right)^{K_\bC}
    \ar@{->>}[r]^<>(.5){\eta_0}& \varsigma|_{\wtK'}^{-1} \otimes
    \left( \Gr(\pr_\fpp(\bfE))\right)^{K_\bC} \simeq \bfB.  }
\end{equation}
The merit of introducing $\varsigma$ is that the $\wtK \cdot
\wtK'$ action on $\Gr \sY$ descends to a geometric $K_\bC \cdot
K_\bC'$ action on $\bC[W]$.

Since $\bC[W]$ is an $(\cS(\fss), \rU_\bC)$-module, it is also an $(\cS(\fpp), K_\bC) \times (\cS(\fpp'), K_\bC')$-module.

\begin{lemma} \label{L3} Suppose $(G,G')$ is in the stable range where
  $G$ is the smaller member.  Then~$\eta_0$ in
  \eqref{eq:gretaiso} is an isomorphism, i.e.
\[
\bfB \simeq (\bfA \otimes_{\cS(\fpp)} \bC[W])^{K_\bC}
\]
as $(\cS(\fpp'), K_\bC')$-modules. Here $(\cS(\fpp'), K_\bC')$ acts trivially on $\bfA$. 
\end{lemma}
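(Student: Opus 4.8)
The plan is to reduce the statement to a question about the structure of the Fock module $\sY$ as a joint $\wtMoo \times \wtK'$-module and to exploit the stable range hypothesis via the isomorphism \eqref{eq8}. First I would unwind what $\eta_0$ does: starting from Proposition \ref{P4} we know $\Theta(\rho)|_{\wtK'}$ has $\sigma'$-multiplicity $\dim\Hom_{\wtK}(\sigma,\rho)$, where $\sigma$ is the $\wtMoo$-partner of $\sigma'$ in the harmonic decomposition \eqref{eqharmonics}. On the other hand, the right-hand side $(\bfA \otimes_{\cS(\fpp)} \bC[W])^{K_\bC}$ can be computed by the graded analogue of the same see-saw argument: $\bC[W] = \Gr\sY$ (up to the twist by $\varsigma$) carries a geometric $K_\bC \cdot K'_\bC$-action, and its decomposition as a $\Gr$-module mirrors the Howe duality decomposition. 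The key point is that in the stable range the $\wtK$-module $\sigma$ generates, under $\cU(\fgg)$, the full generalized Verma module $L(\sigma) \cong \cU(\fgg)\otimes_{\cU(\fkk)}\sigma$ — this is exactly \eqref{eq8} — so passing to associated graded objects, $\Gr L(\sigma) \cong \cS(\fpp)\otimes_\bC \sigma$ is a \emph{free} $\cS(\fpp)$-module on $\sigma$. This freeness is what makes the tensor-over-$\cS(\fpp)$ computation collapse to a computation over $\wtK$ alone.

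Concretely, the steps I would carry out are: (1) Identify $\Gr\sY$, as a $(\cS(\fpp)\otimes\cS(\fpp'), K_\bC\times K'_\bC)$-module, with $\bigoplus_{\sigma'} \Gr L(\sigma) \otimes \sigma'$, using that the harmonic decomposition \eqref{eqharmonics} together with the $\cS(\fmm^{(0,2)})$-action recovers all of $\sY$ (since $L(\sigma)$ is a full generalized Verma module), and then restricting the $\wtM$-action to $\fgg \subset \fmm$; by \eqref{eq8} the restriction of $\Gr L(\sigma)$ to $\cS(\fpp)$ is $\cS(\fpp)\otimes_{\bC}\sigma$. (2) Tensor the isomorphism of (1) with $\bfA = \varsigma|_{\wtK}\Gr V_{\rho^*}$ over $\cS(\fpp)$; because $\cS(\fpp)\otimes_\bC\sigma$ is free over $\cS(\fpp)$, the functor $\bfA\otimes_{\cS(\fpp)}(-)$ is exact here and yields $\bfA\otimes_{\cS(\fpp)}\Gr\sY \cong \bigoplus_{\sigma'}(\bfA\otimes_\bC\sigma)\otimes\sigma'$ as graded modules, with no higher Tor terms to worry about. (3) Take $K_\bC$-invariants: since $K_\bC$ is reductive its invariants are exact, so the left side becomes $\bigoplus_{\sigma'}(\bfA\otimes_\bC\sigma)^{K_\bC}\otimes\sigma' = \bigoplus_{\sigma'}\Hom_{K_\bC}(\sigma^*, \varsigma|_{\wtK}\Gr V_{\rho^*})\otimes\sigma'$, which after the $\varsigma$-bookkeeping matches $\dim\Hom_{\wtK}(\sigma,\rho)$ copies of $\sigma'$ — exactly the $\sigma'$-multiplicity of $\bfB$ coming from Proposition \ref{P4} and the fact that $\Gr V_{\rho'}$ and $V_{\rho'}$ agree as $\wtK'$-modules. (4) Since $\eta_0$ is already known to be \emph{surjective} from \eqref{eq:gretaiso}, and both sides are $\wtK'$-admissible with the \emph{same} finite multiplicity in every $\wtK'$-type $\sigma'$, the surjection is forced to be an isomorphism.

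The main obstacle, and the step that requires genuine care, is step (1): making precise the claim that $\Gr\sY$ decomposes compatibly with both the $\cS(\fpp)$-module structure and the geometric $K_\bC\cdot K'_\bC$-action, i.e. that the degree filtration on $\sY$ interacts correctly with the harmonic-plus-Verma decomposition so that passing to $\Gr$ commutes with restricting the $\fmm$-action to $\fgg$. One must check that the filtration on $L(\sigma)$ induced from $\sY_b$ is (up to a shift by the degree of the lowest $\wtM^{(1,1)}$-type) the Verma filtration $\cU_a(\fgg)\sigma$, which is precisely the content of the degree-counting result $l(q(\sY_{j_0+j})) = V'_{\lfloor j/2\rfloor}$ imported from \cite{LMT} and recorded just before the lemma, applied now in the $(K',M)$ see-saw rather than the $(G,G')$ one. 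Once that compatibility is in place, the rest is the formal exactness bookkeeping of steps (2)--(4). I would also remark that the genuineness of $\rho$ is used only to ensure $\bfA$ and $\bfB$ are honest $K_\bC$- (resp. $K'_\bC$-) modules rather than modules for the double covers, so that the geometric invariants in step (3) make sense.
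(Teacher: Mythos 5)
Your argument is correct and is essentially the paper's own proof: the paper packages your steps (1)--(2) as the single statement that, in the stable range, $\bC[W] = \cS(\fpp)\otimes\cH$ as an $(\cS(\fpp),K)\times K'$-module (the graded form of \eqref{eq8}), then matches $K'$-types with $\bfB$ via Proposition \ref{P4} and concludes, exactly as in your step (4), from the surjectivity of $\eta_0$ and the admissibility of $\bfB$. Your extra care about compatibility of the degree filtration with the harmonic--Verma decomposition is the only place you elaborate beyond the paper, and it is the right point to check.
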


\begin{proof}
  We recall that $\widetilde{\cH}$ denotes the space of harmonics in
  \eqref{eqharmonics}. Let $\cH = \varsigma^{-1}
  \widetilde{\cH}$. Under the stable range assumption $\bC[W] =
  \cS(\fpp) \otimes \cH$ as an $(\cS(\fpp), K_\bC) \times
  K_\bC'$-module.  Since $\bfA = \varsigma|_{\wtK} \otimes
  V_{\rho^*}$, as $K_\bC'$-modules,
\[
(\bfA \otimes_{\cS(\fpp)} \bC[W])^{K_\bC} =(\bfA \otimes_{\cS(\fpp)}
(\cS(\fpp) \otimes \cH))^{K_\bC} = (\bfA|_{K_\bC} \otimes \cH)^{K_\bC}
\simeq \bfB|_{K_\bC'}
\]
by Proposition \ref{P4}. The map $\eta_0$ is a surjection and $\bfB$
is an admissible $K_\bC'$-module. The lemma follows from the
equality of $K'$-types.
\end{proof}

\subsection{} \label{S33} {\it Proof of Theorem \ref{TB}.}  Since the
filtration on $\rho^*$ (resp. $\rho'$) is good, the graded module
$\bfA$ (resp. $\bfB$) is a finitely generated $\cS(\fpp)$-module
(resp. $\cS(\fpp')$-module). Let $\sA$ be the associated coherent
sheaf of $\bfA$ on $\fpp^*$.  Using the moment maps
\[
\xymatrix{\fpp^* & \ar[l]_{\phi} W \ar[r]^{\phi'} & \fpp'^*}
\] 
we see that the associated quasi-coherent sheaf of $\bfA
\otimes_{\cS(\fpp)} \bC[W]$ on $\fpp'^*$ is $\phi_*' \phi^* \sA $. Let
$\sB $ be the associated quasi-coherent sheaf of $\bfB$ on $\fpp'^*$.

By definition, $\AV(\rho^*) = \Supp(\sA)$ and $\AV(\Theta(\rho)) =
\Supp(\sB)$.  By \eqref{eq:gretaiso}, $\sB $ is a subquotient of the
quasi-coherent sheaf $\phi_*'\phi^* \sA$ so
  \[ \Supp(\sB ) \subseteq \Supp(\phi_*'
  \phi^*\sA ) \subseteq\overline{\phi'(\Supp(\phi^*\sA ))}
  \subseteq \overline{\phi'(\phi^{-1}(\Supp(\sA)) )} = \theta(\Supp(\sA)).
  \] 
This proves the theorem. \qed

\smallskip

The above proof also applies to type II reductive dual pairs.

\section{Associated cycles}

\subsection{} 
Throughout this section, we suppose $(G,G')$ is in the stable range
where $G$ is the smaller member. Let $\rho$ be an irreducible genuine
$(\fgg,\wtK)$-module. The objective of this section is to prove
Theorems \ref{TC} and \ref{TD}.

%Let $\cO \in \frakN_{K_\bC}(\fpp^*)$ and $\cO'=\theta(\cO)$.  Pick a
%$w \in W$ such that $x = \phi(w) \in \cO$ and $x' = \phi'(w) \in
%\cO'$. 

\begin{prop} \label{prop:alpha} Suppose $(G,G')$ is in the stable
  range where $G$ is the smaller member. Let $\cO \in
  \frakN_{K_\bC}(\fpp^*)$ and $\cO' = \theta(\cO)$.  We fix a $w\in W$
  such that $x = \phi(w)\in \cO$ and $x'= \phi'(w) \in \cO'$. Let $K_x
  = \Stab_{K_\bC}(x)$ and $K'_{x'} =\Stab_{K'_\bC}(x')$.
\begin{enumerate}[(i)]
\item For every $k' \in K'_{x'}$, there exists a unique $k \in K_{x}$
  such that $(k')^{-1} \cdot w = k\cdot w$. We denote $k$ by
  $\alpha(k')$.

\item The function $\alpha : K'_{x'}\twoheadrightarrow K_{x}$ defined
  by $k' \mapsto \alpha(k')$ in (i) is a surjective group
  homomorphism. In particular,
\begin{equation}\label{eq:stab}
\Stab_{K_\bC\times K'_\bC}(w) = K_x\times_{\alpha} K'_{x'} :=
\Set{(\alpha(k'),k')|k'\in K'_{x'}}.
\end{equation}
\end{enumerate}
\end{prop}

We will prove this proposition in Section \ref{SA4} after we study
some properties of the moment maps. The group homomorphism $\alpha$
depends on the choice of~$w$. Indeed if we replace $w$ by
$(k_0,k'_0)\cdot w$, then the corresponding group homomorphism
becomes $\tilde{\alpha} \colon K'_{k'_0\cdot x'} \to K_{k_0\cdot x}$
which is given by $k' \mapsto k_0\alpha({k'_0}^{-1}k' k'_0)k_0^{-1}$.

Pre-composition with $\alpha$ defines a map
$\alpha^*$ from the set of $K_{x}$-modules (resp. virtual characters
of $K_{x}$) to the set of $K_{x'}'$-modules (resp. virtual characters
of $K_{x'}'$).

\subsection{} 
The next result is a key lemma which could be viewed as an enhancement of Section~1.3 in \cite{NZ}.  One may skip its proof in
the first reading.

\begin{lemma} \label{lem:isotropy1} Let $A$ be a $(\bC[\bcO],
  K_\bC)$-module. Define an $(\cS(\fpp'),K'_\bC)$-module by
\[B = (\bC[W]\otimes_{\cS(\fpp)} A)^{K_\bC}.\] Then $B$ is a
$(\bC[\bcOp],K'_\bC)$-module.  

Let $\sA $ and $\sB $ be the quasi-coherent sheaves on $\bcO$ and
$\bcOp$ associated to $A$ and $B$ respectively\footnote{We will abuse
  notation and continue to denote their extensions by zero to $\fpp^*$
  and $\fpp'^*$ by $\sA$ and $\sB$ respectively.}. Then we have the
following isomorphism of $K'_{x'}$-modules:
\[
i_{x'}^*\sB \simeq \alpha^*(i_{x}^* \sA).
\]
In particular, $\dim i_{x'}^*\sB = \dim i_x^*\sA$ if $A$ is finitely
generated.
\end{lemma}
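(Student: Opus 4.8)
The plan is to localize the algebraic isomorphism of Lemma~\ref{L3} (more precisely the surjections in \eqref{eq:gretaiso}) at the chosen point, so that taking fibers of the associated sheaves corresponds to a Cartan-type reduction of the invariant-theoretic construction $B = (\bC[W] \otimes_{\cS(\fpp)} A)^{K_\bC}$. First I would establish that $B$ is genuinely a $(\bC[\bcOp], K'_\bC)$-module: since $A$ is supported on $\bcO$, the module $\bC[W] \otimes_{\cS(\fpp)} A$ is supported on $\phi^{-1}(\bcO)$, hence after taking $K_\bC$-invariants and pushing forward by $\phi'$ the result is supported on $\overline{\phi'(\phi^{-1}\bcO)} = \theta(\bcO) = \bcOp$; this is exactly the reasoning already used in the proof of Theorem~\ref{TB}. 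The action of $\cS(\fpp')$ on $B$ factors through $\bC[\bcOp]$ because $\cS(\fpp')$ acts geometrically on $\bC[W]$ and the comorphism $\phi'^*$ carries the ideal of $\bcOp$ into the ideal cutting out $\phi^{-1}(\bcO)$ in $\bC[W]$, which annihilates the tensor product.

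The heart of the argument is the fiber computation $i_{x'}^* \sB \cong \alpha^*(i_x^* \sA)$. The natural approach is to use that taking the fiber at a point is right-exact and commutes with the relevant base changes, and to choose $w \in W$ with $\phi(w) = x$, $\phi'(w) = x'$ as a "transverse" point over the orbits. Concretely, $i_{x'}^* \sB$ is $B \otimes_{\bC[\bcOp]} \bC_{x'} = B / \fmm_{x'} B$ where $\fmm_{x'}$ is the maximal ideal of $x'$. One then wants to interchange this with the invariants: $\bigl((\bC[W] \otimes_{\cS(\fpp)} A)^{K_\bC}\bigr) \otimes_{\bC[\bcOp]} \bC_{x'}$ should equal $\bigl((\bC[W] \otimes_{\cS(\fpp)} A) \otimes_{\bC[\bcOp]} \bC_{x'}\bigr)^{K_\bC}$ — this requires that $K_\bC$ is reductive (so invariants are exact) together with a flatness/Tor-vanishing input, or alternatively an argument via the slice at $x'$. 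After the interchange, $\bC[W] \otimes_{\bC[\bcOp]} \bC_{x'} = \bC[\phi'^{-1}(x')]$, so the fiber becomes $(\bC[\phi'^{-1}(x')] \otimes_{\cS(\fpp)} A)^{K_\bC}$; now restricting the $\cS(\fpp)$-action through the point $x = \phi(w)$ (legitimate because on the fiber $\phi'^{-1}(x')$ the map $\phi$ lands in a single $K_\bC$-orbit-worth of points, and $K_\bC$-invariance lets us reduce to the stabilizer) one gets $(\bC[\phi'^{-1}(x')]_x \otimes i_x^*\sA)^{\text{something}}$, and the geometry of the moment maps — the fact that $\phi^{-1}(x) \cap \phi'^{-1}(x')$ is a single orbit under $K_\bC \times K'_\bC$ with the homomorphism $\alpha\colon K'_{x'} \to K_x$ recording how stabilizers compare (this is precisely \eqref{eq:salpha}) — collapses the invariants to $\alpha^*(i_x^* \sA)$.

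The main obstacle I expect is the interchange of $K_\bC$-invariants with the fiber functor $- \otimes_{\bC[\bcOp]} \bC_{x'}$, i.e.\ the base-change step. Taking $K_\bC$-invariants of a module is exact (reductivity), but $\bC[\bcOp]$ is generally singular at $x'$, so $\bC_{x'}$ is far from flat over $\bC[\bcOp]$ and there could be $\mathrm{Tor}$ obstructions; one needs to know that the relevant higher $\mathrm{Tor}$'s either vanish after taking invariants or are killed by the orbit geometry. The cleanest route around this is probably to replace the naive fiber by a Luna-type étale slice: work over a transverse slice $S$ to $\cO'$ at $x'$, so that $\bC[W]|_{\text{slice}}$ becomes, $K'_{x'}$-equivariantly, a product of the fiber $\phi'^{-1}(x')$ with the slice, and then the invariant-theoretic construction decouples cleanly. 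Alternatively — and this may be what the authors do — one uses that $\bfA$ and $\bfB$ come from \emph{good} filtrations hence are Cohen--Macaulay-ish enough, or one simply argues the dimension equality $\dim i_{x'}^*\sB = \dim i_x^*\sA$ directly from a multiplicity/length count using Lemma~\ref{L3} fiberwise, and upgrades to an honest isomorphism of $K'_{x'}$-modules by tracking the $\alpha$-twisted action on generators. The dimension statement at the end then follows immediately once $A$ (hence $i_x^*\sA$) is finite dimensional, since $i_{x'}^*\sB \cong \alpha^*(i_x^*\sA)$ has the same underlying vector space.
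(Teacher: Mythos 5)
Your overall plan coincides with the paper's: reduce $i_{x'}^*\sB$ to $K_\bC$-invariants of the restriction of $A$ pulled back to the fiber $\phi'^{-1}(x')$, then use that this fiber is a single orbit with stabilizer comparison $\alpha\colon K'_{x'}\to K_x$ to collapse the invariants to $\alpha^*(i_x^*\sA)$. However, the step you yourself flag as the main obstacle --- commuting $-\otimes_{\bC[\bcOp]}\bC_{x'}$ with $K_\bC$-invariants --- is exactly the point you leave unproved, and your proposed remedies (Tor-vanishing, Luna slices, Cohen--Macaulayness of $\bfA$, or a dimension count upgraded ``by tracking the $\alpha$-twisted action'') are either not carried out or not available here (the lemma is stated for an arbitrary $(\bC[\bcO],K_\bC)$-module $A$, so no good-filtration or CM input can be invoked). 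The actual resolution is elementary and requires no flatness of $\bC_{x'}$ at all: the maximal ideal $m(x')\subset\cS(\fpp')$ acts on $M=\bC[Z]\otimes_{\bC[\bcO]}A$ through $\phi'^*(\cS(\fpp'))$, which consists of $K_\bC$-\emph{invariant} functions; hence $m(x')M$ is a $K_\bC$-submodule and, since taking invariants of a reductive group is exact (so it commutes with images of the equivariant surjection $\bigoplus_i M\to m(x')M$ given by the invariant generators), one gets $(m(x')M)^{K_\bC}=m(x')M^{K_\bC}$ and therefore $B/m(x')B\cong\bigl(M/m(x')M\bigr)^{K_\bC}=(\bC[Z_{x'}]\otimes_{\bC[\bcO]}A)^{K_\bC}$ directly. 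Missing this observation leaves the heart of the lemma unestablished.

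Two further points. First, your argument that $B$ is a $(\bC[\bcOp],K'_\bC)$-module is a support argument, which only shows that a power of $I(\bcOp)$ annihilates $B$; to get an honest $\bC[\bcOp]$-module structure you need $\phi'^*\bigl(I(\bcOp)\bigr)$ to lie in the \emph{extended} ideal $I(\bcO)\bC[W]$, i.e.\ the reducedness of the scheme-theoretic fiber $Z=W\times_{\fpp^*}\bcO$ (Lemma~\ref{lem:rednorm}, which rests on flatness of $\phi$ and reducedness of the null fiber) together with $\bC[\bcOp]=\bC[Z]^{K_\bC}$ (Lemma~\ref{L21}); this nontrivial geometric input is hidden in your phrase ``the ideal cutting out $\phi^{-1}(\bcO)$ \dots which annihilates the tensor product.'' Second, the final collapse should be made precise as in the paper: $Z_{x'}=\phi'^{-1}(x')$ is a single $K_\bC\times K'_{x'}$-orbit through $w$ with stabilizer $K_x\times_\alpha K'_{x'}$, so by the equivariant descent theorem of Cline--Parshall--Scott the sections of the pulled-back sheaf on $Z_{x'}$ are $\Ind_{K_x\times_\alpha K'_{x'}}^{K_\bC\times K'_{x'}}(i_x^*\sA)$, whose $K_\bC$-invariants are $\alpha^*(i_x^*\sA)$ by evaluation at the identity; also note the hypothesis is that $A$ is finitely generated (so $i_x^*\sA$ is finite dimensional), not that $A$ itself is finite dimensional.
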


\begin{proof}
  See \cite{Ma:thesis}. Let $Z = \phi^{-1}(\overline{\cO})$ be the set
  theoretical inverse image of $\overline{\cO}$.  We consider following diagram
\[
\xymatrix{
\set{x} \ar@{^{(}->}[dd]^{i_{x}} & 
\set{w} \ar[l]_{\simeq} \ar@{^{(}->}[d]^{i_w} \ar[dr]^{\simeq} &   \\
& Z_{x'}\ar@{^{(}->}[d]^{i_{Z_{x'}}} \ar[r] 
& \set{x'} \ar@{^{(}->}[d]^{i_{x'}} \\
\overline{\cO} \ar@{^{(}->}[d]^{i_{\overline{\cO}}} & 
Z \ar@{->>}[l]_{\phi|_Z} \ar[r]^{\phi'|_Z} \ar@{^{(}->}[d]^{i_Z} & 
\overline{\cO'}\ar@{^{(}->}[d]^{i_{\overline{\cO'}}}\\
  \fpp^* & W \ar[l]^{\phi} \ar[r]_{\phi'}&\fpp'^* }
\]
By Lemma~\ref{lem:rednorm}, the scheme theoretical inverse image $W
\times_{\fpp^*}\overline{\cO}$ is reduced, i.e.  $\bC[Z] = \bC[W]
\otimes_{S(\fpp )} \bC[\bcO]$. Then
\[
B = (\bC[W] \otimes_{\cS(\fpp )} A)^{K_\bC} =
(\bC[W] \otimes_{S(\fpp )} \bC[\bcO] \otimes_{\bC[\bcO]} A)^{K_\bC}
= (\bC[Z] \otimes_{\bC[\overline{\cO}]} A)^{K_\bC}
\]
as an $(\cS(\fpp'),K_\bC')$-module.  By Lemma~\ref{L21}, $\bC[\bcOp] =
\bC[Z]^{K_\bC}$ so $B$ is a $\bC[\bcOp]$-module.

We recall that $x'$ is a point in $\cO'$.  Let $Z_{x'} = Z
\times_{\bcOp} \set{x'}$ be the scheme theoretical fiber.  Since
$\phi'|_Z\colon Z\to \bcOp$ is dominant and we are in characteristic
zero, $Z_{x'}$ is reduced.  Let $m(x')$ be the maximal ideal in
$\cS(\fpp')$ corresponding to the point $x'$.

Since taking $K_\bC$-invariant is an exact functor and
$\phi'^*(\cS(\fpp'))$ is $K_\bC$-invariant, we have
\begin{eqnarray} 
  i_{x'}^*\sB  &=&(\cS(\fpp')/m(x')) \otimes_{\cS(\fpp')} 
  (\bC[Z]\otimes_{\bC[\overline{\cO}]} A)^{K_\bC} \nonumber \\
  & = & 
  (\bC[Z]\otimes_{\bC[\overline{\cO}]} A)^{K_\bC}/
  (m(x')\bC[Z]\otimes_{\bC[\overline{\cO}]} A)^{K_\bC} \nonumber \\
  & = &
  \left(\bC[Z]\otimes_{\bC[\overline{\cO}]} A)/
    (m(x') \bC[Z]\otimes_{\bC[\overline{\cO}]} A) \right)^{K_\bC} \nonumber \\
  & = & \left( (\bC[Z]/m(x')\bC[Z]) \otimes_{\bC[\overline{\cO}]} A
\right)^{K_\bC} \nonumber \\
  & = &(\bC[Z_{x'}]\otimes_{\bC[\overline{\cO}]} A)^{K_\bC}. \label{eq15}
\end{eqnarray}

Let $\sZ := i_{Z_{x'}}^*(\phi|_Z)^* \sA$. Then $\sZ(Z_{x'}) =
\bC[Z_{x'}]\otimes_{\bC[\overline{\cO}]} A$.  By
Lemma~\ref{lem:fullrank} (ii), $Z_{x'}$ is a $K_\bC \times K_{x'}$-orbit
generated by $w$. Let $S_w = \Stab_{K_\bC \times K'_{x'}}(w)$. By
\eqref{eq:stab}, $S_w = K_x \times_\alpha K'_{x'} = \{ (\alpha(k'),k')
\in K_x\times K'_{x'} \}$.  Then by Theorem~2.7 in \cite{CPS},
\[
\sZ(Z_{x'})=
\Ind_{K_x\times_\alpha K'_{x'}}^{K_\bC \times K'_{x'}} \chi
\]
where $\chi$ is the fiber of $\sZ$ at $w$. By the above commutative
diagram, we have $S_w$-module isomorphisms
\[
\chi = i_w^*\sZ = i_w^* i_{Z_{x'}}^* (\phi|_Z)^* \sA  \simeq i_{x}^*\sA
\]
where $(\alpha(k'), k') \in S_w = K_x \times_\alpha K_{x'}'$ acts on
$i_x^*\sA$ via the natural action of $\alpha(k')$ on
$i_x^*\sA$.\footnote{Let $m(w)$ be the maximal ideal of $\bC[Z]$ corresponding
to $w$ and let $m(x)$ be the maximal ideal of $\bC[\bcO]$ corresponding to $x$.
Then the map  $\phi : w \mapsto x$ gives a $\bC[\bcO]$-algebra isomorphism: $L :
  \xymatrix{\bC[\bcO]/m(x) \ar[r]^{\simeq \ \ }& \bC[Z]/m(w) = \bC}$. The
  group $S_w$ acts on the right hand side while the group $K_{x}$ acts
  on the left hand side.  These two actions are compatible in the
  sense that for $(\alpha(k'),k') \in S_w \subset K_x \times
  K_{x'}'$, we have $L \circ \alpha(k') = (\alpha(k'),k') \circ L$.

  Similarly $K_\bC'$ acts on $\bC[Z]\otimes_{\bC[\bcO]} A$ via
  translation on $\bC[Z]$ while $K_\bC$ acts via the tensor product of
  its action on $A$ and the translation action on $\bC[Z]$. Then $\chi
  = (\bC[Z]/m(w)) \otimes_{\bC[\bcO]} A \simeq (\bC[\bcO]/m(x))
  \otimes_{\bC[\bcO]} A = i_{x}^*\sA $.  Let $(\alpha(k'),k') \in
  S_w$.  Then it acts on the right hand side via its natural action of
  $\alpha(k')$ on $i_x^*\sA$.  }

Putting the above into \eqref{eq15} gives
\begin{equation} \label{eq16}
i_{x'}^*\sB =(\sZ(Z_{x'}))^{K_\bC}  =
(\Ind_{K_x\times_\alpha K'_{x'}}^{K_\bC \times K'_{x'}} \chi)^{K_\bC}
\simeq \chi\circ \alpha
\end{equation}
as $K'_{x'}$-modules. Indeed if $f \in (\Ind_{K_x\times_\alpha
  K'_{x'}}^{K_\bC \times K'_{x'}} \chi)^{K_\bC}$, then $f : K_\bC
\times K'_{x'} \rightarrow V_\chi$ satisfies $f(k,k') =
\chi(\alpha(k'))f(\alpha(k')^{-1} k,1) = \chi(\alpha(k'))
f(1,1)$. Hence $f$ is uniquely determined by $f(1,1) \in V_\chi$. This
proves the isomorphism on the right in \eqref{eq16}.  It also
completes the proof of the lemma.
\end{proof}

\subsection{} Let $\rho' = \Theta(\rho)$, $\bfA = \varsigma|_{\wtK}
\otimes \Gr V_{\rho^*}$ and $\bfB = \varsigma|_{\wtK'} ^{-1} \otimes
\Gr\Theta(V_\rho)$ as before.  For a subset~$Z$ of~$\frakp^*$, we let
$I(Z)$ denote the ideal of $\cS(\frakp)$ vanishing on~$Z$.

\begin{prop} \label{P8} There is a finite filtration $0 = \bfA_0
  \subset \cdots \subset \bfA_l\subset \bfA_{l+1}\subset \cdots
  \subset \bfA_n = \bfA$ of $(\cS(\fpp),K_\bC)$-modules with the
  following property: For each $l$, there is a $K_\bC$-orbit $\cO_l$
  such that the annihilator ideal of $\bfA_{l}/\bfA_{l-1}$ in
  $\cS(\fpp)$ is the ideal $I(\overline{\cO_l})$.

  In particular $\bfA_{l}/\bfA_{l-1}$ is a
  $\bC[\overline{\cO_l}]$-module and $\bigcup_{l=1}^n \overline{\cO_l}
  = \AV(\rho^*)$.
\end{prop}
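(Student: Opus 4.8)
The plan is to build the filtration $\bfA_l$ by a standard \emph{prime filtration / filtration by support} argument for the finitely generated module $\bfA$ over the Noetherian ring $\cS(\fpp)$, and then to upgrade it to a $K_\bC$-equivariant filtration whose successive quotients are supported on \emph{single} $K_\bC$-orbit closures. First I would recall that $\bfA$ is a finitely generated $\cS(\fpp)$-module (from the good filtration on $\rho^*$, as noted in Section~\ref{S32}) and that it is $K_\bC$-equivariant (since $\rho$ is genuine). By general commutative algebra, a finitely generated module over a Noetherian ring admits a finite filtration whose graded pieces are of the form $\cS(\fpp)/\fpp_i$ for prime ideals $\fpp_i$; moreover each $\fpp_i$ contains the annihilator of $\bfA$, hence lies in the variety $\AV(\rho^*) = \Supp(\sA)$, and the maximal ones among the $\fpp_i$ are exactly the minimal primes of $\Supp(\bfA)$.

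The key extra input is equivariance. Since $K_\bC$ acts on $\bfA$ compatibly with its action on $\cS(\fpp)$, the associated primes of $\bfA$ and more generally the set $\Supp(\bfA)$ are $K_\bC$-stable closed subsets of $\fpp^*$. Because $\AV(\rho^*)$ is a finite union of $K_\bC$-orbit closures $\bigcup_j \overline{\cO_j}$ (it is contained in the null cone $N(\fpp^*)$, which has finitely many $K_\bC$-orbits), the irreducible components of $\Supp(\bfA)$ are among the $\overline{\cO_j}$, and every irreducible $K_\bC$-stable closed subset of $\fpp^*$ is such an orbit closure. So I would refine the prime filtration as follows: order the orbit closures appearing in $\AV(\rho^*)$ compatibly with the closure partial order, and successively peel off, for each orbit $\cO$ occurring, the submodule of sections supported on $\overline{\cO} \cup (\text{smaller closures})$; equivalently, localize at the generic point of each $\cO$ to read off multiplicities. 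Concretely, one can run the argument in \cite[Lemma 2.11]{Vo89} — which the excerpt already invokes to produce a filtration generically reduced on each $\overline{\cO_j}$ — but here I only need the weaker conclusion that each graded piece is annihilated \emph{exactly} by $I(\overline{\cO_l})$ for a single orbit $\cO_l$; this is obtained by further decomposing the generically-reduced pieces so that each one has irreducible support. The equivariance is preserved at every stage because all the operations (taking the submodule supported on a closed $K_\bC$-stable set, taking quotients, taking associated-prime torsion submodules) commute with the $K_\bC$-action.

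The one point requiring care — the main obstacle — is arranging that each quotient $\bfA_l/\bfA_{l-1}$ has annihilator \emph{equal} to $I(\overline{\cO_l})$ rather than merely contained in it with support possibly a proper subvariety. The containment $\Ann(\bfA_l/\bfA_{l-1}) \supseteq I(\overline{\cO_l})$ is automatic once the piece is a $\bC[\overline{\cO_l}]$-module; the reverse inclusion (no extra functions vanish) is the assertion that the piece is generically nonzero along $\cO_l$, i.e.\ has full support $\overline{\cO_l}$. This is achieved by choosing the filtration step to be \emph{torsion-free} over $\bC[\overline{\cO_l}]$: inside any $K_\bC$-equivariant $\bC[\overline{\cO_l}]$-module one takes the maximal submodule supported on the boundary $\partial\overline{\cO_l}$, and the quotient by it is torsion-free, hence has annihilator exactly $I(\overline{\cO_l})$; the submodule one peeled off is then supported on strictly smaller orbit closures and one recurses. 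Since $\AV(\rho^*)$ has finitely many orbits and each recursion step strictly decreases the dimension or the number of top-dimensional components of the support, the process terminates, yielding the finite filtration with the stated property. Finally $\bigcup_l \overline{\cO_l} = \AV(\rho^*)$ because the union of the supports of the graded pieces equals the support of $\bfA$, which is $\AV(\rho^*) = \Supp(\sA)$ by definition.
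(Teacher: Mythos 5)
Your overall strategy (equivariant refinement of a prime filtration, using that the support lies in the null cone, which has only finitely many $K_\bC$-orbits) is in the right family, but the step that actually carries the content of the proposition is missing. The proposition asks for each graded piece to be annihilated \emph{exactly} by the radical ideal $I(\overline{\cO_l})$ of a single $K_\bC$-orbit closure, and neither of the two devices you offer produces this. First, ``further decomposing the generically-reduced pieces so that each one has irreducible support'' is not possible in general: when $K_\bC$ is disconnected, the closure $\overline{\cO_l}$ of a single $K_\bC$-orbit is typically reducible, its irreducible components are permuted by $K_\bC/K_0$, and a $K_\bC$-stable subquotient supported on one component must be supported on all of them (already $\bC[\overline{\cO}]$ for such an orbit admits no equivariant filtration with irreducible supports). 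The paper's remark after the proposition explicitly allows the $\cO_l$ to be disconnected, so irreducibility is both unattainable and not what is needed. Second, the boundary-torsion peeling only yields $\Ann = I(\overline{\cO_l})$ \emph{after} you already know the piece is a $\bC[\overline{\cO_l}]$-module, i.e.\ is killed by the radical ideal; for a general piece this fails and the peeling does not repair it. Concretely, for $M=\cS(\fpp)/I(\overline{\cO})^2$ the submodule of sections supported on $\partial\overline{\cO}$ (or on any proper closed subset) can be zero, and the quotient still has the non-radical annihilator $I(\overline{\cO})^2$. So the reduction to radical annihilators of single orbit closures---which is precisely the assertion being proved---is never established; the appeal to Lemma~2.11 of \cite{Vo89} is also close to circular, since the proposition is an equivariant sharpening of that lemma (the paper says its proof ``essentially follows'' it, it does not quote it as a black box).

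The paper's own argument supplies exactly the missing mechanism, and is quite short: choose $a\in\bfA$ whose annihilator $\cqq=\Ann_{\cS(\fpp)}(a)$ is an isolated (minimal associated) prime of $\bfA$. Since the isolated primes form a finite set permuted by $K_\bC$, the connected group $K_0$ fixes $\cqq$, so $V(\cqq)$ is an irreducible $K_0$-stable subset of the null cone, hence the closure of a single $K_0$-orbit $\cO_0$. Setting $\bfA_1=\cS(\fpp)K_\bC a$, one computes $\Ann(\bfA_1)=\bigcap_{[k]\in K_\bC/K_0}[k]\cdot\cqq = I\bigl(\overline{K_\bC\cO_0}\bigr)$, a finite intersection of primes, so the annihilator is automatically the radical ideal of a single $K_\bC$-orbit closure---no separate ``reducedness'' step is needed because the submodule is generated by translates of one element whose annihilator is already prime. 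Iterating on $\bfA/\bfA_{l-1}$ and using the ascending chain condition gives termination. If you want to salvage your route instead, you would need an explicit preliminary step forcing radical annihilators (e.g.\ first filtering each piece by the powers $I(\Supp)^j$ and then peeling off, at each stage, the submodule of sections supported off a chosen open orbit), together with a Noetherian induction on the set of orbits in the support; as written, the proposal does not contain such a step.
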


\subsection*{Remark} We warn that the orbit $\cO_l$ may not be
connected since $K_\bC$ may not be connected. Furthermore $\cO_l$ may
not be an open orbit in $\AV(\rho^*)$.

\begin{proof}
The proof essentially follows that of Lemma 2.11 in \cite{Vo89}.

Let $K_0$ be the connected component of $K_\bC$. The set of associated
primes of $\bfA$ is finite.  The connected group $K_0$ acts trivially
on this finite set of associated primes.

Let $a \in \bfA$ such that its annihilator ideal $\cqq = \Ann_{\cS(\fpp)}(a)$ is a minimal associated prime of
$\bfA$.  Let $\bfA_1 = \cS(\fpp ) K_\bC a$ be the $(\cS(\fpp),K_\bC)$-submodule in $\bfA$ generated by $a$.  Let
$\rV(\cqq)$ be the subset of $\frakp^*$ cut out by $\cqq$. Since
$\rV(\cqq)$ is irreducible and $K_0$-invariant, it is the closure of
single $K_0$-orbit $\cO_0$.  Let $\cO_1 = K_\bC \cO_0$.

We claim that $\Ann_{\cS(\fpp )}(\bfA_1) = I(\overline{\cO_1})$.
Indeed,
\begin{eqnarray*}
\Ann_{\cS(\fpp )}(\bfA_1) & = & 
  \bigcap_{k \in K_\bC} \Ann_{\cS(\fpp )} (k \cdot a) =
  \bigcap_{k \in K_\bC} k \cdot \cqq = \bigcap_{[k] \in K_\bC/K_0}
  [k]\cdot \cqq \\ & = & I(\bigcup_{[k]\in K_\bC/K_0}[k]\cdot
  \rV(\cqq)) = I(\overline{\cO_1}).
\end{eqnarray*}
The above second last equality holds because $\bigcap_{[k] \in
  K_\bC/K_0} [k]\cdot \cqq$ is a finite intersection of prime
ideals. The last equality holds by the definition of $\cO_1$. This
proves our claim.

\smallskip

Now, we could construct $\bfA_l$ and $\cO_l$ inductively by applying
the above construction to the $(\cS(\fpp ), K_\bC)$-module
$\bfA/\bfA_{l-1}$.  This procedure will eventually stop because $\bfA$
is a finitely generated module over the Noetherian ring $\cS(\fpp )$.
\end{proof}

Let $\bfA_l$ be as in Proposition \ref{P8} and let $\bfA^l = \bfA_l/\bfA_{l-1}$. It is a finitely generated
$\bC[\overline{\cO_l}]$-module and we let $\sA^l$ be its associated
coherent sheaf on $\overline{\cO_l}$.

By Lemma \ref{L3}, $\bfB = (\bC[W]\otimes_{\cS(\fpp )}
  \bfA )^{K_\bC}$.  Let $\bfB_l = (\bC[W]\otimes_{\cS(\fpp )}
  \bfA_l)^{K_\bC}$. Since $\phi$ is flat by Theorem~\ref{thm:Kos} and
  taking $K_\bC$-invariants is exact, we may identify $\bfB_l$ with a
  submodule of~$\bfB$. Hence $\bfB_l$ is an
  $(\cS(\fpp'),K_\bC')$-equivariant filtration of $\bfB$.  We set
  $\bfB^l = \bfB_{l}/\bfB_{l-1}$. Then
\[
\bfB^l = \bfB_{l}/\bfB_{l-1} = (\bC[W]\otimes_{\cS(\fpp)} (\bfA_{l} /
\bfA_{l-1}))^{K_\bC} = (\bC[W]\otimes_{\cS(\fpp)} \bfA^l)^{K_\bC}.
\]

\subsection{}
We define a partial ordering on the $K_\bC$-orbits by containments in
the Zariski closures.  Let $\{ \cO_{l_1}, \ldots, \cO_{l_r} \}$ be the
set of (distinct) maximal nilpotent $K_\bC$-orbits appearing in
Proposition \ref{P8}.  For each $\cO_{l_j}$ in this set, we fix a
closed point $x_j\in \cO_{l_j}$ and define  the 
$K_{x_j}$-module
\begin{equation}\label{eq:chigrA}
\chi(x_j,\Gr\sA) = \bigoplus_{\cO_{l_j} = \cO_l} i_{x_j}^*\sA^l.
\end{equation}

Let $m_j= \dim_\bC \chi(x_j,\Gr\sA)$. The integer $m_j$ is independent
of the choice of $x_j \in \cO_{l_j}$. Moreover $m_j\neq 0$.  Indeed
all $K_{x_j}$-modules on the right hand side of \eqref{eq:chigrA} are
non-zero because $\Supp(\sA^l) = \rV(\Ann_{\cS(\fpp)} \bfA^l) =
\overline{\cO_{l}} = \overline{\cO_{l_j}}$.

Recall that $\bfA = \varsigma|_{\wtK}\otimes \Gr V_{\rho^*}$. Let 
\[
\chi_{x_j} = \varsigma|_{\wtK}^{-1} \otimes \chi(x_j,\Gr\sA).
\]
Then $\set{(\cO_{l_j},x_j, \chi_{x_j})}$ is the set of orbit data
attached to the filtrations given by Proposition~\ref{P8}.

Now the
associated cycle of $\rho^*$ is 
\[
\AC(\rho^*) = \AC(\bfA) = \sum_{j=1}^r m_j [\overline{\cO_{l_j}}].
\]
and the associated variety is $\AV(\rho^*) = \bigcup_{j=1}^r
\overline{\cO_{l_j}}$.

\subsection{}{\it Proof of Theorems \ref{TC} and
  \ref{TD}.} \label{S44b}
First we observe following lemma. 

\begin{lemma}\label{lem:cpxorb}
  Let $\set{\cO_{l_j} : j = 1, \ldots, r}$ be the set of all distinct
  (open) maximal $K_\bC$-orbits in~$\AV(\rho^*)$. Then
  $\set{\theta(\cO_{l_j}) : j = 1, \ldots, r}$ forms the set of all
  distinct (open) maximal $K'_\bC$-orbits in $\theta(\AV(\rho^*))$.
\end{lemma}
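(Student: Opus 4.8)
The plan is to reduce the statement to the formal properties of the nilpotent orbit correspondence recorded in Theorem~\ref{thm:TLorb}, the flatness of $\phi$ (Theorem~\ref{thm:Kos}), and the equidimensionality of the associated variety of an irreducible Harish--Chandra module. First, since $\AV(\rho^*)=\bigcup_{j=1}^{r}\overline{\cO_{l_j}}$ lies in the null cone and $\phi^{-1}$, $\phi'$ commute with finite unions,
\[
\theta(\AV(\rho^*))=\bigcup_{j=1}^{r}\phi'\bigl(\phi^{-1}(\overline{\cO_{l_j}})\bigr)=\bigcup_{j=1}^{r}\theta(\overline{\cO_{l_j}})=\bigcup_{j=1}^{r}\overline{\theta(\cO_{l_j})},
\]
using $\theta(\overline{\cO})=\overline{\theta(\cO)}$. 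Distinctness of $\theta(\cO_{l_1}),\dots,\theta(\cO_{l_r})$ is then immediate from the injectivity of $\theta$ on $\frakN_{K_\bC}(\fpp^*)$.

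The heart of the argument is to show that no $\overline{\theta(\cO_{l_j})}$ is contained in another, so that these are precisely the irreducible components of $\theta(\AV(\rho^*))$ and the $\theta(\cO_{l_j})$ are their unique open dense orbits. I would argue by dimension. Since $\rho^*$ is irreducible, $\AV(\rho^*)$ is equidimensional, so all the maximal orbits $\cO_{l_1},\dots,\cO_{l_r}$ share a common dimension $d$. Moreover, in the stable range $\dim\theta(\cO)=\dim\cO+(\dim W-\dim\fgg)$: by flatness of $\phi$ the variety $\phi^{-1}(\overline{\cO})$ is pure of dimension $\dim\cO+(\dim W-\dim\fpp)$, while by Corollary~\ref{cor:alpha} the fibre of $\phi'$ over a point of $\theta(\cO)$ inside $\phi^{-1}(\overline{\cO})$ is a single $K_\bC\times K'_{x'}$-orbit of dimension $\dim K_\bC=\dim\fkk$ (this constant jump is also the content of the dimension formula of \cite{Daszkiewicz:2005}). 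Hence all the $\overline{\theta(\cO_{l_j})}$ have one and the same dimension $d'$. If $\overline{\theta(\cO_{l_j})}\subseteq\overline{\theta(\cO_{l_k})}$ for some $j\neq k$, then two nested irreducible varieties of equal dimension must coincide, forcing $\theta(\cO_{l_j})=\theta(\cO_{l_k})$ (the unique open dense orbit of the common closure), contrary to distinctness. Therefore the $\overline{\theta(\cO_{l_j})}$ are exactly the irreducible components of $\theta(\AV(\rho^*))$; each $\theta(\cO_{l_j})$ is open in $\theta(\AV(\rho^*))$, and every other $K'_\bC$-orbit there lies in the boundary of some component, hence has dimension $<d'$ and is strictly dominated by the corresponding $\theta(\cO_{l_k})$. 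This yields precisely the asserted description of the maximal orbits.

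It remains to identify $d'$ with $\dim\AV(\Theta(\rho))$, as announced in the introduction. For each $j$ fix $x_j\in\cO_{l_j}$, a vector $w_j\in W$ with $\phi(w_j)=x_j$, and $x'_j=\phi'(w_j)\in\theta(\cO_{l_j})$; applying Lemma~\ref{lem:isotropy1} to each $\bfA^l$ with $\cO_l=\cO_{l_j}$ gives $i_{x'_j}^*\sB^l\cong\alpha^*(i_{x_j}^*\sA^l)\neq 0$ (non-zero because $\supp(\sA^l)=\overline{\cO_{l_j}}$ contains $x_j$). Since $\bfB^l$ is a subquotient of $\bfB$, this puts $x'_j$ in $\Supp\sB=\AV(\Theta(\rho))$, and by $K'_\bC$-invariance and closedness $\overline{\theta(\cO_{l_j})}\subseteq\AV(\Theta(\rho))$ for every $j$; combining with $\AV(\Theta(\rho))\subseteq\theta(\AV(\rho^*))$ from Theorem~\ref{TB} we get $\AV(\Theta(\rho))=\theta(\AV(\rho^*))$, which has dimension $d'$. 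The step I expect to be the real obstacle is the dimension input of the second paragraph: both the equidimensionality of $\AV(\rho^*)$ and the constancy of the dimension jump under $\theta$ are genuinely needed, since otherwise a lower-dimensional $\theta(\cO_{l_j})$ could be absorbed into the closure of a higher-dimensional one and the lemma would fail; thus the proof rests on the geometry of the moment maps in the stable range rather than on purely formal manipulations with $\theta$.
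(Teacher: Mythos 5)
Your proof is correct, but it reaches the crucial step by a genuinely different route from the paper's. Both arguments reduce the lemma to showing that all the lifted orbits $\theta(\cO_{l_j})$ have one common dimension (distinctness being immediate from injectivity of $\theta$, and equal dimension ruling out containments among the closures). The paper gets this by citing Theorem~8.4 of \cite{Vo89} — all maximal $K_\bC$-orbits in $\AV(\rho^*)$ generate a single complex nilpotent orbit $\cO_\bC$, hence all have dimension $\tfrac12\dim_\bC\cO_\bC$ — together with Ohta's signed-Young-diagram description of $\theta$ on orbits (the lift adds a column), which forces all the $\theta(\cO_{l_j})$ to generate a single complex orbit $\cO'_\bC$ of dimension $\tfrac12\dim_\bC\cO'_\bC$; as a byproduct it records $\overline{\cO'_\bC}=\theta_\bC(\overline{\cO_\bC})$, which the paper reuses in the proof of Corollary~\ref{CE}. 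You instead extract the equal dimensions from the moment-map geometry already in the appendix: flatness of $\phi$ (Theorem~\ref{thm:Kos}) makes $\phi^{-1}(\overline{\cO})$ pure of dimension $\dim\cO+\dim W-\dim\fpp$, and the free $K_\bC$-action on the fibres $\phi'^{-1}(x')$ (Corollary~\ref{cor:alpha}) then gives the uniform shift $\dim\theta(\cO)=\dim\cO+\dim W-\dim\fgg$; to make this airtight one should note that $\phi'^{-1}(\cO')=\phi^{-1}(\cO)\cap W^\circ$ is open in $\phi^{-1}(\overline{\cO})$, hence of full dimension by purity, and fibres over $\cO'$ have dimension $\dim\fkk$. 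Your remaining input, equidimensionality of $\AV(\rho^*)$ for irreducible $\rho^*$, is exactly the Vogan Theorem~8.4 fact the paper invokes, so you should cite it rather than assert it. Your approach is more self-contained and avoids the classification by signed Young diagrams, while the paper's approach buys the extra complex-orbit identity needed later; your final paragraph identifying the common dimension with $\dim\AV(\Theta(\rho))$ is not needed for the lemma and duplicates the subsequent proofs of Theorems~\ref{TC} and~\ref{TD}.
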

\begin{proof} By Theorem~\ref{thm:TLorb}(i) the map $\theta\colon
  \frakN_{K_\bC}(\fpp^*)\to \frakN_{K'_\bC}(\fpp'^*)$ is injective so
  all the $\theta(\cO_{l_j})$'s are distinct. We also have
  $\theta(\AV(\rho^*)) = \phi'(\phi^{-1}(\bigcup_{j=1}^r
  \overline{\cO_{l_j}})) = \bigcup_{j=1}^r
  \theta(\overline{\cO_{l_j}}) = \bigcup_{j=1}^r
  \overline{\theta(\cO_{l_j})}$. It suffices to show that $\dim
  \theta(\cO_{l_j}) = \dim \theta(\AV(\rho^*))$.

  By Theorem~8.4 in \cite{Vo89}, every $K_\bC$-orbit $\cO_{l_j}$
  generates the same $G_\bC$-orbit $\cO_\bC$ in~$\fgg^*$.  Indeed
  $\overline{\cO_\bC}$ is the variety cut out by
  $\Gr(\Ann_{\cU(\fgg)}(\rho^*))$.

  Nilpotent $K_\bC$-orbits for classical groups are parametrized by
  signed Young diagrams.  In particular the underlying Young diagrams
  of different $\cO_{l_j}$'s are the same and they have the same
  dimension equals to $\frac{1}{2} \dim_\bC \cO_\bC$.  By
  \cite{Ohta:1991C}, the signed Young diagram of the
  orbit~$\theta(\cO_l)$ is obtained by adding a column to the signed
  Young diagram of $\cO_{l_j}$. Hence every $K_\bC'$-orbit
  $\theta(\cO_{l_j})$ generates the same $G_\bC'$-orbit $\cO'_\bC$
  in~$\fgg'^*$ and its dimension is $\frac{1}{2} \dim_\bC \cO'_\bC$.
  This proves that $\dim \theta(\cO_{l_j}) = \dim\theta(\AV(\rho^*))$
  and completes the proof of the lemma. In fact $\overline{\cO_\bC'} =
  \theta_\bC(\overline{\cO_\bC})$ where $\theta_\bC$ was defined
  after~\eqref{eq2a} (see \cite{DKP}, \cite{Daszkiewicz:2005} or
  \cite{KP}).
\end{proof}

Let $\cO'_{l} = \theta(\cO_l)$.  By Theorem \ref{TB}, $\AV(\rho')
\subseteq \bigcup_{j = 1}^r \theta(\overline{\cO_{l_j}}) =
\bigcup_{j=1}^r \overline{\cO'_{l_j}}$.

Let $\sB_l$ and $\sB^l$ be the associated coherent sheaves of $\bfB_l$
and $\bfB^l$ respectively. Now we apply Lemma~\ref{lem:isotropy1} to
$\bfB^l$ and we have
\[
\chi(x'_j, \Gr\sB) := \bigoplus_{\cO'_l = \cO'_{l_j}} i_{x'_j}^*\sB^l
\simeq \bigoplus_{\cO_l = \cO_{l_j}} \alpha_j^*(i_{x_j}^*\sA^l)
\]
where $x'_j$ and $\alpha_j$ are $x'$ and $\alpha$ respectively in
Lemma~\ref{lem:isotropy1}.

Since $\bfA = \varsigma|_{\wtK} \otimes \Gr V_{\rho^*}$ and $\bfB =
\varsigma|_{\wtK'} ^{-1} \otimes \Gr\Theta(V_\rho)$, the isotropy
representation of $\Theta(\rho)$ at $x'_j$ with respect to the
filtration $\sB_l$ is
\[
\chi_{x'_j} = \varsigma|_{\wtK'} \otimes \chi(x'_j, \Gr\sB) = 
\varsigma|_{\wtK'}\otimes \chi(x_j,\Gr \sA)\circ \alpha_j = 
\varsigma|_{\wtK'}\otimes  (\varsigma|_{\wtK}\otimes \chi_{x_j})\circ \alpha_j.
\] 
In particular, $\chi_{x'_j} \neq 0$ since $\chi_{x_j}\neq 0$.
Therefore $\{(\cO'_{l_j}, x'_j, \chi_{x'_j}) : j = 1, \ldots, r \}$
forms the set of orbit data attached to the filtration $\sB_l$.  This
proves Theorem~\ref{TC}.

Now 
\[
\AC(\Theta(\rho)) = \sum_{j = 1}^r (\dim \chi_{x'_j})
[\overline{\cO'_{l_j}}] = \sum_{j = 1}^r m_j
[\overline{\theta(\cO_{l_j})}] = \theta(\AC(\rho^*)).
\]
This proves Theorem \ref{TD}. \qed

\medskip

The proof also shows that the theta lift of a Harish-Chandra module in
stable range is nonzero since $\chi_{x_j'} \neq 0$.

\subsection{} \label{S44} {\it Proof of Corollary \ref{CE}.}  We
recall $\rho' = \Theta(\rho)$.  From the proof of
Lemma~\ref{lem:cpxorb},
\[
\overline{G_\bC' \AV(\rho')} = \overline{\cO_\bC'} =
\theta_\bC(\overline{\cO_\bC}) = \theta_\bC(\overline{G_\bC
  \AV(\rho^*)}) = \theta_\bC(\VC(\rho^*)) = \theta_\bC(\VC(\rho)).
\]
The last equality follows from Proposition \ref{P24}.  Although
$\rho'$ may not be irreducible, we claim that $\VC(\rho') =
\overline{G_\bC' \AV(\rho')}$ and this would prove the
corollary. First $\VC$ is an additive map, i.e. $\VC(B) = \VC(A) \cup
\VC(C)$ for every exact sequence $0\to A\to B\to C\to 0$.  This is
well known to the experts (for example see Lemma 1.5 in \cite{Be}),
which follows by taking the graded version of
\[
\Ann_{\cU(\fgg')}(A)\Ann_{\cU(\fgg')}(C) \subseteq
\Ann_{\cU(\fgg')}(B) \subseteq \Ann_{\cU(\fgg')}(A)\cap
\Ann_{\cU(\fgg')}(C).
\]
Next let $\rho_1', \ldots, \rho_s'$ be all the
irreducible subquotients of the $(\fgg',\wtK')$-module $\rho'$ of
finite length.  Using Theorem~8.4 in \cite{Vo89} again,
\[
\VC(\Ann \rho') = \bigcup_{k = 1}^s \VC(\Ann \rho_k') =
\bigcup_{k = 1}^s \overline{G_\bC' \AV(\rho_k')} = \overline{G_\bC'
  \bigcup_{k = 1}^s \AV(\rho_k')} = \overline{G_\bC' \AV(\rho')}.
\]
This proves our claim and Corollary \ref{CE}.
\qed

\section{The $K$-spectrum equation} \label{S5}

In this section, we suppose $(G,G')$ is in the stable range with $G$
the smaller member excluding~\eqref{eq:ddagger}. We will also retain
the notation in the previous section. The objective of this section is
to prove Proposition \ref{P17} which implies Theorem \ref{TF}.

\medskip

\subsection{}
Let $x \in \cO$ and let $\chi_x$ be a finite dimensional rational
representation of $K_x$ as in Section~\ref{S13}. We recall Theorem~2.7
in \cite{CPS} that there is an equivalence of categories between the
category of rational representations of $K_x$ and the category of
certain $K_\bC$-equivariant sheaves on $\cO \simeq K/K_x$. Let $\sL$
be the $K_\bC$-equivariant sheaf on $\cO$ corresponding to
$\chi_x$. We assume that $\sL$ is generated by its global sections
(c.f.~\eqref{eq:condchi}). Let $i_\cO : \cO \rightarrow \fpp^*$ denote
the inclusion map and let $\sA = (i_\cO)_* \sL$. We also set
\begin{equation} \label{eq21}
A := \sA (\fpp^*) = \sL(\cO) = \Ind_{K_x}^{K_\bC} \chi_x
\end{equation}
as an $(\cS(\fpp), K_\bC)$-module. Clearly $A$ is a
$(\bC[\bcO],K_\bC)$-module.

Let $\cO' = \theta(\cO)$. We fix a $w \in W$ such that $x = \phi(w)
\in \cO$ and $x' = \phi'(w) \in \cO'$ in \eqref{eq2}. Let $\alpha
\colon K_{x'}' \to K_{x}$ be the map defined in Proposition
\ref{prop:alpha} in Appendix \ref{SB}. Let $\chi_{x'} = \chi_x\circ
\alpha$ be the representation of $K_{x'}'$. Let~$\sL'$ be the
$K_\bC'$-equivariant sheaf on $\cO'$ corresponding to~$\chi_{x'}'$. We
define the $(\cS(\fpp'),K'_\bC)$-module
\[
B = (\bC[W] \otimes_{\cS(\fpp )} A)^{K_\bC}.
\]
By Lemma \ref{lem:isotropy1}, $B$ is a $(\bC[\bcOp],K'_\bC)$-module.

\begin{prop} \label{P17} Suppose $(G,G')$ is in the stable range with
  $G$ the smaller member excluding~\eqref{eq:ddagger}.  Let $i_{\cO'}
  : \cO' \rightarrow \bcOp$ denote the inclusion map.  Then the sheaf
  $(i_{\cO'})_* \sL'$ is the coherent sheaf associated to the
  $(\bC[\bcOp],K_\bC')$-module $B$.
\end{prop}

\begin{proof}
  Let $Y := W\times_{\fpp } \cO$. By Lemma \ref{lem:rednorm}, $Y$ is a
  reduced scheme.  We consider following diagram where $Z^\circ =
  (\phi')^{-1}(\cO') \cap Y$.
\[
\xymatrix{
& Z^\circ \ar@{^(->}[d] \ar[r]^{\phi'} & \cO' \ar@{^(->}[d]^{i_{\cO'}}  \\
 \cO \ar@{^(->}[d]^{i_{\cO}}& Y \ar[l]_{\phi|_Y} \ar@{^(->}[d]^{i_Y}  \ar[r]^{\phi'|_Y} &  \bcOp \ar@{^(->}[d] \\
 \fpp^* & \ar[l]_{\phi} W\ar[r]^{\phi'} &\fpp'^*&
}
\]
Since $\phi$ is flat, Proposition 9.3 in Chapter 3 in \cite{HS} gives
\[
\phi^*(i_{\cO})_* \sL = (i_Y)_* (\phi|_Y)^* \sL
\]
as sheaves on $W$. Let $\sQ = (\phi|_Y)^* \sL$ and let $\sB$ be the
quasi-coherent sheaf on $\bcOp$ associated to $B$.  Note that $
((\phi'|_Y)_* \sQ(\bcOp))^{K_\bC} = (\phi'_* {i_Y}_*
\sQ(\fpp'^*))^{K_\bC} = B = \sB(\bcOp)$.  By the exactness of taking
$K_\bC$-invariants and the fact that $\bC[\bcOp]$ is
$K_\bC$-invariant, we have $\sB = ((\phi'|_Y)_*\sQ)^{K_\bC}$,
i.e. $\sB(U) = \left(\sQ((\phi'|_Y)^{-1}(U))\right)^{K_\bC}$ for every open set
$U\subset \bcOp$. 

\begin{lemma}
We have $\sQ(Y) = \sQ(Z^\circ)$.
\end{lemma}

\begin{proof}
The proof is similar to Theorem 4.4 in \cite{CPS}.
  
Since $\sL$ is locally free on $\cO$, $\sQ$ is locally free on
$Y$. Hence
\[
\depth \sQ_y = \depth \sO_{Y,y}\] for any $y\in Y$.  Let $\partial
Z^\circ = Y- Z^\circ$.  Let $H_{\partial Z^\circ}^i(\sQ)$
(resp. $\sH_{\partial Z^\circ}^i(\sQ)$) be the cohomology group
(resp. cohomology sheaf) of $Y$ with coefficient in $\sQ$ and
support in $\partial Z^\circ$.  By Lemma~\ref{lem:codim2} in
Appendix \ref{SB}, $\codim(Y, \partial Z^\circ) \geq 2$.  By Lemma
\ref{lem:rednorm}(ii), $Y$ is a normal scheme so it satisfies
 Serre's (S2) condition. Therefore
\[
\depth_y \sQ_y = \depth \sO_{Y,y} \geq \min\set{\dim\sO_{Y,y},2} = 2
\]  for all $y \in \partial  Z^\circ$.
By a vanishing theorem of Grothendieck (see \cite{GH}*{Theorem 3.8}),  
\[
\sH_{\partial  Z^\circ}^i(\sQ) = 0 \mbox{ \ for \  } i = 0, 1.
\] 
By Proposition~1.11 in \cite{GH}, a spectral sequence argument
implies $H^0(Y, \sQ) \simeq H^0(Z^\circ, \sQ)$ as required.
\end{proof}

We continue with the proof of Proposition \ref{P17}. By the above lemma
\[
B = \sB (\bcOp) = (\sQ(Y))^{K_\bC} = (\sQ(Z^\circ))^{K_\bC} =
\sB (\cO').
\]
By \eqref{eq16} we have
\[
\sB (\cO') = \Ind_{K'_{x'}}^{K_\bC'} (\chi_x \circ \alpha) =
(i_{\cO'})_* \sL'(\bcOp).
\]
Hence $\sB (\bcOp) = (i_{\cO'})_* \sL'(\bcOp)$. Since both $\sB$ and
$(i_{\cO'})_* \sL'$ are quasi-coherent sheaves over the affine scheme
$\bcOp$, $\sB = (i_{\cO'})_* \sL'$ and this completes the proof of
Proposition \ref{P17}.
\end{proof}

\subsection{} {\it Proof of Theorem \ref{TF}.} \label{S53} By
\eqref{eq21}, $A|_K = \varsigma|_{\wtK} \otimes V_{\rho^*}|_{\wtK}$ as
$K$-modules. Therefore
\[
\begin{array}{rcll}
  \varsigma|_{\wtK'}^{-1} \otimes \rho'|_{\wtK'} & = & 
  \varsigma|_{\wtK'}^{-1} \otimes \Gr(\rho')|_{\wtK'} = (
  \varsigma|_{\wtK} \otimes V_{\rho^*}|_{\wtK}
  \otimes \cH)^{K} &  \mbox{(by Proposition \ref{P4})} \\ 
  & = & (A|_K \otimes \cH)^{K} = B|_{K'}  &  \mbox{(by Proposition
    \ref{P4} again)} \\
  & = & (i_{\cO'})_* \sL'(\bcOp) & \mbox{(by Proposition \ref{P17})} \\
  & = & \sL'(\cO') =  \Ind_{K_{x'}'}^{K_\bC'} \left( (\varsigma|_{\wtK} \otimes
    \chi_x)\circ \alpha \right).
\end{array}
\]
Twisting the above equation by $\varsigma|_{\wtK'}$ proves Theorem
\ref{TF}. \qed

\section{Admissible data} \label{sec:adm} In this section we will show
that the theta lift of an admissible data is still an admissible
data. We continue to assume that $(G,G')$ is an irreducible type I
dual pair in the stable range where $G$ is the smaller member.

Let $\cO$ be a nilpotent $K_\bC$-orbit in $\fpp^*$ as in
\eqref{eq2}. Let $\cO' = \theta(\cO)$.  Let $w \in W$ such that $x =
\phi(w)\in \cO$ and $x' = \phi'(w)\in \cO'$. Let $\alpha\colon
K'_{x'}\to K_{x}$ be the map defined by
Proposition~\ref{prop:alpha}. Proposition \ref{PG} follows from the
next proposition.

\begin{prop} \label{prop:adm}
  Suppose $(G,G')$ is in the stable range where $G$ is the smaller
  member. Let~$\chi_{x}$ be an admissible representation of $\wtK_x$
  as defined in Section \ref{S16}. We set
\[
  \chi_{x'} : =  \varsigma|_{\wtK'_{x'}} \otimes (\varsigma|_{\wtK_x}\otimes
  \chi_x)\circ \alpha.
\] 
Then $\chi_{x'}$ is an admissible representation of $\wtK'_{x'}$.
\end{prop}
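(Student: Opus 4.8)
The statement to prove is that admissibility of $\chi_x$ for $\wtK_x$ transfers, under the theta-lift construction, to admissibility of $\chi_{x'}$ for $\wtK'_{x'}$. Admissibility is the infinitesimal condition \eqref{eq5}: a representation $\chi_y$ of $\wtK_y$ is admissible iff $\chi_y(\exp X) = \gamma_y(\exp(X/2))\cdot\Id$ for all $X\in\fkk_y$, where $\gamma_y(k)=\det(\Ad(k)|_{(\fkk/\fkk_y)^*})$. So the plan is: differentiate everything to the Lie-algebra level, and track the character through the two tensor factors that make up $\chi_{x'}$, namely $\varsigma|_{\wtK'_{x'}}$ and $(\varsigma|_{\wtK_x}\otimes\chi_x)\circ\alpha$. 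The key structural input is the map $\alpha\colon K'_{x'}\to K_x$ from \eqref{eq:salpha} and its behaviour on Lie algebras; I would let $d\alpha\colon\fkk'_{x'}\to\fkk_x$ denote its differential.

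\textbf{Step 1: reduce to a Lie-algebra identity.} Since all the groups in question are (possibly disconnected) algebraic groups and admissibility \eqref{eq5} is a statement about $\exp X$ for $X$ in the Lie algebra of the stabilizer, it suffices to verify, for all $Y\in\fkk'_{x'}$,
\[
d\chi_{x'}(Y) \;=\; \tfrac12\, d\gamma_{x'}(Y)\cdot\Id,
\]
where $d\gamma_{x'}(Y)=\operatorname{tr}\bigl(\ad(Y)|_{(\fkk'/\fkk'_{x'})^*}\bigr)=-\operatorname{tr}\bigl(\ad(Y)|_{\fkk'/\fkk'_{x'}}\bigr)$. Expanding the definition of $\chi_{x'}$, the left side is $d\varsigma|_{\fkk'_{x'}}(Y) + d\varsigma|_{\fkk_x}(d\alpha(Y)) + d\chi_x(d\alpha(Y))$. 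Now apply the hypothesis that $\chi_x$ is admissible: $d\chi_x(d\alpha(Y)) = \tfrac12 d\gamma_x(d\alpha(Y))\cdot\Id = \tfrac12\operatorname{tr}\bigl(\ad(d\alpha(Y))|_{(\fkk/\fkk_x)^*}\bigr)\cdot\Id$. So the whole problem becomes the single scalar identity
\[
d\varsigma|_{\fkk'_{x'}}(Y) + d\varsigma|_{\fkk_x}(d\alpha(Y)) + \tfrac12 d\gamma_x(d\alpha(Y))
\;=\; \tfrac12 d\gamma_{x'}(Y),
\qquad \forall\, Y\in\fkk'_{x'}.
\]

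\textbf{Step 2: identify each term geometrically.} I would compute all four traces as traces of the $\ad$-action of $Y$ (via $d\alpha$ where needed) on explicit vector spaces built from $W$. The character $\varsigma$ satisfies $\varsigma^{-2}(k)=\det(k|_W)$, so $2\,d\varsigma(Z) = -\operatorname{tr}(Z|_W)$ for $Z$ in the relevant compact Lie algebra; restricting along $K\hookrightarrow\rU$ (resp. $K'\hookrightarrow\rU$) and along $d\alpha$, the terms $d\varsigma|_{\fkk'_{x'}}(Y)$ and $d\varsigma|_{\fkk_x}(d\alpha(Y))$ become $-\tfrac12\operatorname{tr}(Y|_{?})$ for appropriate pieces of $W$ cut out by the moment-map geometry in Table~\ref{tab:noncpt1}. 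The terms $d\gamma_x$ and $d\gamma_{x'}$ are traces on $\fkk/\fkk_x$ and $\fkk'/\fkk'_{x'}$, i.e. on the tangent spaces of the orbits $\cO$ and $\cO'=\theta(\cO)$. The identity to be checked is then a bookkeeping statement: the difference of the tangent-space dimensions-with-$Y$-weights of $\cO'$ and $\cO$, suitably halved, equals the contribution of the $\varsigma$-determinant factors on the ambient symplectic space $W$. This is exactly the infinitesimal shadow of the fact (used throughout the paper) that $\bC[W]=\cS(\fpp)\otimes\cH$ in the stable range and that $Z_{x'}$ is a single $K_\bC\times K'_{x'}$-orbit through $w$ with stabilizer $K_x\times_\alpha K'_{x'}$ (Corollary~\ref{cor:alpha}); in particular $\fkk'/\fkk'_{x'}$, $\fkk/\fkk_x$, and the relevant quotient of $T_w W$ fit into one exact sequence of $\wtK'_{x'}$-modules, and taking $Y$-traces of that sequence is precisely the displayed identity.

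\textbf{The main obstacle.} The genuinely delicate point is making the exact sequence in Step 2 precise and $\wtK'_{x'}$-equivariant, i.e. pinning down exactly which subquotient of $T_w W\cong W$ one gets and checking the $\tfrac12$'s and signs match across all the dual-pair types in Table~\ref{tab:sstabncp} (real, complex, quaternionic, and complex-group cases), since the moment maps $\phi,\phi'$ and hence the geometry of $Z_{x'}$ look different in each row. I would handle this by working uniformly with the symplectic form on $W$ and the Howe-duality decomposition rather than case-by-case matrix computations: the stabilizer inclusion $K_x\times_\alpha K'_{x'}\hookrightarrow K_\bC\times K'_\bC$ acting on $W$ forces a self-dual (symplectic) pairing on the pieces, and the $\varsigma$-twist is exactly the half-determinant needed to make the total a genuine character condition — this is why the $\varsigma|_{\wtK_x}$ and $\varsigma|_{\wtK'_{x'}}$ factors appear in the definition of $\chi_{x'}$ in the first place. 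Once the exact sequence of $Y$-weighted spaces is written down, the claimed identity follows by additivity of trace, completing the proof by reversing Step~1.
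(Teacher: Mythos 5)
Your Step 1 is sound and coincides with the paper's own reduction: differentiating \eqref{eq5} (the paper instead squares the admissibility equation) turns the claim into exactly the scalar identity you display, which is the infinitesimal form of Lemma \ref{lem:tangent}, i.e. $\topform(\fkk'/\fkk'_{x'}) \cong (\topform(\fkk/\fkk_x)\circ\alpha)\otimes \varsigma|_{K'}^{-2}\otimes(\varsigma|_{K}^{-2}\circ\alpha)$ as $\fkk'_{x'}$-modules. The genuine gap is in Step 2, which is where all the content lies: you assert that $\fkk'/\fkk'_{x'}$, $\fkk/\fkk_x$ and a quotient of $\rT_wW$ "fit into one exact sequence" whose $Y$-trace is the desired identity, but no such single sequence exists, and your proposed uniform substitute (a self-dual symplectic pairing forced by the stabilizer inclusion, with $\varsigma$ as a half-determinant) is a heuristic rather than an argument — note that $\topform W$ enters the identity to the first power, not as a square, so no self-duality statement can produce it.

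What is actually needed, and what your sketch never supplies, are two exact sequences of $S_w\cong K'_{x'}$-modules through the tangent space $\rT_wE$ of the double orbit $E=K'_\bC K_\bC w$, fibered over $\cO'$ and over $\cO$, together with the submersion sequence $0\to\rT_wF\to\rT_wW\to\rT_x\fpp^*\to 0$, plus two triviality facts: (a) by Corollary \ref{cor:alpha}(i) the fiber $F'=\phi'^{-1}(x')$ is a single \emph{free} $K_\bC$-orbit, so $\rT_wF'\cong\fkk$ and its top exterior power is a trivial $\fkk'_{x'}$-module (you cite Corollary \ref{cor:alpha}, but only for the stabilizer description, not for this point); and (b) in the stable range $\phi$ is a submersion at every $w\in W$, so $\rT_wW/\rT_wF\cong\fpp^*$, again with trivial top exterior power — this is the only place the stable-range hypothesis enters, and your proposal never invokes it. These give $\topform\rT_{x'}\cO'\cong\topform\rT_x\cO\otimes\topform\rT_wF$ and $\topform\rT_wF\cong\topform W$, and only then does $\varsigma^{-2}(u)=\det(u|_W)$ convert $\topform W$ into the two $\varsigma^{-2}$ factors, completing the trace bookkeeping. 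Without (a) and (b) the identity you reduced to in Step 1 cannot be established, so the proof as proposed is incomplete at its crucial step.
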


\noindent{\it Proof.}
We have to verify that
\[
\chi_{x'}(\exp(X')) = \det(\Ad^*(\exp(X'/2))|_{(\fkk'/\fkk'_{x'})^*})
\qquad \forall X'\in \fkk'_{x'}.
\]
Since $\chi_{x}$ is admissible, it reduces to the following lemma after
taking square of above equation.

\begin{lemma}
As $\fkk'_{x'}$-modules,
\[
\topform (\fkk'/\fkk'_{x'}) \simeq (\topform (\fkk/\fkk_x) \circ
\alpha) \otimes \varsigma|_{K'}^{-2} \otimes (\varsigma|_K^{-2} \circ
\alpha).
\]
\end{lemma}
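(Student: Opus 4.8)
The plan is to prove the stated isomorphism of $\fkk'_{x'}$-modules by establishing the stronger statement that it already holds as an isomorphism of $K'_{x'}$-modules (and hence, after differentiating, of $\fkk'_{x'}$-modules); the computation preceding the lemma then yields Proposition~\ref{prop:adm}. The key idea is to transport every term of the identity to the point $w\in W$, where the tangent space $\rT_wW$ is canonically $W$ itself.

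The input is threefold. First, by \eqref{eq:stab} and Corollary~\ref{cor:alpha} the stabilizer $S_w:=\Stab_{K_\bC\times K'_{x'}}(w)$ is the graph $\{(\alpha(k'),k'):k'\in K'_{x'}\}$, so projection to the second factor identifies $S_w$ with $K'_{x'}$; since $S_w$ fixes $w$, the subspaces $\fkk\cdot w$, $\fkk_x\cdot w$, $\fkk'\cdot w$ and $\fkk'_{x'}\cdot w$ of $W$ are $S_w$-stable, hence $K'_{x'}$-modules. Second, $\phi$ is $K_\bC$-equivariant and $K'_\bC$-invariant, so differentiating at $w$ and restricting $d\phi_w$ to $\fkk\cdot w$ yields, on computing kernels, a $K'_{x'}$-module isomorphism $\fkk\cdot w/\fkk_x\cdot w\cong(\fkk/\fkk_x)\circ\alpha$, and symmetrically $d\phi'_w$ gives $\fkk'\cdot w/\fkk'_{x'}\cdot w\cong\fkk'/\fkk'_{x'}$. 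Third, since the $K_\bC$- and $K'_\bC$-actions on $W$ commute, $s=(\alpha(k'),k')\in S_w$ acts on $W$ by $\alpha(k')k'$, so $\det(s\mid W)=\varsigma^{-2}(\alpha(k'))\,\varsigma^{-2}(k')$; that is, $\topform W\cong\varsigma|_{K'}^{-2}\otimes(\varsigma|_K^{-2}\circ\alpha)$ as $K'_{x'}$-modules. Feeding these identifications into the target identity and using $\topform(A/B)\cong\topform A\otimes\topform B^{-1}$ for short exact sequences, the lemma reduces to the single isomorphism of $K'_{x'}$-modules
\[
\topform W\;\cong\;\topform\big(\fkk'\cdot w/\fkk'_{x'}\cdot w\big)\otimes\topform\big(\fkk\cdot w/\fkk_x\cdot w\big)^{-1}.
\]

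To prove this I would build a filtration of $W$ by $S_w$-submodules whose graded pieces are, up to modules of trivial determinant, $\fkk'\cdot w/\fkk'_{x'}\cdot w$ and the dual of $\fkk\cdot w/\fkk_x\cdot w$. The geometric input is the dual pair relation: the images of $\fgg$ and $\fgg'$ in $\fsp(W_\bR\otimes\bC)$ are mutual centralizers, which gives $\ker d\phi_w\supseteq\fkk'\cdot w$ and $\ker d\phi'_w\supseteq\fkk\cdot w$ and pins down the intersections $\fkk\cdot w\cap\fkk'\cdot w$ and $\fkk_x\cdot w\cap\fkk'_{x'}\cdot w$; in particular $\fkk'_{x'}\cdot w=d\alpha(\fkk'_{x'})\cdot w\subseteq\fkk_x\cdot w$, since $\Lie(S_w)=\{(d\alpha Y',Y')\}$ forces $d\alpha(Y')\cdot w=-Y'\cdot w$ on $\fkk'_{x'}$. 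One then reads off the graded pieces — $\fkk\cdot w$, a complement mapping isomorphically under $d\phi'_w$, and the remaining ``doubly normal'' directions — and checks that, owing to the twist by $\varsigma$, their determinants combine as claimed. Because $W$, $\phi$, $\phi'$ and $\alpha$ are given explicitly in Table~\ref{tab:noncpt1} and Appendix~\ref{SB}, this last step can, if a uniform argument is not transparent, be carried out family by family along Table~\ref{tab:sstabncp}.

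The main obstacle will be the bookkeeping of the non-compact directions. Unlike the moment map on $W_\bR$, the Fock-model space $W$ is a polarization of $W_\bR\otimes\bC$ and carries no $G$- or $G'$-invariant symplectic form, so one cannot directly invoke ``$\fgg\cdot w$ and $\fgg'\cdot w$ are symplectically orthogonal in $W$''; instead one must work in $W_\bR\otimes\bC=W\oplus\overline{W}$, or track separately the creation and annihilation parts of $\fpp$ and $\fpp'$ (which act on $\bC[W]$ by multiplication by, resp.\ contraction against, quadratics). The delicate point — and the reason the twist $\varsigma$ is introduced at all — is to see that the ``missing'' determinant is exactly $\det W$; some extra care is also needed near the small orbits, where the fibers $\phi^{-1}(\bcO)$ can become reducible (the phenomenon behind the exclusions \eqref{eq:ddagger}), although the identity of the lemma itself holds throughout the stable range.
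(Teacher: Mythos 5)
Your opening reduction is essentially the same first move as the paper's proof: pass to the point $w$ via the graph stabilizer $S_w\cong K'_{x'}$, record that $\det\bigl((\alpha(k'),k')|_W\bigr)=\varsigma^{-2}(\alpha(k'))\,\varsigma^{-2}(k')$, and reduce the lemma to the single identity $\topform W\cong\topform(\fkk'\cdot w/\fkk'_{x'}\cdot w)\otimes\topform(\fkk\cdot w/\fkk_x\cdot w)^{-1}$ of characters of $\fkk'_{x'}$. The problem is that this reduced identity \emph{is} the lemma, and at exactly that point your argument stops being a proof: you propose to ``build a filtration of $W$ whose graded pieces\dots combine as claimed,'' with a fallback of checking ``family by family'' from Table~\ref{tab:noncpt1}, but you neither construct the filtration, nor identify its graded pieces, nor carry out the case-by-case check. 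The mutual-centralizer property you invoke only yields the inclusions $\fkk'\cdot w\subseteq\ker d\phi_w$ and $\fkk\cdot w\subseteq\ker d\phi'_w$; it says nothing about the determinant of the ``doubly normal'' quotient $W/(\fkk\cdot w+\fkk'\cdot w)$, which is precisely where the factor $(\topform(\fkk/\fkk_x)\circ\alpha)^{-1}$ must come from.

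What is missing are the two stable-range facts about the chosen $w\in W^\circ$ on which any version of this bookkeeping rests, and which the paper takes from Corollary~\ref{cor:alpha} and Lemma~\ref{lem:fullrank}: (a) $\phi'^{-1}(x')$ is a single $K_\bC$-orbit with free $K_\bC$-action, so $\fkk\cdot w\cong\fkk$ carries an adjoint-type action and $\topform(\fkk\cdot w)$ is infinitesimally trivial; and (b) $\phi$ is submersive at $w$ with $\ker d\phi_w=\rT_w\phi^{-1}(x)=\fkk'\cdot w$, so that $W/(\fkk\cdot w+\fkk'\cdot w)\cong\fpp^*/\rT_x\cO$, whose top exterior power is $(\topform(\fkk/\fkk_x)\circ\alpha)^{-1}$ because $\topform\fpp^*$ is infinitesimally trivial. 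Granting (a) and (b), your three pieces ($\fkk\cdot w$, a complement in $\rT_w(K_\bC K'_\bC w)$ mapping isomorphically onto $\rT_{x'}\cO'$, and the normal piece $\fpp^*/\rT_x\cO$) do combine correctly --- this is just a repackaging of the paper's comparison of the exact sequences $0\to\rT_wF'\to\rT_wE\to\rT_{x'}\cO'\to0$, $0\to\rT_wF\to\rT_wE\to\rT_x\cO\to0$ and $0\to\rT_wF\to\rT_wW\to\fpp^*\to0$ --- but as written the decisive step is deferred, not proved. A secondary caution: you assert the identity as one of $K'_{x'}$-modules, yet the trivialities of $\topform\fkk$ and $\topform\fpp^*$ are automatic only on Lie algebras; since $K_x$ and $K'_{x'}$ may be disconnected (orthogonal factors), the group-level statement would need an extra argument on component groups, whereas the $\fkk'_{x'}$-version stated in the lemma is all that the admissibility computation in Proposition~\ref{prop:adm} requires.
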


\begin{proof}
  Let $E = K_\bC K'_\bC w$, $F = \phi^{-1}(x)$, and $F' =
  \phi^{-1}(x')$. Let $S_w := \Stab_{K_\bC\times K'_\bC}(w) =
  \Set{(\alpha(k'),k')|k'\in K'_{x'}} \simeq K'_{x'}$.  Let $\rT_w F'$
  denote the tangent space of $F'$ at $w$ etc. We have following two
  exact sequences of $S_w$-modules:
\[
\xymatrix{
0\ar[r]& \rT_w F' \ar[r] & \rT_w E \ar[r]& \rT_{x'} \cO'\ar[r] &0 
}
\]
and 
\[
\xymatrix{
0\ar[r]& \rT_w F \ar[r] & \rT_w E \ar[r]& \rT_{x} \cO \ar[r] & 0 
}.
\]
Here $S_w$ acts on $\rT_{x} \cO$ (resp. $\rT_{x'} \cO'$) via the
projection $S_w \rightarrow K_{x}$ (resp. $S_w
\xrightarrow{\sim}K_{x'}'$). Since $S_w \simeq K_{x'}'$, the
above are also exact sequences of $K_{x'}'$-modules.

By Proposition \ref{prop:alpha}(i) $\rT_w F' \simeq \fkk$. The
$\fkk'_{x'}$-action on $\topform \fkk$ is trivial since $\fkk$ is
reductive.  Therefore
\begin{equation}\label{eq:tan1}
  \topform \rT_{x'}\cO' \simeq \topform \rT_{w} E 
  \simeq \topform \rT_x\cO\otimes \topform \rT_w F
\end{equation}
as $\fkk_{x'}'$-modules.  Since we are in the stable range, $\phi : W
\to \fpp^*$ is a submersion at every point $w\in W$.  We have
following exact sequence of $K'_{x'}$-modules:
\[
\xymatrix{
0\ar[r]& \rT_w F \ar[r] & \rT_w W \ar[r]& \rT_{x} \fpp^* \ar[r] & 0. 
}
\]
Since $\rT_x\fpp^* \simeq \fpp^*$ and $\fkk'_{x'}$ acts trivially on
$\topform \fpp^*$, we have
\begin{equation}\label{eq:tan2}
  \topform \rT_w W \simeq \topform \rT_w F. 
\end{equation}
Combining \eqref{eq:tan1}, \eqref{eq:tan2}, $\rT_w W \simeq W$,
$\rT_x\cO \simeq \fkk/\fkk_x$ and $\rT_{x'}\cO' \simeq
\fkk'/\fkk'_{x'}$, we have
\begin{equation} \label{eq28}
\topform (\fkk'/\fkk'_{x'}) = (\topform  \fkk/\fkk_x \circ \alpha) 
\otimes \topform W.
\end{equation}

We view $u \in \rU_\bC$ as a linear transformation on $W$. By our
choice of oscillator representation,
\[
\varsigma^{-2}(u) = \det(u|_W).
\]
Hence the action of $k' \in K'_{x'}$ on $\topform W$ is
\[
\det((k',\alpha(k'))|_W) =
\varsigma|_{K'}^{-2}(k')\otimes (\varsigma|_{K}^{-2} \circ \alpha (k')).
\]
Putting this into \eqref{eq28} proves the lemma and Proposition
\ref{prop:adm}.
\end{proof}

\appendix

\section{The Geometry of theta lifts of nilpotent orbits}\label{SB}

\subsection{The Fock model} \label{sec:Fock}
We retain the notation in Section~\ref{S11} where $(W_\bR, \langle ,
\rangle)$ is a symplectic space and we have fixed a maximal compact
subgroup $\rU \subset \Sp(W_{\bR})$.  It is well known that there are
two oscillator representations.  We will specify our choice of
oscillator representation in this paper by describing its Fock model.

First we fix a square root of $-1$, say $i$.  The centralizer of $\rU$
in $\Sp(W_\bR)$ is isomorphic to $\rU(1)$.  For an element $J$ in the
centralizer such that $J^2 = -1$, we define a complex structure on
$W_\bR$ such that $i\cdot v := Jv$ for all $v\in W_\bR$. We denote the
corresponding complex vector space by $W$.  Then $\inn{v_1}{v_2}_H :=
\inn{Jv_1}{v_2} + i \inn{v_1}{v_2}$ defines a Hermitian form
on~$W$. There are two choices of $J$ and we choose the one such that
$\inn{}{}_H$ is positive definite. Now $\rU$ is the unitary group
$\rU(W,\inn{}{}_H)$. Its complexification is $\rU_\bC = \GL(W)$ and
the covering group of $\rU_\bC$ is $\wtrU_\bC = \Set{(g,z)\in \GL(W)
  \times \bC^{\times}|\det(g) = z^2}$. We identify $\wtrU$ with the
inverse image of $\rU$ in $\wtrU_\bC$.  We fix the oscillator
representation $\omega$ such that its Fock module or
$(\fgg,\wtrU)$-module $\sY$ is isomorphic to $\bC[W]$ such that
$(\omega(\tg) f)(v) = z^{-1} f(g^{-1} v) $ for all $\tg = (g,z) \in
\wtrU_\bC$ and $f\in \bC[W]$. In particular the minimal $\wtrU$-type
is one dimensional consisting of constant functions on $W$ and $\wtrU$
acts on it via the character $\varsigma(\tg) = z^{-1}$ where $\tg =
(g,z)\in \wtrU$.

\subsection{The moment maps} \label{SA1} Let $(G,G')$ denote a type I
irreducible reductive dual pair in $\Sp(W_\bR)$ as in Table
\ref{tab:sstabncp}. Let $\sp = \Lie(\Sp(W_\bR))_\bC$ and $\sp^{(1,1)}
= \Lie(\rU)_\bC$. Under the adjoint action of $\rU$, $\sp =
\sp^{(2,0)} \oplus \sp^{(1,1)} \oplus \sp^{(0,2)}$. Here $\sp^{(2,0)}$
is an abelian Lie subalgebra acting on $\bC[W]$ via multiplication by
degree two polynomials. In particular, we have $\cS(\sp^{(2,0)})
\rightarrow \bC[W]$ by $p \mapsto p\cdot 1$. Let $\fgg = \fkk \oplus
\fpp$ be the complexified Cartan decomposition of $G$. The composition
$\fpp \hookrightarrow \sp \twoheadrightarrow \sp^{(2,0)}$ induces an
algebra homomorphism $\cS(\fpp) \to \cS(\sp^{(2,0)})$. Composing the
two maps gives $\phi^* : \cS(\fpp) \to \bC[W]$ which defines $\phi
\colon W \to \fpp^* = \Spec(\cS(\fpp))$.  Similarly we define $\phi'
\colon W \to \fpp'^* = \Spec(\cS(\fpp'))$. Hence we have
\[
\xymatrix{
\fpp^* & \ar[l]_{\phi} W \ar[r]^{\phi'} & \fpp'^*.}
\]
The maps $\phi$ and $\phi'$ are called the {\it moment maps}.

\smallskip

We describe explicitly the moment maps in Table~\ref{tab:noncpt1}
below. Here $J_{2p}$ is the skew symmetric $2p$ by $2p$ matrix
$\left(\begin{smallmatrix} 0 & 1 \\ -1 & 0 \end{smallmatrix}\right)$.

\begin{table}[htbp]
  \centering \footnotesize
  \begin{tabular}{cc|c|cc}
    $G$ & $G'$ & ${\Wcp}$ &$\fpp^*$ & $\fpp'^*$ \\
    & & $w\in W$& $\phi(w)$ & $\phi'(w)$\\
\hline
\multirow{2}{*}{$\Sp(2n,\bR)$} & \multirow{2}{*}{$\rO(p,q)$} &  
$M_{p,n}\times M_{q,n}$ & $\Sym_n\times \Sym_n$ & $M_{p,q}$\\
& & $(A,B)$ & $(A^TA, B^TB)$ & $AB^T$ \\
\hline
\multirow{2}{*}{ $\rU(n_1,n_2)$} & \multirow{2}{*}{$\rU(p,q)$} & $M_{p,n_1}\times M_{p,n_2}\times
M_{q,n_1}\times M_{q,n_2}$&
 $M_{n_1,n_2}\times M_{n_2,n_1}$& $M_{p,q} \times
M_{q,p}$ \\
& & $(A,B,C,D)$ & $(A^TB, D^TC)$ & $(AC^T,DB^T)$ \\
\hline
\multirow{2}{*}{$\rO^*(2n)$} & \multirow{2}{*}{$\Sp(p,q)$} & $M_{2p,n}\times M_{2q,n}$ &
$\Alt_n\times\Alt_n$ & $M_{2p,2q}$\\
& & $(A,B)$ & $(A^T J_{2p} A,B^TJ_{2q} B)$ & $AB^T$ \\
\hline
\multirow{2}{*}{$\Sp(2n, \bC)$} & \multirow{2}{*}{$\rO(p,\bC)$} & 
$M_{p,2n}$ &
$\Sym_{2n}$ & $\Alt_p$\\
& & $A$ & $A^T A$ & $A J_{2n} A^T$
\end{tabular}
\caption{Moment maps for non-compact dual pairs.}
\label{tab:noncpt1}
\end{table}

The following fact is true for every reductive dual
pairs, not necessarily in the stable range.  The moment map factors
through the affine quotient:
\begin{equation} \label{eq22} \xymatrix@C=4em{ W\ar@{->>}[r]
    \ar@/^2pc/[rr]^{\phi'}& W/K_\bC \ar@{^{(}->}[r]^{i_{W/K_\bC}} &
    \fpp'^*.  }
\end{equation}
By the First Fundamental Theorem of classical invariant theory,
$\bC[W]^{K_\bC}$ is a quotient of $\cS(\fpp')$, i.e. $i_{W/K_\bC}$ is
a closed embedding. For every $K_\bC$-invariant closed subset $E
\subseteq W$, its image in $W/K_\bC$ is closed by Corollary 4.6 in
\cite{PV}. This implies that $\phi'(E)$ is closed in $\fpp'^*$.  Hence
for every $K_\bC$-invariant closed subset $S \subseteq \fpp^*$,
$\theta(S) := \phi'(\phi^{-1}(S))$ is a $K'_\bC$-invariant closed
subset of $\frakp'^*$.

\smallskip

\subsection{} 
We recall the nilpotent cone $\nullp = \Set{x \in \fpp^*|0\in
  \overline{K_\bC \cdot x}}$.  Let $\frakN_{K_\bC}(\fpp^*)$ be the set
of nilpotent $K_\bC$-orbits in $\fpp^*$.  We define $\nullpp$ and
$\frakN_{K'_\bC}(\fpp'^*)$ in the same way.

We summarize some results in \cite{Ohta:1991C},
\cite{Daszkiewicz:2005} and \cite{NOZ1}.

\begin{thm} \label{thm:TLorb} Let $(G,G')$ be a reductive dual pair in
  stable range where $G$ is the smaller member as in
  Table~\ref{tab:sstabncp}.
\begin{enumerate}[(i)]
\item For any nilpotent $K_\bC$-orbit $\cO$ in $\fpp^*$, there is a
  nilpotent $K'_\bC$-orbit $\cO'$ in $\fpp'^*$ such that
  \[
  \phi'(\phi^{-1}(\bcO)) = \overline{\cO'}.
  \]
  This defines an injective map $\theta\colon \frakN_{K_\bC}(\fpp^*)
  \to \frakN_{K'_\bC}(\fpp'^*)$ given by $\cO \mapsto \cO'$. This map
  is called the theta lifting of nilpotent orbits.

\item Theta lifting of nilpotent orbits preserves closure relation,
  i.e. if $\cO_0 \subset \overline{\cO}$ then $\theta(\cO_0)\subset
  \overline{\theta(\cO)}$. \qed
\end{enumerate}
\end{thm}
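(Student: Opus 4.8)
The plan is to reduce both statements to known descriptions of $\phi'(\phi^{-1}(\bcO))$ in terms of signed Young diagrams. First I would recall that, as established in Section~\ref{SA1}, $\theta(S)=\phi'(\phi^{-1}(S))$ is always a $K'_\bC$-invariant closed subset of $\fpp'^*$, and that it is contained in $N(\fpp'^*)$ when $S\subseteq N(\fpp^*)$. Applying this to $S=\bcO$ for a nilpotent orbit $\cO$, we learn that $\theta(\bcO)$ is a closed $K'_\bC$-invariant subset of the nilpotent cone. To prove part~(i) it therefore suffices to show that this closed set is \emph{irreducible}, so that it is the closure of a single orbit $\cO'$, and then to verify that $\cO\mapsto\cO'$ is injective.

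For the irreducibility, the key point is that $\phi^{-1}(\bcO)$ is irreducible: using Lemma~\ref{lem:rednorm} (which identifies $\bC[\phi^{-1}(\bcO)] = \bC[W]\otimes_{\cS(\fpp)}\bC[\bcO]$ and shows this fiber product is reduced and normal), one checks that $\phi^{-1}(\bcO)$, being a vector-bundle-like fibration over the irreducible variety $\bcO$, is irreducible. Then $\theta(\bcO)=\phi'(\phi^{-1}(\bcO))$ is the continuous image of an irreducible set, hence irreducible, hence equal to $\overline{\cO'}$ for a unique open orbit $\cO'$. This is exactly the content that is worked out in \cite{Ohta:1991C}, \cite{Daszkiewicz:2005} and \cite{NOZ1}, where moreover the signed Young diagram of $\cO'$ is computed explicitly from that of $\cO$ (one adds a column of the appropriate sign pattern, depending on the type of the dual pair). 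Injectivity of $\theta$ then follows from injectivity of this combinatorial operation on signed Young diagrams: the added column can be stripped off to recover the diagram of $\cO$ together with its sign data, so distinct orbits $\cO$ have distinct images.

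For part~(ii), I would argue as follows. If $\cO_0\subseteq\bcO$ then $\phi^{-1}(\bcO_0)\subseteq\phi^{-1}(\bcO)$, hence $\theta(\bcO_0)=\phi'(\phi^{-1}(\bcO_0))\subseteq\phi'(\phi^{-1}(\bcO))=\theta(\bcO)$; since $\theta(\bcO_0)=\overline{\theta(\cO_0)}$ and $\theta(\bcO)=\overline{\theta(\cO)}$ by part~(i), this gives $\overline{\theta(\cO_0)}\subseteq\overline{\theta(\cO)}$, i.e. $\theta(\cO_0)\subseteq\overline{\theta(\cO)}$ as claimed. This is a purely formal consequence of part~(i) together with monotonicity of $\phi^{-1}$ and of images under $\phi'$.

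The main obstacle is the explicit determination of $\cO'$ and the proof that $\phi'(\phi^{-1}(\bcO))$ is exactly $\overline{\cO'}$ rather than merely containing it or being contained in it; this requires the detailed case-by-case invariant-theoretic analysis of the double fibration $\fpp^*\leftarrow W\to\fpp'^*$ for each entry of Table~\ref{tab:sstabncp}, which is precisely what is carried out in the references cited. In the write-up I would cite \cite{Ohta:1991C}, \cite{Daszkiewicz:2005} and \cite{NOZ1} for this step and only assemble the irreducibility and closure-preservation statements from them, since a self-contained treatment would duplicate substantial published work.
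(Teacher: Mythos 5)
The paper itself offers no proof of Theorem~\ref{thm:TLorb}: it is stated, with an immediate end-of-proof mark, as a summary of results from \cite{Ohta:1991C}, \cite{Daszkiewicz:2005} and \cite{NOZ1}. So your decision to cite those references for the substantive content is exactly the paper's treatment, and your deduction of (ii) from (i) by monotonicity of $\phi^{-1}$ and of images under $\phi'$ is fine.

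However, the reduction you sketch for (i) has a genuine gap and should not be presented as a proof. Reducedness (or normality) of $Z=W\times_{\fpp^*}\bcO$ from Lemma~\ref{lem:rednorm}, together with flatness of $\phi$, does not make $\phi^{-1}(\bcO)$ irreducible: a flat morphism with reduced fibres over an irreducible base can still have reducible total space (already two disjoint copies of the base), and the fibres here are null-cone-like rather than linear, so ``vector-bundle-like fibration'' is not a justification. Moreover $\bcO$ itself need not be irreducible, since $K_\bC$ may be disconnected (see the warning after Proposition~\ref{P8}), so the frame ``irreducible closed invariant set $=$ closure of a single orbit'' is not even the right formulation in general; note also that the normality part of Lemma~\ref{lem:rednorm} requires $\bcO$ normal and excludes the pairs \eqref{eq:ddagger}, restrictions absent from Theorem~\ref{thm:TLorb}. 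The paper is explicit about what the genuine input is: in its sketch of Theorem~\ref{thm:orbits} it says it cannot avoid the fact, proved by a deformation argument in \cite{NOZ1} (Theorem~2.5), that $\phi^{-1}(\bcO)$ contains a unique open dense $K_\bC\times K'_\bC$-orbit; it is this fact (or the case-by-case matrix analysis in \cite{Daszkiewicz:2005}), together with the signed-Young-diagram description giving injectivity, that yields (i). Since you ultimately defer to the same references for exactly these points, your write-up is acceptable provided you drop, or clearly downgrade to motivation, the irreducibility argument via Lemma~\ref{lem:rednorm}.
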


We refer to Table \ref{tab:noncpt1} where $W$ is written as a product
of matrix spaces.  Let $W^\circ $ be the open dense subset of elements
in $W$ whose every component has full rank. Before we discuss the
finer structures of orbits, we state following lemma.

\begin{lemma}\label{lem:fullrank}
Let $(G,G')$ be a reductive dual pair in the stable range as in
Table \ref{tab:sstabncp}.
\begin{enumerate}[(i)]
\item We have $\phi'^{-1}(\phi'(W^\circ)) = W^\circ$.

\item For any $x' \in \phi'(W^\circ)$, $\phi'^{-1}(x') =
  \phi'^{-1}(x')\cap W^\circ$ is a single $K_\bC$-orbit where $K_\bC$
  acts freely.

\item  For any $x \in \phi(W^\circ)$, $\phi^{-1}(x) \cap W^\circ$ is a
  single $K_\bC'$-orbit.

\item We have one-to-one correspondences of the following sets of orbits
\[
\begin{array}{rcccl}
  \Set{K_\bC\text{-orbits in } \phi(W^\circ)} & \leftrightarrow & 
  \Set{K_\bC\times K'_\bC\text{-orbits in } W^\circ} &
  \leftrightarrow & \Set{K'_\bC\text{-orbits in } \phi'(W^\circ)} \\
  \phi(C) & \mapsfrom & C & \mapsto &  \phi'(C) \\
  \cO & \mapsto & \phi^{-1}(\cO) \cap W^\circ & & \\
  & & \phi'^{-1}(\cO') = \phi'^{-1}(\cO') \cap W^\circ & \mapsfrom & \cO'.
\end{array}
\]
\end{enumerate}
\end{lemma}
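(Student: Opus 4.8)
The plan is to verify the four assertions by a direct, case-by-case computation using the explicit formulas for $\phi$ and $\phi'$ recorded in Table~\ref{tab:noncpt1}, since $W^\circ$ is defined componentwise in terms of full rank. Throughout, the stable range hypothesis (last column of Table~\ref{tab:sstabncp}) is what guarantees that ``full rank'' for the $W$-components is compatible with the sizes of the matrices appearing in $\phi(w)$ and $\phi'(w)$; this is the structural input that makes the linear algebra work.

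First I would treat (i). Take the real case $(G,G')=(\Sp(2n,\bR),\O(p,q))$ with $W=M_{p,n}\times M_{q,n}$, $\phi'(A,B)=AB^T\in M_{p,q}$, and $2n\le p,q$. If $(A,B)\in W^\circ$, so $\rk A=\rk B=n$, then $\rk(AB^T)=n$ (here $n\le\min(p,q)$ forces this). Conversely if $\phi'(A_1,B_1)=\phi'(A,B)$ and $(A,B)\in W^\circ$, then $A_1 B_1^T$ has rank $n$, which forces $\rk A_1,\rk B_1\ge n$, hence $=n$ (the column number); so $(A_1,B_1)\in W^\circ$. The same pattern — ``the product of two full-rank matrices again has full rank, and a full-rank product forces both factors full rank'' — handles the $\rU$-case (products $AC^T$, $DB^T$, etc.), the $\O^*$--$\Sp$ case (here one must observe $A^TJ_{2p}A$ is nondegenerate on a rank-$n$ column space since $J_{2p}$ is nondegenerate and $p$ is large enough), and the complex case. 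So (i) reduces in each case to elementary rank bookkeeping, where the stable-range inequalities supply the needed bounds.

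Next, for (ii) and (iii): fixing $x'=\phi'(w)$ with $w\in W^\circ$, I would show the fiber $\phi'^{-1}(x')\cap W^\circ$ is a single $K_\bC$-orbit with $K_\bC$ acting freely (resp.\ $\phi^{-1}(x)\cap W^\circ$ a single $K'_\bC$-orbit). Concretely in the real case, if $AB^T=A_1B_1^T$ with all four matrices of rank $n$, one writes $A_1=Ag$, $B_1=Bg^{-T}$ for some $g\in\GL_n(\bC)$ (using that $A,B$ have full column rank to define $g$ from left inverses), and then the $\Sp$-side equations $\phi(A,B)=(A^TA,B^TB)$ cut out exactly the condition $g\in\O_n(\bC)=(K_\bC$ for $\Sp(2n,\bR))$ — wait, one must be careful: here $K_\bC$ for $\Sp(2n,\bR)$ is $\GL_n(\bC)$ and the $K_\bC$-action on $W$ is via $g\cdot(A,B)=(Ag^{-1},Bg^T)$ or similar as in Table~\ref{tab:noncpt1}; the point is that the $\GL_n$ ambiguity in the factorization is precisely the $K_\bC$-action, and freeness follows because a full-rank $A$ with $Ag=A$ forces $g=\Id$. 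I would run the analogous argument in the other three cases, the main subtlety being the $\O^*$--$\Sp$ case where the skew form $J_{2p}$ enters; one uses that $A^TJ_{2p}A$ together with $A$ full rank pins down $A$ up to the appropriate group. Assertion (iii) is the mirror statement with the roles of $\phi,\phi'$ swapped, and is if anything easier since there is no freeness claim.

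Finally (iv) is a formal consequence of (i)–(iii): by (i), $W^\circ=\phi^{-1}(\phi(W^\circ))=\phi'^{-1}(\phi'(W^\circ))$, so $\phi$ and $\phi'$ restrict to surjections $W^\circ\twoheadrightarrow\phi(W^\circ)$ and $W^\circ\twoheadrightarrow\phi'(W^\circ)$ whose fibers are, by (iii) and (ii), single $K'_\bC$- resp.\ $K_\bC$-orbits; hence the three sets of orbits are in bijection by push-forward, and one checks the displayed maps are mutually inverse by chasing a point. I expect the main obstacle to be organizing the four cases uniformly and getting the precise form of the $K_\bC$- and $K'_\bC$-actions right (these are buried in the conventions behind Table~\ref{tab:noncpt1}), rather than any deep difficulty; the $\O^*(2n)$--$\Sp(p,q)$ case with its alternating form $J_{2p}$ and $J_{2q}$ will need the most care, and I would double-check there that the stable-range bound $n\le p,q$ indeed guarantees $A^TJ_{2p}A$ is a nondegenerate alternating form of rank $n$ when $\rk A=n$.
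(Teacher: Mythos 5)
Your overall route is the same as the paper's: a case-by-case verification using the explicit moment maps of Table~\ref{tab:noncpt1}, with (i) being rank bookkeeping, (ii) coming from the fact that the $\GL$-ambiguity in factoring $x'=AB^T$ through full-column-rank matrices is exactly the $K_\bC$-action (freeness because $Ag=A$ with $A$ injective forces $g=\Id$), (iii) coming from a Witt-type rigidity statement (the paper invokes Witt's extension theorem, \cite{Howe95}*{Theorem~3.7.1}, and you should too --- ``pins down $A$ up to the appropriate group'' is precisely that), and (iv) being formal from (i)--(iii). In the representative case your computations agree with the paper's, which likewise treats only $(\Sp(2n,\bR),\rO(p,q))$ in detail.

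However, two auxiliary claims you make are false, and you should strike them. First, $\phi^{-1}(\phi(W^\circ))=W^\circ$ is wrong: over $\bC$ a full-column-rank $A$ can have degenerate Gram matrix, e.g.\ for $(\Sp(2,\bR),\rO(2,2))$ take $A=B=(1,i)^T$, so $\phi(A,B)=(A^TA,B^TB)=(0,0)$, and $\phi^{-1}(0,0)$ contains $0\notin W^\circ$. This asymmetry is exactly why part (i) is stated only for $\phi'$ and why the correspondence in (iv) must use $\phi^{-1}(\cO)\cap W^\circ$ on the $\phi$-side (no intersection is needed on the $\phi'$-side, by (i)). Your argument for (iv) survives because all you actually use are the restrictions $\phi|_{W^\circ}$ and $\phi'|_{W^\circ}$ together with (ii) and (iii), but as written the displayed identity is incorrect and obscures the point of the lemma. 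Second, in the $\rO^*(2n)$--$\Sp(p,q)$ case the claim that $A^T J_{2p}A$ is nondegenerate whenever $\rk A=n$ is also false (the column space of $A$ may be isotropic; for $n=1$ the form $A^TJ_{2p}A$ is identically zero), and it is in any case irrelevant to (i), since for that pair $\phi'(A,B)=AB^T$ and the rank argument is the same as in the real case. What (iii) needs there is only Witt's theorem for the ambient nondegenerate alternating form, which does not require the restricted form to be nondegenerate; so the item you flag for ``double-checking'' at the end is not something to be checked but something to be deleted.
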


\begin{proof}
  The proof for each dual pair is similar so we will give the proof
  for the first pair in Table~\ref{tab:sstabncp} and
  leave the other cases to the reader.

  Consider $(G,G') = (\Sp(2n,\bR),\rO(p,q))$, $W = M_{p,n}\times
  M_{q,n}$, $\fpp'^* = M_{p,q}$ and $p, q \geq 2n$. For $(A,B)
    \in M_{p,n}\times M_{q,n} = W$, $\phi'(A,B) = A B^T$ has rank $n$
    if and only if $A$ and $B$ have rank $n$. This proves (i) and the
    equality in (ii).

    Let $x'\in \phi'(W^\circ)$.  Let $(A,B), (A',B') \in
    \phi'^{-1}(x') \cap W^\circ$.  We have
\begin{equation}\label{eq:ab1}
AB^T = \phi'(A,B) = x' =\phi'(A',B') = A'(B')^T.
\end{equation} 
Here $x'$, $A$, $B$, $A'$, $B'$ are all rank $n$ matrices.  Since the
column space of $A$ (resp. $A'$) is same as the column space of $x'$,
we may assume that $A = A'$ by the action of $K_\bC = \GL(n,\bC)$. If
we interpret $A : \bC^n \rightarrow \bC^p$ as an injective linear map,
it is clear that~\eqref{eq:ab1} implies $B^T = B'^T$. This proves that
$(A,B)$ and $(A',B')$ are in the same $K_\bC$-orbit.  Hence
$\phi'^{-1}(x')\cap W^\circ = \phi'^{-1}(x')$ is a single
$K_\bC$-orbit.

Next suppose $k \in K_\bC$ stabilizes $(A,B)$. Hence $k \cdot A = A
k^{-1} = A$.  Since $A$ is an injective map, $k$ is the identity element.  This shows
that the $K_\bC$-action is faithful. This proves~(ii).

Let $x\in \phi(W^\circ)$.  Let $(A,B), (A',B') \in \phi^{-1}(x)\cap
W^\circ$. We have
\[
(A^TA,B^TB) = \phi(A,B) = x =\phi(A',B') = (A'^TA',B'^TB').
\]
Since $\Ker A = \Ker A' = 0$ and $A^TA = A'^TA'$, there is an $o\in \rO(p,\bC)$ such that
$A = o A'$ by Witt's theorem (for example, see
\cite{Howe95}*{Theorem~3.7.1}). The same argument applies to $B$ and
$B'$. Hence $\phi^{-1}(x)\cap W^\circ$ is a single orbit of $K'_\bC =
\rO(p,\bC)\times \rO(q,\bC)$. This proves~(iii).

Part (iv) follows from (i), (ii) and (iii).
\end{proof}

\begin{thm} \label{thm:orbits} Let $(G,G')$ be a reductive dual pair
  in the stable range where $G$ is the smaller member as in
  Table~\ref{tab:sstabncp}.  Let $\cO \in \frakN_{K_\bC}(\fpp^*)$. Set
  $\cO' = \phi'(\phi^{-1}(\cO)\cap W^\circ)$. Then
\begin{enumerate}[(i)]
\item $\phi'^{-1}(\cO') = \phi'^{-1}(\cO')
  \cap W^\circ = \phi^{-1}(\cO)\cap W^\circ = \phi'^{-1}(\cO')\cap
  \phi^{-1}(\bcO)$  is a $K_\bC \times K'_\bC$-orbit.
\item $\cO'$ is a $K'_\bC$-orbit.
\item $\phi(\phi'^{-1}(\cO'))= \cO$.
\item $\cO' = \theta(\cO)$.
\end{enumerate} 
\end{thm}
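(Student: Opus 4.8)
The plan is to deduce both statements from the structure results already available: Theorem~\ref{thm:TLorb}, Lemma~\ref{lem:fullrank}, and the explicit description of the moment maps in Table~\ref{tab:noncpt1}. The key observation is that the defining property $\phi'(\phi^{-1}(\bcO)) = \overline{\cO'}$ of the theta lift, combined with the full-rank analysis, pins down the scheme-theoretic fibers over $\cO'$ exactly.

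First I would establish the chain of inclusions in (i). Since $\cO'$ is the unique open $K'_\bC$-orbit in $\phi'(\phi^{-1}(\bcO)) = \overline{\cO'}$, and since $\phi'(W^\circ)$ is open dense in $\phi'(W)$, the set $\phi^{-1}(\bcO)\cap W^\circ$ is nonempty; by Lemma~\ref{lem:fullrank}(iv) it is a union of $K_\bC\times K'_\bC$-orbits, each of which maps under $\phi'$ onto a single $K'_\bC$-orbit in $\phi'(W^\circ)$. I claim $\phi^{-1}(\bcO)\cap W^\circ$ consists of a single such orbit, namely the one mapping onto $\cO'$. For this, note that $W^\circ \cap \phi^{-1}(\bcO)$ maps onto an open dense subset of $\overline{\cO'}$ (its image is open in $\phi'(W^\circ)$ and its closure is all of $\overline{\cO'}$), hence onto $\cO'$ itself since $\cO'$ is the unique open orbit; and by Lemma~\ref{lem:fullrank}(iv) the correspondence between $K'_\bC$-orbits in $\phi'(W^\circ)$ and $K_\bC\times K'_\bC$-orbits in $W^\circ$ is a bijection, so there is exactly one orbit over $\cO'$. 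This shows $\phi'^{-1}(\cO')\cap W^\circ$ is a single $K_\bC\times K'_\bC$-orbit and it lies in $\phi^{-1}(\bcO)$. It remains to check that $\phi'^{-1}(\cO')$ is entirely contained in $W^\circ$: if $w\in W$ with $\phi'(w)\in\cO'$ but $w\notin W^\circ$, then by Lemma~\ref{lem:fullrank}(i) $\phi'(w)\notin\phi'(W^\circ)$, yet $\cO'\subseteq\phi'(W^\circ)$ because $\cO'$ is the open orbit in $\overline{\cO'}=\phi'(\phi^{-1}(\bcO))$ and the latter's intersection with $\phi'(W^\circ)$ is open dense — so in fact $\cO'\subseteq\phi'(W^\circ)$, a contradiction. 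Hence $\phi'^{-1}(\cO') = \phi'^{-1}(\cO')\cap W^\circ = \phi^{-1}(\bcO)\cap\phi'^{-1}(\cO')$ and all three descriptions in (i) coincide.

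For (ii), once (i) is known the image $\phi(\phi'^{-1}(\cO'))$ is a single $K_\bC$-orbit by Lemma~\ref{lem:fullrank}(iv), call it $\cO_1$, with $\cO_1\subseteq\bcO$. On the other hand $\phi'^{-1}(\cO')$ is dense in $\phi^{-1}(\bcO)$ (its complement is $\phi^{-1}(\bcO)\setminus W^\circ$ together with the lower-rank strata, all of strictly smaller dimension by the count in Lemma~\ref{lem:codim2} / the stable-range submersivity of $\phi$), so $\phi(\phi'^{-1}(\cO'))$ is dense in $\phi(\phi^{-1}(\bcO)) = \bcO$; thus $\overline{\cO_1} = \bcO$, forcing $\cO_1 = \cO$.

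The main obstacle I anticipate is the density/open-orbit bookkeeping in (i): making rigorous that $\cO'$ (rather than some smaller boundary orbit) is the image of the full-rank locus, and that $\phi'^{-1}(\cO')$ does not meet the non-full-rank part of $W$. Both come down to the interplay between ``$\cO'$ is the unique open $K'_\bC$-orbit in $\overline{\cO'}$'' and Lemma~\ref{lem:fullrank}(i), and should go through cleanly once the equidimensionality of the fibers (a consequence of flatness of $\phi$, Theorem~\ref{thm:Kos}) is invoked. The explicit matrix descriptions in Table~\ref{tab:noncpt1} can always be fallen back on case-by-case if the uniform argument needs a supporting computation.
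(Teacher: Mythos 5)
Your outline follows the same route as the paper (reduce everything to the full-rank locus via Lemma~\ref{lem:fullrank} and then match against the defining property $\phi'(\phi^{-1}(\bcO))=\overline{\cO'}$), but the crucial step is asserted rather than proved, and one intermediate claim is false. First, the claim that $\phi^{-1}(\bcO)\cap W^\circ$ is a single $K_\bC\times K'_\bC$-orbit cannot be right: for $\cO\neq\{0\}$ the closure $\bcO$ contains $0$ and all boundary orbits, and in stable range $\cN=\phi^{-1}(0)\cap W^\circ$ is nonempty, so $\phi^{-1}(\bcO)\cap W^\circ$ contains at least the two disjoint orbits $\phi^{-1}(\cO)\cap W^\circ$ and $\cN$. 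Second, and more seriously, the statement ``its image is open in $\phi'(W^\circ)$ and its closure is all of $\overline{\cO'}$'' is exactly the nontrivial content of the theorem, not a remark: what one needs is that the single orbit $\phi^{-1}(\cO)\cap W^\circ$ is \emph{dense} in $\phi^{-1}(\bcO)$, so that $\phi'(\phi^{-1}(\bcO))=\overline{\phi'(\phi^{-1}(\cO)\cap W^\circ)}$ and hence the orbit produced by Lemma~\ref{lem:fullrank}(iv) really is $\theta(\cO)$ and not the lift of some boundary orbit $\cO_1\subsetneq\bcO$. The paper states explicitly that it cannot avoid invoking the fact that $\phi^{-1}(\bcO)$ has a unique open dense $K_\bC\times K'_\bC$-orbit (Theorem~2.5 of \cite{NOZ1}, proved by a deformation argument); your proposal supplies no substitute for this input, and without it the identification of the full-rank orbit over $\cO$ with the orbit over $\cO'$ is not established (your argument at best shows that \emph{some} orbit $\cO_1\subseteq\bcO$ has full-rank lift mapping onto $\cO'$).

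The fallback justifications you offer do not close this hole. Openness of $\phi'(\phi^{-1}(\bcO)\cap W^\circ)$ in $\phi'(W^\circ)$ is unjustified ($\phi^{-1}(\bcO)\cap W^\circ$ is not open in $W^\circ$, and $\phi'$ is not claimed to be open). Appealing to Lemma~\ref{lem:codim2} is circular: that lemma is formulated and proved using Theorem~\ref{thm:orbits}(i), and its proof rests on $\codim(\bcN,\partial\cN)\geq 2$, which comes from Theorem~\ref{thm:Kos}(iii) and requires excluding the cases \eqref{eq:ddagger} --- an exclusion Theorem~\ref{thm:orbits} does not make. Flatness of $\phi$ gives equidimensionality of fibers, but by itself it does not show that the non-full-rank locus $R\cap\phi^{-1}(\bcO)$ together with $\phi^{-1}(\partial\cO)$ is nowhere dense in $\phi^{-1}(\bcO)$, which is what your density claim amounts to. Finally, once (i) is actually in hand, part (ii) is immediate: $\phi'^{-1}(\cO')=\phi^{-1}(\cO)\cap W^\circ$ is a nonempty $K_\bC$-invariant subset mapping into the single orbit $\cO$, so its image is $\cO$; the detour through density of $\phi'^{-1}(\cO')$ in $\phi^{-1}(\bcO)$ and the identity $\overline{\cO_1}=\bcO$ is unnecessary and, as justified, again leans on the circular codimension estimate.
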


Using Table~4 in \cite{Daszkiewicz:2005}, one may calculate the above
orbits in $\phi^{-1}(\cO)$ and $\phi'(\phi^{-1}(\cO))$ explicitly and
verify the theorem directly. However we will sketch a simpler proof
below.

\begin{proof}[Sketch of the proof of Theorem \ref{thm:orbits}]
  Parts (i) to (iii) are direct consequences of
  Lemma~\ref{lem:fullrank}.

  By Theorem~2.5 in \cite{NOZ1}, $\phi^{-1}(\bcO)$ has a unique open
  dense $K_\bC\times K'_\bC$-orbit $\cD$.  Since $\phi^{-1}(\cO)\cap
  W^\circ$ is open and nonempty in $\phi^{-1}(\bcO)$, it is equal to
  $\cD$ and $\phi^{-1}(\bcO) = \overline{\cD}$. Hence $\bcOp \supseteq
  \phi'(\overline{\cD}) = \phi'(\phi^{-1}(\bcO)) \supseteq \cO'$ so
  $\bcOp = \theta(\bcO)$. This proves (iv).
\end{proof}

\subsection{} \label{SA4} {\sc Proof of Proposition \ref{prop:alpha}.}
Suppose $x = \phi(w)$ and $x' = \phi'(w)$ as in Proposition
~\ref{prop:alpha}. Then $w \in W^\circ$ by Theorem
\ref{thm:orbits}(i).  Fix a $k'\in K'_{x'}$. Then $(k')^{-1} \cdot w
\in \phi'^{-1}(x')$. By Lemma~\ref{lem:fullrank}(ii) there is a unique
$k \in K_\bC$ such that $k\cdot w = (k')^{-1} \cdot w$.  Since $x =
\phi((k')^{-1} \cdot w) = \phi(k\cdot w) = k\cdot \phi(w) = k\cdot x$,
we have $k \in K_x$. We define $\alpha(k') = k$.  It is
straightforward to check that $\alpha$ is a group homomorphism
and~\eqref{eq:stab} holds.

  Now we prove that $\alpha$ is surjective. Fix a $k \in K_{x}$. Since
  $k\cdot w \in \phi^{-1}(x)\cap W^\circ$, there is an element $k'\in
  K'_\bC$, such that $k'\cdot w = k \cdot w$ by Lemma
  \ref{lem:fullrank}(iii). It is clear that that $k' \in K'_{x'}$ so
  $k\cdot w = \alpha(k')^{-1} \cdot w$. Since the $K_\bC$-action on
  $\phi'^{-1}(x')$ is free, we have $\alpha(k')^{-1} = k$. This proves
  that $\alpha$ is surjective. \qed

\subsection{} \label{sec:A2} We discuss the scheme theoretical
properties of the moment maps. 

Let $R = W-W^\circ$ be the set of matrices without full rank.  Let
$\cN = \phi^{-1}(0)\cap W^\circ $ and $\partial \cN := \bcN - \cN$. By
Theorem~\ref{thm:orbits}, $\cN$ is a single $K'_\bC$-orbit, $\bcN =
\phi^{-1}(0)$ and $\partial \cN = R\cap \bcN$. We state some well
known geometric properties of the null fiber $\bcN$.

\begin{thm}[\cite{DT} \cite{Kostant} \cite{NOZ1}]
\label{thm:Kos} Let $(G,G')$ be a
  real reductive dual pair in the stable range as in Table~\ref{tab:sstabncp}.
\begin{enumerate}[(i)]
\item We have $\bC[W] = \cH \otimes \cS(\fpp)$ where $\cH$ is the
  space of $K'_\bC$-harmonic. In particular, the map $\phi\colon \Wcp
  \to \fpp^*$ is a faithfully flat morphism.  All the fibers of
    $\phi$ have the same dimension (see for example, the discussion on
    page 239 in \cite{PV}).

\item The scheme theoretical fiber $W\times_{\fpp^*}\Set{0}$ is
  reduced, i.e. $\bcN = W\times_{\fpp^*}\Set{0}$. 

\item If the dual pair is not \eqref{eq:ddagger} in Section
  \ref{S13}, then $\bcN$ is normal and $\partial \cN$ has
    codimension at least $2$ in~$\bcN$. \qed
\end{enumerate}
\end{thm}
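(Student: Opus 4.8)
The plan is to deduce all three parts from two facts about the linear action of $K'_\bC$ on $W$ --- that its ring of invariants is a polynomial ring and that the corresponding null cone has the expected dimension --- together with a complete-intersection argument for normality. First I would record the invariant theory. Inspecting the formulas of Table~\ref{tab:noncpt1} case by case, in the stable range $\phi$ is surjective onto $\fpp^*$; for example for $(\Sp(2n,\bR),\rO(p,q))$ the map $(A,B)\mapsto(A^TA,B^TB)$ already hits all of $\Sym_n\times\Sym_n$ once $p,q\ge n$. Hence the comorphism $\phi^*\colon\cS(\fpp)\to\bC[W]$ is injective with image $\bC[W]^{K'_\bC}$ by the first fundamental theorem of classical invariant theory, and in the stable range the relevant ranks are large enough that there are no second--fundamental--theorem relations, so $\bC[W]^{K'_\bC}\cong\cS(\fpp)$ is a polynomial ring and $W/\!\!/K'_\bC=\fpp^*$ with quotient map $\phi$. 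I would also note the $\bC^\times$-action acting with weight $1$ on $W$ and weight $2$ on $\fpp^*$, for which $\phi$ is equivariant and which contracts $\fpp^*$ to the origin.

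For (i), the crux is the dimension equality $\dim\bcN=\dim W-\dim\fpp^*$. This is a finite computation: $\bcN=\phi^{-1}(0)$ is the classical null cone (for instance $\{(A,B):A^TA=B^TB=0\}$, a product of varieties of linear maps with totally isotropic image), and in the stable range its generic point has full rank, whence the expected dimension follows from the standard Grassmann-bundle count; this value is classical and recorded in the cited references. By upper semicontinuity of fibre dimension along the $\bC^\times$-contraction, every fibre of $\phi$ then has dimension $\le\dim\bcN$, while surjectivity gives the opposite inequality on the generic fibre, so all fibres have dimension exactly $\dim W-\dim\fpp^*$. Miracle flatness ($W$ is smooth, hence Cohen--Macaulay, and $\fpp^*$ is regular) now gives that $\phi$ is flat, and being surjective it is faithfully flat. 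Finally, since $\bC[W]^{K'_\bC}$ is polynomial, a graded Nakayama argument --- Kostant's separation of variables --- upgrades flatness to $\bC[W]\cong\bC[W]^{K'_\bC}\otimes_\bC\cH\cong\cS(\fpp)\otimes_\bC\cH$, where $\cH$ is any graded vector-space complement of $(\bC[W]^{K'_\bC})_+\bC[W]$, i.e.\ the space of $K'_\bC$-harmonics.

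For (ii), the scheme-theoretic fibre $W\times_{\fpp^*}\{0\}$ is $\Spec\bigl(\bC[W]/(\bC[W]^{K'_\bC})_+\bC[W]\bigr)$, so its reducedness is exactly the statement that the ideal of $\bC[W]$ generated by the positive-degree invariants is radical; this is the classical characteristic-zero fact that the null cone of several copies of the standard representation of $\GL$, $\rO$, $\Sp$ is reduced (trivial for $\GL$, and for $\rO$, $\Sp$ resting on the radicality of the relevant determinantal/Pfaffian ideals), and I would quote it from the references in the statement. For (iii), flatness of $\phi$ together with regularity of $\fpp^*$ forces the $\dim\fpp^*$ functions cutting out $\bcN$ to be a regular sequence in the regular ring $\bC[W]$, so $\bcN$ is a complete intersection, in particular Cohen--Macaulay. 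Its smooth locus contains $\cN=\phi^{-1}(0)\cap W^\circ$, which is a single (smooth) $K'_\bC$-orbit by Theorem~\ref{thm:orbits}, so the singular locus of $\bcN$ lies in $\partial\cN$; hence once $\codim_{\bcN}\partial\cN\ge2$ is known, Serre's criterion ($S_2$ from Cohen--Macaulay, $R_1$ from the codimension bound) gives normality. The codimension estimate is a finite check on the orbit stratification of $\phi^{-1}(0)$, for which one can use the explicit tables of \cite{Daszkiewicz:2005}; it is here that the exceptional pairs \eqref{eq:ddagger} have to be excluded, since for those $\partial\cN$ drops to codimension $1$.

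The genuinely non-formal step, and the one I expect to be the main obstacle, is the reducedness in (ii): it rests on the full strength of the invariant theory of $\rO$ and $\Sp$ (the second fundamental theorem together with the Cohen--Macaulay/radical property of the defining determinantal ideals). The dimension count for $\bcN$ in (i) and the codimension estimate in (iii) are elementary but must be carried out case by case, and everything else --- flatness, the harmonic splitting, the complete-intersection property, and normality --- is formal once those three inputs are available.
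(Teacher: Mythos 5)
You should first be aware that the paper does not prove Theorem~\ref{thm:Kos} at all: it is stated with citations to \cite{DT}, \cite{Kostant} and \cite{NOZ1} and closed with a \qed, i.e.\ it is imported from the literature. So there is no in-paper argument to compare against; the question is only whether your sketch is a viable reconstruction, and essentially it is --- it follows the standard Kostant-type route that those references use: FFT identification $\bC[W]^{K'_\bC}\cong\cS(\fpp)$ in the stable range, the dimension count $\dim\bcN=\dim W-\dim\fpp^*$ plus the $\bC^\times$-contraction and miracle flatness to get (faithful) flatness, graded freeness to get the separation of variables $\bC[W]=\cH\otimes\cS(\fpp)$, and then complete intersection $+$ smoothness along the full-rank locus $+$ a codimension-$2$ estimate with Serre's criterion for (ii)--(iii), the excluded pairs \eqref{eq:ddagger} being exactly where the codimension estimate fails.

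Two loose ends are worth flagging. First, your description of the reducedness in (ii) as resting on ``radicality of the relevant determinantal/Pfaffian ideals'' is not quite the right reduction: the ideal in question is generated by the entries of the quadratic moment map ($A^TA$, $A^TB$, etc., cf.\ Table~\ref{tab:noncpt1}), not by minors or Pfaffians of a generic matrix, and the $\GL$ case (the ``variety of complexes'' $\{A^TB=0\}$) is not trivial either. A cleaner way, consistent with your own setup, is: the fiber is a complete intersection (hence $S_1$, no embedded components), $\phi$ is a submersion along the full-rank locus so the fiber is generically reduced there, and one then needs that every irreducible component of $\phi^{-1}(0)$ meets $W^\circ$ --- in the stable range $\bcN$ is irreducible, being the closure of the single orbit $\cN$, but that density/dimension statement is itself part of the case-by-case rank-stratification count and is precisely what \cite{DT}, \cite{NOZ1} supply. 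Second, the codimension-$2$ bound in (iii) and the failure at \eqref{eq:ddagger} are likewise genuinely case-by-case facts (via the stratification by rank, or the orbit tables of \cite{Daszkiewicz:2005}); since you defer both points to the cited sources, your sketch is in the same spirit as the paper's own treatment, which simply quotes them.
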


We state Proposition~11.3.13(ii) in \cite{EGA:IV.3} which we will need
later in the proof of Lemma~\ref{lem:rednorm}.

\begin{prop} \label{PA6} Suppose $f\colon X_1 \to X_2 $ is a
    finitely presented flat morphism of schemes. Let $x_1 \in X$ and $x_2
    = f(x_1) \in X_2$. Then $x_1$ is reduced (resp. normal) in $X_1$ if
\begin{enumerate}[(i)]
\item  $x_2$ is reduced (resp. normal) in $X_2$ and 
\item $x_1$ is reduced (resp. normal) in $X_1 \times_{X_2} \Set{x_2}$. \qed
\end{enumerate} 
\end{prop}

Let $\cO$ be a nilpotent $K_\bC$-orbit in $\fpp^*$. Let $Z:=
W\times_{\fpp^*}\overline{\cO}$ (resp. $Y := W\times_{\fpp^*}\cO$)
be the scheme theoretic inverse image  of $\bcO$ (resp. $\cO$).

\begin{lemma} \label{lem:rednorm}
\begin{enumerate}[(i)]
\item The schemes $Z$ and $Y$ are reduced. 
\item Suppose the dual pair is not \eqref{eq:ddagger}. Then $Y$ is
  normal. If $\bcO$ is normal, then $Z$ is normal. 
\end{enumerate}
\end{lemma}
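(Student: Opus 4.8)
The plan is to deduce both parts by base change from the flatness of $\phi$ (Theorem~\ref{thm:Kos}(i)), reducing the Lemma to a statement about the fibres of $\phi$ over nilpotent points. Since $\bC[W]=\cH\otimes\cS(\fpp)$ is free, hence flat and torsion-free, over $\cS(\fpp)$, the induced morphisms $\phi|_Z\colon Z\to\bcO$ and $\phi|_Y\colon Y\to\cO$ are flat and $\bC[Z]=\bC[W]\otimes_{\cS(\fpp)}\bC[\bcO]$ is free over the integral domain $\bC[\bcO]$; in particular $\bC[Z]$ injects into $\bC[Z]\otimes_{\bC[\bcO]}\operatorname{Frac}\bC[\bcO]$, the ring of the generic fibre of $\phi|_Z$, and likewise for $Y$. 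Hence the reducedness of $Z$ and of $Y$ is in each case equivalent to that of the generic fibre of $\phi$ over $\bcO$, resp. $\cO$, and therefore — by homogeneity of $\cO\cong K_\bC/K_x$ — to the reducedness of $\phi^{-1}(x)$ for any single $x\in\cO$. For normality one invokes the descent of Serre's conditions $(R_1)$ and $(S_2)$ along flat morphisms: $Z$ is normal once $\bcO$ is normal (hypothesis) and every fibre of $\phi$ over $\bcO$ is normal, and $Y$ is normal once every fibre of $\phi$ over $\cO$ is normal (the base $\cO$ being smooth); the fibre over $0$ is $\bcN$, normal outside \eqref{eq:ddagger} by Theorem~\ref{thm:Kos}(iii). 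Thus everything comes down to the following claim: for every nilpotent $K_\bC$-orbit $\cC\subseteq\fpp^*$ and every $y\in\cC$, the scheme-theoretic fibre $\phi^{-1}(y)$ is reduced, and is normal when $(G,G')$ is not of type \eqref{eq:ddagger}.

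To prove this claim I would fix $w$ with $\phi(w)=y$ and analyse $\phi^{-1}(y)$ through the explicit quadratic defining equations read off from Table~\ref{tab:noncpt1}. In coordinates adapted to the signed Young diagram of $\cC$ — equivalently, to the ranks and radicals of the matrix components of $w$ — these equations decouple into those cutting out an affine space and those cutting out the null-cone fibre $\bcN_{G_0,G_0'}$ of a dual pair $(G_0,G_0')$ of strictly smaller rank, still in the stable range, so that $\phi^{-1}(y)\cong(\text{affine space})\times\bcN_{G_0,G_0'}$. This is precisely the degeneration and rank analysis already carried out in \cite{Daszkiewicz:2005}, \cite{NOZ1} and \cite{Ohta:1991C}, where each such fibre and its closure are described orbit by orbit; one then applies Theorem~\ref{thm:Kos}(ii) for reducedness (unconditional) and Theorem~\ref{thm:Kos}(iii) for normality, after observing that $(G_0,G_0')$ is of type \eqref{eq:ddagger} only if $(G,G')$ is — which is exactly why part (ii) of the Lemma, unlike part (i), carries the exclusion \eqref{eq:ddagger}.

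The first paragraph is routine commutative algebra given the flatness input; the real content is the claim, and the main obstacle is the verification of Serre's $(R_1)$ condition along the non-full-rank (singular) locus of $\phi^{-1}(y)$ for the pairs not excluded by \eqref{eq:ddagger}. If one is content to quote the literature, the explicit tables of \cite{Daszkiewicz:2005} already exhibit each fibre in the product form above, and then the claim — hence Lemma~\ref{lem:rednorm} — follows at once from Theorem~\ref{thm:Kos}.
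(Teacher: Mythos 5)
Your first paragraph is fine as far as it goes (flat base change, the torsion-freeness observation for reducedness, and the ``flat $+$ reduced/normal base $+$ reduced/normal fibres'' criterion, which is the same Proposition~11.3.13 of \cite{EGA:IV.3} the paper uses). But the whole weight of the argument is then placed on your Claim that for \emph{every} nilpotent $y\in\fpp^*$ the scheme-theoretic fibre $\phi^{-1}(y)$ is reduced, and normal outside \eqref{eq:ddagger} --- and this is exactly the hard content, which you do not prove. The sketch you give of it is inaccurate on two counts. First, the fibre over a nilpotent $y$ is not ``(affine space)\,$\times$\,(null cone of a smaller pair)'': e.g.\ for $(\Sp(2n,\bR),\rO(p,q))$ and $y=(y_1,y_2)$ with $\mathrm{rank}\,y_i=r_i$, the fibre is a product of two pieces, each a homogeneous bundle over a Stiefel-type variety $\rO(p,\bC)/\rO(p-r_1,\bC)$ with fibre a smaller quadratic null cone; there is no affine-space factor, and when $r_1\neq r_2$ the two null-cone pieces have different column sizes, so they do not assemble into the null fibre of a single smaller dual pair ``still in stable range'' as you assert. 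Second, and more seriously, even granting the correct bundle description, one must know that the \emph{scheme-theoretic} fibre (the quadratic equations, not their radical) has this form and that the smaller null cones are reduced/normal; the references you point to (\cite{Daszkiewicz:2005}, \cite{NOZ1}, \cite{Ohta:1991C}) classify orbits, moment-map images and closures set-theoretically, and do not supply the scheme-theoretic reducedness/normality of the fibres over arbitrary nilpotent points. So the ``real content'' of your proof is outsourced to literature that does not obviously contain it, and your explicit reduction, as stated, would not deliver it.

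The paper avoids this fibre-by-fibre analysis altogether, and this is the idea you are missing. Let $E_r$ (resp.\ $E_n$) be the set of $w\in W$ at which the fibre $W\times_{\fpp^*}\{\phi(w)\}$ is reduced (resp.\ normal); by Theorem~12.1.6 of \cite{EGA:IV.3} these loci are \emph{open} (using flatness from Theorem~\ref{thm:Kos}(i)), and they are clearly $K_\bC\times K'_\bC$-invariant. Since $\phi$ is an affine quotient map, it sends invariant closed subsets of $W$ to closed subsets of $\fpp^*$, so for any $z\in Z$ one has $0\in\overline{K_\bC\,\phi(z)}=\phi\bigl(\overline{K_\bC K'_\bC\cdot z}\bigr)$, hence $\overline{K_\bC K'_\bC\cdot z}$ meets $\bcN=\phi^{-1}(0)$, which lies in $E_r$ (and in $E_n$ outside \eqref{eq:ddagger}) by Theorem~\ref{thm:Kos}(ii),(iii); openness plus invariance then force $z\in E_r$ (resp.\ $E_n$). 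After that, Proposition~11.3.13 of \cite{EGA:IV.3} gives both parts of the Lemma, with only the null fibre ever being analysed. If you want to salvage your route, you would need to either prove your Claim honestly (a Kostant--Debarre--Ton-That type statement for each quadratic map at partially degenerate values, with the different-rank components treated separately), or replace it by this openness-plus-degeneration argument.
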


By the above lemma, we can also view $Z = \phi^{-1}(\overline{\cO})$
and $Y = \phi^{-1}(\cO)$ as the set theoretical inverse images.

\begin{proof}
  Our base field is $\bC$ so geometrically reduced
  (resp. geometrically normal) is equivalent to reduced
  (resp. normal).

  (i) Let $E_r$ (resp. $E_n$) be the set of elements in $W$ which is
  geometrically reduced (resp. geometrically normal) in the fiber of
  $\phi(w)$, i.e.
\[
E_r:= \Set{w\in W| w \text{ is geometrically reduced in } W\times_{\fpp^*}
  \phi(w)}.
\]
Since $\phi\colon W \to \fpp^*$ is faithfully flat, $E_r$
(resp. $E_n$) is open in $W$ by Theorem 12.1.1(vii)
(resp. Theorem~12.1.6(iv)) in~\cite{EGA:IV.3}. By
Theorem~\ref{thm:Kos}(ii) and (iii), $\bcN \subseteq E_r$ (resp. $\bcN
\subseteq E_n$).

\smallskip

We claim that $Z \subseteq E_r$ (resp. $Z \subseteq E_n$). We only prove
$Z \subseteq E_r$. The proof of $Z\subseteq E_n$ is the same.  Since
$E_r$ is open and $Z$ is closed, it suffices to prove that $E_r$
contains every closed point $z \in Z$. Indeed let $z \in
Z$ be a closed point. Since $\phi\colon W\to \fpp^*$ is
an affine quotient map, it maps a $K_\bC$-invariant closed subset in
$W$ to a closed subset in $\fpp^*$ (see Corollary 4.6 in \cite{PV}).
Therefore \[ \phi(\overline{K_\bC K'_\bC \cdot z}) =
\overline{\phi(K_\bC K'_\bC \cdot z)} = \overline{K_\bC \phi(z)} = \bcO \ni
0.
\]
Hence $\emptyset \neq \overline{K_\bC K'_\bC \cdot z} \cap \bcN
\subset \overline{K_\bC K'_\bC \cdot z}\cap E_r$. The subset $E_r$ is
open so $(K_\bC K'_\bC \cdot z) \cap E_r \neq \emptyset$. Finally $E_r$
contains $z$ because it is $K_\bC\times K'_\bC$-invariant.  This
proves our claim.

\smallskip

We note that $\phi|_Z \colon W\times_{\fpp^*} \bcO \to \bcO$ and
$\phi|_Y \colon W\times_{\fpp^*} \cO \to \cO$ are faithfully flat.
Since $\bcO$ is reduced and $Y \subseteq Z \subseteq E_r$, applying Proposition
  \ref{PA6} to $X_1 = Z$ (resp. $X_1 = Y$) and $X_2 = \bcO$ proves that $Z$ (resp. $Y$) is
  reduced. This gives (i).

  Since $\cO$ is smooth, it is normal. By assumption $\bcO$ is normal.
  The proof of (ii) follows a similar argument as that of (i). This completes
the proof of Lemma~\ref{lem:rednorm}.
\end{proof}

We state a consequence of Lemma \ref{lem:rednorm}.

\begin{lemma} \label{L21}
We have  $\bcOp = Z/K_\bC$, or equivalently, $\bC[\bcOp] =
  (\bC[W] \otimes_{S(\fpp)}\bC[\bcO])^{K_\bC}$.
\end{lemma}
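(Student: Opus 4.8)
The plan is to reduce the statement to the ring identity $\bC[Z]^{K_\bC}=\bC[\bcOp]$ and to prove the latter by locating the kernel of a natural surjection inside $\bC[\Wcp]^{K_\bC}$. By the First Fundamental Theorem of classical invariant theory (recalled in Appendix~\ref{SA1}), $\phi'$ identifies the affine quotient $\Wcp/\!/K_\bC$ with a closed subvariety $\phi'(\Wcp)$ of $\fpp'^*$, so that $\bC[\Wcp]^{K_\bC}=\bC[\phi'(\Wcp)]$; this is the ambient ring in which I will work.

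First I would set up the objects. Since $Z=\Wcp\times_{\fpp^*}\bcO$ has by definition coordinate ring $\bC[\Wcp]\otimes_{\cS(\fpp)}\bC[\bcO]$, the module on the right-hand side of the statement is exactly $\bC[Z]^{K_\bC}$, and by Lemma~\ref{lem:rednorm}(i) the ring $\bC[Z]$ is reduced, so $Z$ may be treated as the set-theoretic fibre $\phi^{-1}(\bcO)$. Because $\cS(\fpp)\twoheadrightarrow\bC[\bcO]$, base change yields a $K_\bC$-equivariant surjection $\bC[\Wcp]\twoheadrightarrow\bC[Z]$; since $K_\bC$ is reductive and the ground field has characteristic zero, the functor of $K_\bC$-invariants is exact, hence the restriction map $\bC[\Wcp]^{K_\bC}\twoheadrightarrow\bC[Z]^{K_\bC}$ is surjective.

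Next I would compute its kernel. Write $f\in\bC[\Wcp]^{K_\bC}$ as $f=g\circ\phi'$ with $g\in\bC[\phi'(\Wcp)]$ unique. Using that $\bC[Z]$ is reduced, $f$ maps to zero in $\bC[Z]^{K_\bC}$ if and only if $f$ vanishes at every point of $Z=\phi^{-1}(\bcO)$; since $f(w)=g(\phi'(w))$, this happens exactly when $g$ vanishes on $\phi'(\phi^{-1}(\bcO))$. By Theorem~\ref{thm:TLorb}(i) this set equals $\bcOp$ (this is precisely the definition of the theta lift $\theta(\bcO)=\bcOp$), and as $\bcOp$ is a reduced closed subvariety of $\phi'(\Wcp)$, the vanishing $g|_{\bcOp}=0$ is equivalent to $g\in I_{\bC[\phi'(\Wcp)]}(\bcOp)$. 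Thus the kernel is this ideal, and passing to the quotient gives $\bC[Z]^{K_\bC}\cong\bC[\phi'(\Wcp)]/I(\bcOp)=\bC[\bcOp]$, which is the asserted identity; equivalently, $\bcOp=Z/K_\bC$ as affine quotients.

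The exactness of $K_\bC$-invariants and the substitution $f=g\circ\phi'$ are routine bookkeeping. The one step genuinely drawing on the geometry built up earlier — and the point I would flag as the crux — is the equality $\phi'(\phi^{-1}(\bcO))=\bcOp$, i.e.\ that the theta lift of $\cO$ is exactly $\cO'$; the remainder is formal commutative algebra together with the reducedness of $Z$ from Lemma~\ref{lem:rednorm}.
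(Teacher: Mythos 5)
Your argument is correct and follows essentially the same route as the paper: the paper's proof consists of the reducedness of $\bC[Z]^{K_\bC}$ (from Lemma~\ref{lem:rednorm}(i)) together with the cited fact that $\phi'$ is an affine quotient map onto its (closed) image, and your proposal simply unpacks that cited fact into the explicit steps — exactness of $K_\bC$-invariants giving surjectivity of $\bC[W]^{K_\bC}\twoheadrightarrow\bC[Z]^{K_\bC}$, the kernel computation via reducedness, and the identification $\phi'(\phi^{-1}(\bcO))=\bcOp$ from Theorem~\ref{thm:TLorb}. So the proposal is a correct, more self-contained rendering of the paper's proof, with the same key ingredients.
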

\begin{proof}
  By Lemma \ref{lem:rednorm}(i), $\bC[Z] = (\bC[W]
  \otimes_{S(\fpp)}\bC[\bcO])$ is reduced.  By \eqref{eq22} and
  \cite{W}, $\phi'$ is an affine quotient map onto its image so
  $\bC[Z]^{K_\bC} = \bC[\bcOp]$. Also see Proposition~3(2) in
  \cite{Ohta:1991C}. This proves lemma.
\end{proof}

\smallskip

By Theorem~2.5 in \cite{NOZ1}, $Y = \phi^{-1}(\cO)$ contains an open dense $K_\bC\times K'_\bC$-orbit $Z^\circ$.  
Let $\partial Z^\circ = Y-Z^\circ$. By Theorem~\ref{thm:orbits}(i), $\partial Z^\circ = R\cap Y$ where $R =
W-W^\circ$  is the set of elements without full rank. 

\begin{lemma} \label{lem:codim2} Suppose the dual pair $(G,G')$ is in
  the stable range where $G$ is the smaller member and we exclude the
  dual pairs \eqref{eq:ddagger}. Then $\codim(Y, \partial Z^\circ)
  \geq 2$.
\end{lemma}

\begin{proof}
  If $\partial Z^\circ = \emptyset$, then there is nothing to prove.
  Now suppose $\partial Z^\circ \neq \emptyset$. Let $C =K_\bC\times
  K'_\bC$ and let $C_0 = K_0\times K'_0$ be its connected component.
  Since $C$ may not be connected, $Z^\circ$, $Y$ and $Z$ may not be
  irreducible.  We decompose $Z^\circ = \bigsqcup_{j\in J} Z_j^\circ$ in to
  $C_0$-orbits.  Each $Z_j^\circ$ is irreducible. Since $C/C_0$
  permutes $\set{Z_j^\circ|j\in J}$, $\dim Z^\circ = \dim Z_j^\circ$
  for all $j\in J$.  Let $Z_j = \overline{Z_j^\circ}$ in~$W$ and let
  $Y_j = Z_j\cap Y$. Then $Z_j$ and $Y_j$ are irreducible and
  $C_0$-invariant. In fact $Z = \bigcup_{j\in J} Z_j$ and $Y =
  \bigcup_{j\in J} Y_j$ are the decompositions of $Z$ and $Y$
  respectively into irreducible components.

  Let $\bcN = \phi^{-1}(0)$ be the closed null cone and let $\partial
  \cN = \bcN - \cN$. In the stable range, it is known that $\partial
  \cN = R \cap \bcN$ . Furthermore, if the dual pair is not
  \eqref{eq:ddagger}, then $\codim(\bcN,\partial \cN) = \dim \bcN
  -\dim \partial \cN \geq 2$.

Consider $\phi|_Z\colon Z\to \bcO$. By Theorem~\ref{thm:Kos}~(i), we have
  \[
  \dim Y = \dim Z = \dim \bcO +\dim \bcN. 
  \]
 
We claim that $\dim \partial Z^\circ \leq \dim Y -2$ which will prove
the lemma.  It suffices to show that for any closed point
$z\in \partial Z^\circ$, $\dim_z \partial Z^\circ \leq \dim Y -2$.
Here $\dim_v V$ denotes the Krull dimension of the local ring $\cO_{V,v}$ at a point $v$ in a variety $V$.

 We
  consider the morphism $\phi|_{R\cap Z} \colon R\cap Z \to
  \bcO$.
  By the semi-continuity
  of fiber dimension, the set $E = \set{w\in R\cap
    Z|\dim_w\phi|_{R\cap Z}^{-1}(w) \leq
    \dim \partial \cN}$ is open.
  For $z\in \partial Z^\circ$, let $S= K_\bC K'_\bC z$ be the orbit of $z$. Since $\overline{S}$ is
  $K'_\bC$-invariant and closed, the discussion after \eqref{eq22}
  shows that $\phi(\overline{S})$ is closed. In fact
  $\phi(\overline{S}) = \bcO$, because $\phi(\overline{S})$ is closed and contains
  $\phi(\partial Z^\circ) = \cO$.
  This implies $0 \in
  \phi(\overline{S})$ and $\emptyset \neq \overline{S} \cap
  \bcN\subseteq R\cap \bcN\subseteq
  \pcN \subseteq E$.
  Therefore $z\in E$, i.e. 
  $\dim_z \phi|_{R\cap Z}^{-1}(\phi(z))\leq \dim \pcN$. Hence
  \[
\begin{split}
  \dim_z \partial Z^\circ \leq& \dim_z R\cap Z \leq \dim_{\phi(z)}\bcO
  + \dim_z \phi|_{R\cap
    Z}^{-1}(\phi(z)) \\
  \leq& \dim \bcO + \dim \partial \cN \leq \dim \bcO +\dim \bcN -2 =
  \dim Y -2.
\end{split}
\]
This proves the claim and the lemma. 
\end{proof}

\section{Invariants of contragredient representations}
\label{sec:dual}

In this section, we state some known facts about the invariants of
contragredient representations. Since the proofs are not easily
available elsewhere, we supply them as well.

Let $\sfG$ be a real reductive group with complexified Lie algebra $\sfg$ and let $\sfK$ be a maximal compact subgroup
of $\sfG$. Let $(\varrho,V)$ be a $(\sfg,\sfK)$-module of finite
length. Let $(\varrho^*, V^*)$ be its contragredient representation
where $V^* = \Hom(V,\bC)_{\sfK-\text{finite}}$.

We recall the variety $\VC(V)$ associated to the annihilator
ideal $\Ann V = \Ann_{\cU(\sfg)}V$.  It is a subvariety in the
nilpotent cone of $\sfg^*$ cut out by the graded ideal $\Gr(\Ann V)$.

\begin{prop} \label{P24}
We have $\VC(V^*) = \VC(V)$.
\end{prop}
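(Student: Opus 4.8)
The statement to prove is $\VC(V^*) = \VC(V)$, i.e. that the associated variety of the annihilator in $\fgg^*$ is unchanged under passing to the contragredient. The plan is to relate $\Ann_{\cU(\fgg)}V^*$ to $\Ann_{\cU(\fgg)}V$ via the principal (transpose) anti-automorphism of $\cU(\fgg)$. Recall that $V^* = \Hom(V,\bC)_{K\text{-finite}}$ carries the action $(\rho^*(X)\lambda)(v) = -\lambda(\rho(X)v)$ for $X\in\fgg$, which extends to $\rho^*(u) = \rho(u^{\mathsf t})^{\mathsf t}$ where $u\mapsto u^{\mathsf t}$ is the unique anti-automorphism of $\cU(\fgg)$ restricting to $X\mapsto -X$ on $\fgg$. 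The first step is therefore to observe that $u$ annihilates $V^*$ if and only if $u^{\mathsf t}$ annihilates $V$; that is,
\[
\Ann_{\cU(\fgg)}V^* = \bigl(\Ann_{\cU(\fgg)}V\bigr)^{\mathsf t}.
\]

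The second step is to pass to associated graded ideals. The transpose anti-automorphism is filtration-preserving (it maps $\cU_n(\fgg)$ to $\cU_n(\fgg)$), so it induces a map on $\Gr\cU(\fgg) = \cS(\fgg) = \bC[\fgg^*]$. Since on the degree-one part it is $X\mapsto -X$, the induced graded map is precisely the algebra automorphism $\epsilon$ of $\cS(\fgg)$ extending $X\mapsto -X$ (the same kind of automorphism appearing in the footnote after \eqref{eq19b}), which on $\fgg^*$ is the linear involution $\xi\mapsto -\xi$. Consequently
\[
\Gr\bigl(\Ann_{\cU(\fgg)}V^*\bigr) = \epsilon\bigl(\Gr(\Ann_{\cU(\fgg)}V)\bigr),
\]
so the variety $\VC(V^*)$ is the image of $\VC(V)$ under the map $\xi\mapsto -\xi$ on $\fgg^*$. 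The final step is to note that $\VC(V)$, being an $\Ad^*G_\bC$-invariant subvariety of the nilpotent cone, is stable under $\xi\mapsto -\xi$: the nilpotent cone is a cone, hence invariant under all scalars, in particular under $-1$; alternatively $-1$ is realized on $\fgg^*$ by $\Ad^*$ of a suitable element (e.g. an element of the Cartan realizing the longest Weyl group element composed with a sign, or simply using that $-\mathrm{Id}$ is in the closure of $\Ad^* G_\bC$-orbit behavior on the nilpotent cone through homogeneity). Either way $\epsilon(\VC(V)) = \VC(V)$, and combining the three steps gives $\VC(V^*) = \VC(V)$.

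The only genuinely delicate point is the verification that the transpose anti-automorphism is well-defined and filtration-preserving and that its graded map is $\epsilon$; this is standard PBW bookkeeping (one checks it on monomials $X_{i_1}\cdots X_{i_n}$, where reordering to apply the anti-automorphism only introduces lower-order correction terms that die in the associated graded), so I would state it cleanly rather than grind through it. A subtlety worth a sentence: one must be careful that $\Gr$ of an ideal can in general be larger than the ideal generated by the symbols, but here the equality $\Gr(I^{\mathsf t}) = \epsilon(\Gr I)$ holds at the level of the full graded ideals because $u\mapsto u^{\mathsf t}$ is a filtered bijection $\cU(\fgg)\to\cU(\fgg)$ carrying $I$ isomorphically onto $I^{\mathsf t}$ and inducing $\epsilon$ on $\Gr\cU(\fgg)$, so it carries the subspace $\Gr I\subseteq\cS(\fgg)$ onto $\epsilon(\Gr I)$. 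Thus the main obstacle is merely organizational rather than mathematical.
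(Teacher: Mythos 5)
Your proposal follows exactly the paper's route: the anti-automorphism $\iota$ of $\cU(\fgg)$ extending $X\mapsto -X$ carries $\Ann_{\cU(\fgg)}V$ onto $\Ann_{\cU(\fgg)}V^*$, its associated graded map is the sign involution on $\fgg^*$, hence $\VC(V^*)=-\VC(V)$, and one finishes by showing $-\VC(V)=\VC(V)$. Your first two steps are correct and coincide with the paper's proof.

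The weak point is your justification of the final step. That the ambient nilpotent cone is stable under scalars does not by itself give stability of the subvariety $\VC(V)$: a subvariety of a cone need not be a cone. Your alternative claim, that $-\mathrm{Id}$ on $\fgg^*$ is realized as $\Ad^*(g)$ for some $g\in G_\bC$, is false in general: this would force $-\mathrm{Id}$ to be a Lie algebra automorphism of $\fgg$, which fails for any nonabelian $\fgg$ (concretely, for $\fgg=\mathfrak{sl}_2$ the image of $\Ad$ is $\mathrm{SO}_3(\bC)$, which does not contain $-\mathrm{Id}$). The fact you need is nonetheless true and has a one-line repair, in either of two flavors: observe that $\VC(V)$ is by definition cut out by the graded ideal $\Gr(\Ann_{\cU(\fgg)}V)$, which is homogeneous, so $\VC(V)$ is itself a cone and in particular stable under $\xi\mapsto-\xi$; or argue as the paper does, that $\VC(V)$ is a union of nilpotent $G_\bC$-orbits and each nilpotent orbit is stable under $\bC^\times$-scaling (via an $\mathfrak{sl}_2$-triple through a nilpotent element, $\Ad(\exp(th))$ rescales it by $e^{2t}$), whence $-\VC(V)=\VC(V)$. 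With that substitution your argument is complete and agrees with the paper's.
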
 
\begin{proof}
  Let $\iota$ be the anti-involution on $\cU(\sfg)$ such that
  $\iota(X) = -X$ and $\iota(XY) = YX$ for all $X, Y \in \sfg$.
  Passing to the graded module $\bC[\sfg^*] = S(\sfg) =\Gr\cU(\sfg)$,
  $\Gr\iota$ is given by pre-composing the map on $\sfg^*$ defined by
  $\sfg^*\ni\lambda\mapsto -\lambda$. Then $\iota(\Ann V) = \Ann V^*$
  and $\VC(V^*) = -\VC(V)$. On the other hand, $\VC(V)$ is a union of
  nilpotent $\sfG_\bC$-orbits so $\VC(V)= -\VC( V)$. This proves the
  proposition.
\end{proof}

\subsection{} \label{SB1}  Let~$G$ and $K$
as in Table \ref{tab:sstabncp}. Let $\wtG$ and $\wtK$ be their
respective inverse images in $\wtSp(W_\bR)$. We relate the associated cycles of an
irreducible Harish-Chandra module of $\wtG$ and its contragredient module. By Proposition~4.I.8 in
\cite{MVW}, Theorem 2.4 in \cite{Sun} and \cite{LST}, there is an
automorphism $C \in \Aut(\wtG)$ such that for all semisimple $g \in
\tG$, $C(g)$ is conjugate to $g^{-1}$ in $\wtG$.  By replacing $C$
with $\Ad(\tilde{g}) \circ C$ for some $\tilde{g} \in \wtG$ if
necessary, we may further assume that $C$ stabilizes $\wtK$ and a
Cartan subgroup of $\wtK$. Hence $\Ad_C(\fkk) = \fkk$ and $\Ad_C(\fpp)
= \fpp$.  We call $C$ a {\it dualizing automorphism}.  If $\wtG$ is
the trivial double cover of a connected real algebraic group, then we
may choose~$C$ to be the Chevalley involution~\cite{Adams}.

Let $(\varrho, V)$ be an irreducible $(\fgg, \wtK)$-module.  We define
a representation $(\varrho^C, V^C)$ where $V^C = V$, $\varrho^C(k) =
\varrho(C(k))$ for all $k \in \wtK$ and $\varrho^C(X) =
\varrho(\Ad_C(X))$ for all $X \in \cU(\fgg)$. Then $(\varrho^C, V^C)$
is isomorphic to the contragredient representation $(\varrho^*, V^*)$
(c.f.  Corollary~1.2 in~\cite{Adams} and Theorem 3.1 in \cite{Sun}).

If $\cO$ is a nilpotent $\wtK_\bC$-orbit in $\fpp^*$ generated by $x$,
then $\Ad_C^*(\cO)$ is a nilpotent $\wtK_\bC$-orbit in $\fpp^*$
generated by $y := \Ad_C^*(x)$. We recall that $\wtK_x$ is the stabilizer of $x$ in $\wtK_\bC$. Then $\wtK_y =
C(\wtK_x)$. If $\chi_x$ is a $\wtK_x$-module
(resp. $\wtK_x$-character), then $\chi_x\circ C$ is a $\wtK_y$-module
(resp. $\wtK_y$-character).

\begin{prop} 
We have
\begin{enumerate}[(i)]
\item $\AV(\varrho^*) = \Ad_C^*(\AV(\varrho))$ and 

\item $\AC(\varrho^*) = \Ad_C^*(\AC(\varrho))$. 

\item Suppose $x\in \fpp^*$ generates an open orbit in $\AV(\varrho)$.
  Let $\chi_x$ be the isotropy character of $\varrho$ at $x$.  Then
  $\chi_{x}\circ C$ is the isotropy character of $\varrho^*$ at
  $\Ad_C^*(x)$.
\end{enumerate}
\end{prop}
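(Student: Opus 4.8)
The plan is to deduce all three statements from the isomorphism $(\rho^*,V^*)\cong(\rho^C,V^C)$ recalled above, by checking that the invariants of $\rho^C$ are obtained from those of $\rho$ by transport along $C$; thus it suffices to prove (i)--(iii) with $\rho^*$ replaced by $\rho^C$.

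First I would note that a good filtration $0\subset F_0\subset F_1\subset\cdots$ of $V$ is also a good filtration of $V^C$: since $V^C=V$ as a vector space, $C(K)=K$, $\Ad_C(\fkk)=\fkk$ and $\Ad_C(\fpp)=\fpp$, each $F_j$ stays $K$-stable with $\fkk\cdot F_j\subseteq F_j$ and $\fpp\cdot F_j\subseteq F_{j+1}$ for the twisted action, and a generating set over $\cS(\fpp)$ is unchanged. Hence $\Gr V^C$ equals $\Gr V$ as a graded vector space, but with the $\cS(\fpp)$-module structure restricted along the algebra automorphism of $\cS(\fpp)$ induced by $\Ad_C|_\fpp$ and with the $K_\bC$-action precomposed with $C$. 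Geometrically, the $K_\bC$-equivariant coherent sheaf $\sA^C$ of $\Gr V^C$ is the pushforward $(\Ad_C^*)_*\sA$ of the sheaf $\sA$ of $\Gr V$ along the scheme automorphism $\Ad_C^*\colon\fpp^*\to\fpp^*$, with the equivariant structure twisted by $C$; since $C^2=\id$ the map $\Ad_C^*$ is an involution, so this also equals $(\Ad_C^*)^*\sA$. As recalled just before the statement, $\Ad_C^*$ permutes the nilpotent $K_\bC$-orbits with $K_{\Ad_C^*(x)}=C(K_x)$, and therefore
\[
\AV(\rho^C)=\Supp\sA^C=\Ad_C^*(\Supp\sA)=\Ad_C^*(\AV(\rho)),
\]
giving (i).

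For (ii) and (iii) I would use that pushforward (resp. pullback) along an automorphism of schemes is exact and preserves generic rank along a subvariety as well as the property of being generically reduced there. Hence a filtration $0\subset\sA_0\subset\cdots\subset\sA_n=\sA$ by $K_\bC$-equivariant coherent subsheaves with $\sA_l/\sA_{l-1}$ generically reduced on each $\overline{\cO_j}$ is carried to a filtration $0\subset\sA_0^C\subset\cdots\subset\sA_n^C=\sA^C$ with $\sA_l^C/\sA_{l-1}^C$ generically reduced on each $\Ad_C^*(\overline{\cO_j})$. The multiplicity $m(\cO_j,\rho)$, being the sum over $l$ of the generic ranks of $\sA_l/\sA_{l-1}$ on $\overline{\cO_j}$, is then equal to $m(\Ad_C^*(\cO_j),\rho^C)$, so $\AC(\rho^C)=\Ad_C^*(\AC(\rho))$, which is (ii). For (iii), if $x$ generates an open orbit $\cO$ of $\AV(\rho)$ then $\Ad_C^*(x)$ generates the open orbit $\Ad_C^*(\cO)$ of $\AV(\rho^C)$; since the fiber at a point $y$ of a sheaf pulled back along $\Ad_C^*$ is the fiber of the original sheaf at $\Ad_C^*(y)$, and $\Ad_C^*(\Ad_C^*(x))=x$, I would obtain
\[
\chi_{\Ad_C^*(x)}=\bigoplus_l i_{\Ad_C^*(x)}^*\bigl(\sA_l^C/\sA_{l-1}^C\bigr)\;\cong\;\bigoplus_l i_x^*\bigl(\sA_l/\sA_{l-1}\bigr)=\chi_x
\]
as vector spaces, and since the residual $K_{\Ad_C^*(x)}=C(K_x)$-action is by construction precomposed with $C$, this is an isomorphism of $C(K_x)$-modules exhibiting $\chi_{\Ad_C^*(x)}=\chi_x\circ C$; passing to classes in the Grothendieck group and using $\rho^*\cong\rho^C$ gives (iii).

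The step I expect to require the most care is the bookkeeping of the twisted $K_\bC$-equivariant structures, both on $\Gr V^C$ and on the transported length filtration of $\sA$: one has to verify precisely that precomposition with $C$ on the group side is what turns $\chi_x$ into $\chi_x\circ C$, and that $\Ad_C^*$, being an isomorphism, genuinely carries a filtration generically reduced on the $\overline{\cO_j}$ to one generically reduced on the $\Ad_C^*(\overline{\cO_j})$ (so that the isotropy representation, not merely its dimension, is correctly transported). Everything else is routine.
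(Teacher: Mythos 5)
Your proposal is correct and follows essentially the same route as the paper: the paper's proof likewise observes that a good filtration of $V$ remains good for $V^C$ (since $C(K)=K$ and $\Ad_C(\fgg)=\fgg$), so that the $(\cS(\fpp),K_\bC)$-action on $\Gr V^C$ is that on $\Gr V$ precomposed with $C$, and then transports all invariants along $\Ad_C^*$ using $\rho^*\cong\rho^C$. You merely spell out the sheaf-level bookkeeping that the paper leaves implicit.
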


\begin{proof}
  Let $\{ V_j \}_{j \in \bN}$ be an good filtration of $(\varrho,V)$.
  Then $\{ V_j \}_{j \in \bN}$ is also a good filtration of
  $(\varrho^C, V^C)$ since $C(\wtK) = \wtK$ and $\Ad_C(\fgg) = \fgg$.
  Therefore the $(\cS(\fpp), \wtK_\bC)$ action on $\Gr V^C$ is given
  by pre-composing $C$, i.e.  $\Gr V^C = \Gr V\circ C$. This proves
  the lemma.
\end{proof}

\subsection{} \label{SB2} Let $(\rho, V)$ be an irreducible
$(\fgg,\wtK)$-module which is a quotient of $\sY$. Let \linebreak $\Set{V_j=
\cU_j(\fgg)V_\tau'}_{j \in \bN}$  be the filtration generated by lowest degree
$\wtK$-type $V_\tau$. For a regular semisimple element $k$ in a Cartan
subgroup of $\wtK$, one can show that $C(k)$ is $\wtK$-conjugate to
$k^{-1}$. This implies that $\tau \circ C|_{\wtK} \simeq \tau^*$. We
fix a $(\fgg,\wtK)$-module isomorphism between $V^C$ and $V^*$. Since
$V_\tau$ has multiplicity~$1$ in $V$, $V_{\tau^*}$ has
multiplicity~$1$ in $V^*$ too.  We set $V_j^C :=
V_j$ and $V^*_j := \cU_j(\fgg)V_{\tau^*}$. Therefore the filtration $\Set{ V_j^C}_{j \in \bN}$ defined on $V^C$ is the same as the filtration $\Set{ V^*_j}_{j \in \bN} $ defined on $V^*$.

\begin{bibdiv}
\begin{biblist}

\bib{Adams}{article}{
  author = {Adams, J.},
  title = {The real Chevalley Involution},
  note = {Preprint},
  year = {2012},
}

\bib{Be}{article}{
  author = {Bernstein, J.},
  title = {Modules over the ring of differential operators; the study of
fundamental solutions of equations with constant coefficients},
  journal = {Functional Analysis and its Applications},
  year = {1972},
  volume = {5},
number = {2},
  pages = {1--16},
}

\bib{Br}{article}{
  author = {Brylinski, R.},
  title = {Dixmier Algebras for Classical Complex 
Nilpotent Orbits via Kraft-Procesi Models I},
  book={ 
title={The orbit method in geometry and physics: in honor of A.A. Kirillov},
  publisher = {Birkh\"{a}user, Boston-Basel Berlin},
  year = {2003},
}
}

\bib{CPS}{article}{
  author = {Cline, E.},
  author = {Parshall, B.},
  author = {Scott, L.},
  title = {A Mackey imprimitivity theory for algebraic groups},
  journal = { Math. Z.},
  year = {1983},
  volume = {182},
  pages = {447--471},
}

\bib{DKP}{article}{
  author = {Daszkiewicz, Andrzej},
  author = {Kra\'skiewicz, Witold},
  author = {Przebinda, Tomasz},
  title = {Dual pairs and Kostant-Sekiguchi correspondence. I},
  journal = {J. Algebra},
  year = {2002},
  volume = {250},
  pages = {408--426},
}

\bib{Daszkiewicz:2005}{article}{
  author = {Daszkiewicz, Andrzej},
  author = {Kra\'skiewicz, Witold},
  author = {Przebinda, Tomasz},
  title = {Dual pairs and Kostant-Sekiguchi correspondence. II. Classification
	of nilpotent elements},
  journal = {Central European Journal of Mathematics},
  year = {2005},
  volume = {3},
  pages = {430--474},
  issn = {1895-1074},
  number = {3},
  publisher = {Versita, co-published with Springer-Verlag GmbH},
  url = {http://dx.doi.org/10.2478/BF02475917},
}

\bib{DT}{article}{
author={Debarre, Olivier},
author={Tuong, Ton-That},
title={Representations of $SO(k,\bC)$ on harmonic polynomials on a null
    cone},
journal={Proc. Am. Math. Soc.},
volume={112},
number={1},
pages={31--44},
year={1991},
}

\bib{EGA:IV.3}{article}{
author={Grothendieck, A.},
author={Dieudonn\'e, J.},
title={\'El\'ements de g\'eom\'etrie alg\'ebrique(r\'edig\'es avec la collaboration de Jean Dieudonn\'e) IV: \'Etude locale des
    sch\'emas et des morphismes de sch\'emas (Troisi\`eme partie)},
journal = {Publ. Math., Inst. Hautes \'Etud. Sci.},
volume={28},
year={1966},
}

\bib{EHW}{article}{
  title={A classification of unitary highest weight modules},
author={T. J. Enright},
author={Howe, Roger},
author={N. R. Wallach},
  book ={
  title={Representation theory of reductive groups (Park City, Utah, 1982)},
  series={Progr. Math.},
  volume={40},
  year={1983},
  publisher={Birkh\"{a}user, Boston, MA},},
  pages={97--147},
}

\bib{GZ}{article}{
  title={Theta lifting of generalized Whittaker models associated to nilpotent orbits},
  author = {Gomez, Raul},
  author = {Zhu, Chen-bo},
  year = {2013},
  note={To appear in GAFA},
}

\bib{GH}{book}{
  title={Local cohomology: a seminar given by A. Grothendieck, Harvard University, Fall 1961},
 % author={Grothendieck, A.},
  author={Hartshorne, Robin},
  series = {Lecture Notes in Mathematics},
  volume={41},
  year={1967},
  publisher={Springer},
}

\bib{HS}{book}{
  author = {Hartshorne, Robin},
  title = {Algebraic Geometry},
publisher={Graduate Texts in Mathematics, 52. New York-Heidelberg-Berlin: Springer-Verlag}, 
year={1983},
}

\bib{He:2000}{article}{
  author = {He, Hongyu},
  title = {Theta Correspondence in Semistable Range I: Construction and Irreducibility},
  journal = {Commun. Contemp. Math.},
  year = {2000},
  volume = {2},
  pages = {255--283},
  number = {02},
  publisher = {World Scientific},
}

\bib{Howe:1983reci}{article}{
  title={Reciprocity laws in the theory of dual pairs},
  author={Howe, Roger},
  book ={
  title={Representation theory of reductive groups (Park City, Utah, 1982)},
  series={Progr. Math.},
  volume={40},
  year={1983},
  publisher={Birkh\"{a}user, Boston, MA},},
  pages={159--175},
}

\bib{Howe89}{article}{,
  author = {Howe, Roger},
  title = {Transcending classical invariant theory},
  journal = {J. Amer. Math. Soc.},
  year = {1989},
  volume={2},
  number ={3},
  pages = {535--552},
}

\bib{Howe95}{article}{,
  author = {Howe, Roger},
  title = {Perspectives on invariant theory: Schur duality, multiplicity-free actions and beyond},
  journal = {Piatetski-Shapiro, Ilya (ed.) et al., The Schur lectures (1992). Ramat-Gan: Bar-Ilan University, Isr. Math. Conf. Proc. 8,},
  year = {1995},
  pages = {1--182},
}

\bib{JaV}{article}{
  author = {Jakobsen, Hans Plesner},
  author = {Vergne, Michele},
  title = {Restrictions and expansions of holomorphic representations},
  journal = {J. Func. A.},
  year = {1979},
  volume = {34},
  pages = {29--53},
  number = {1},
}

\bib{KaV}{article}{
  author = {Kashiwara, M.},
author = {Vergne, M.},
  title = {On the Segal-Shale-Weil representations and harmonic polynomials},
  journal = {Invent. Math.},
  year = {1978},
  volume = {44},
  pages = {1--47},
  number = {1},
  month = {Feb},
}

\bib{Kostant}{article}{
author={Kostant, Bertram},
title={Lie group representations on polynomial rings},
journal={Am. J. Math. },
volume={85},
pages={327--404},
year={1963},
}

\bib{KP}{article}{
author={Kraft, H.},
author={Procesi, C.},
title={On the geometry of conjugacy classes in classical groups},
journal={Comment. Math. Helv.},
volume={57},
pages={539--602},
year={1982},
}

\bib{Li1989}{article}{,
  author = {Li, Jian-Shu},
  title = {Singular unitary representations of classical groups},
  journal = {Invent. Math.},
  year = {1989},
  volume = {97},
  pages = {237-255},
  number = {2},
  url = {http://dx.doi.org/10.1007/BF01389041},
}

\bib{LST}{article}
{
year={2013},
issn={0025-5874},
journal={Math. Z.},
doi={10.1007/s00209-013-1246-6},
title={MVW-extensions of quaternionic classical groups},
url={http://dx.doi.org/10.1007/s00209-013-1246-6},
publisher={Springer Berlin Heidelberg},
keywords={Classical group; Quaternion algebra; MVW-extension; 20G20},
author={Lin, Yanan},
author= {Sun, Binyong},
author={Tan, Shaobin},
pages={1-9},
}

\bib{Lo}{article}{
    author = {Loke, Hung Yean},
    title = {Howe quotients of unitary characters and unitary lowest
      weight modules. With an appendix by S. T. Lee},
    journal = {Represent. Theory},
    volume={10},
    year = {2006},
    pages ={21--47},
}

\bib{LMT}{article}{
author={H. Y. Loke},
author= {Ma, Jia-Jun},
author={U.-L. Tang},
  title = {Associated cycles of local theta lifts of unitary characters and unitary lowest weight modules},
    journal = {arXiv:1207.6451},
  organization = {National University of Singapore},
  year = {2012},
}

\bib{Ma:thesis}{thesis}{
  author = {Ma, Jia-Jun},
  title = {Two topics in local theta correspondence},
  organization = {National University of Singapore},
  year = {2012},
}

\bib{MVW}{book}{
  author={Moeglin, C.},
  author={Vigneras, M. F.},
  author={Waldspurger, J. L.},
title={Correspondances de Howe sur un corps $p$-adique},  
series={Lecture Notes in Mathematics},
  volume={1291},
  year={1987},
  pages={159--175},
  publisher={Springer Verlag, Berlin},
}

\bib{MW}{article}
{author={Moeglin, C.},
author ={Waldspurger, J.L.},
title={Mod\`eles de Whittaker d\'eg\'en\'eres pour des groupes p-adiques.
    (Degenerate Whittaker models of p-adic groups)},
language={French},
journal={Math. Z. },
volume={196},
pages={427--452},
year={1987}
}

\bib{NOT}{article}{
  title={Bernstein degree and associated cycles of Harish-Chandra modules-Hermitian symmetric case},
  author={Nishiyama, K. },
  author ={Ochiai, H.},
  author = {Taniguchi, K.},
  journal={Ast{\'e}risque},
  volume={273},
  pages={13--80},
  year={2001},
 }

\bib{NOZ1}{article}{
  author = {Nishiyama, Kyo},
  author = {Ochiai, H.},
  author = {Zhu, Chen-Bo},
  title = {Theta lifting of nilpotent orbits for symmetric pairs},
  journal = {Trans. AMS},
  year = {2006},
  volume = {358},
  pages = {2713--2734},
  number = {6},
}

\bib{NZ}{article}{
  author = {Nishiyama, Kyo},
  author = {Zhu, C.-B.},
title={Theta lifting of unitary lowest weight modules and their associated cycles},
journal={Duke Math. J.},
volume={125},
number={3},
pages={415--465},
year={2004}
}

\bib{Ohta:1991C}{article}{
  author = {Ohta, Takuya},
  title = {The closure of nilpotent orbits in the classical symmetric pairs and their singularities},
  journal = {Tohoku Math. J.},
  volume={43},
  number={2},
  pages={161--211},
  year={1991},
  publisher={Tohoku University},
}

\bib{PV}{article}{
  title={Invariant Theory},
  author={Popov, VL},
  author={Vinberg, EB},
  book={
  title={Algebraic Geometry IV: Linear Algebraic Groups, Invariant Theory},
  series={Encyclopaedia of Mathematical Sciences},
  volume={55},
  year={1994},
  publisher={Springer},}
}

\bib{PPz}{article}{
author={Protsak, V.} ,
author={Przebinda, T.},
title={On the occurrence of admissible representations in the real Howe
    correspondence in stable range},
journal={Manuscr. Math.},
volume={126},
number={2},
pages={135--141},
year={2008}
}

\bib{Pz}{article}{
author={Przebinda, T.},
title={Characters, dual pairs, and unitary representations},
journal={Duke Math. J. },
volume={69},
number={3},
pages={547--592},
year={1993}
}

\bib{Sun}{article}{
author={Sun, Binyong},
title={A Note on MVW-involutions},
pages={305--314},
book={
title={Fifth International Congress of Chinese Mathematicians},
number={Part 1},
year={2012},
publisher={AMS and IP},
}}

\bib{Vo89}{article}{
  author = {Vogan, David A. },
  title = {Associated varieties and unipotent representations},
 booktitle ={Harmonic analysis on reductive groups, Proc. Conf., Brunswick/ME
    (USA) 1989,},
  series = {Prog. Math.},
 volume={101},
  publisher = {Birkh\"{a}user, Boston-BaselBerlin},
  year = {1991},
pages={315--388},
  editor = {W. Barker and P. Sally},
}

\bib{W}{book}{
  title={The classical groups: their invariants and representations},
  author={Weyl, H.},
series={Princeton Landmarks in Mathematics.},
publisher={Princeton University Press.},
  volume={1},
  year={1997},
}

\bib{Ya}{article}{
 title={Cayley transform and generalized Whittaker models for irreducible highest weight modules},
  author={Yamashita, H.},
  journal={Ast{\'e}risque},
  number={273},
  pages={81--138},
  year={2001},
 }

\bib{Yn1}{article}{ author={Yang, Liang}, 
title={On quantization of spherical nilpotent orbits of $\fgg$-height 2}, 
journal={HKUST Thesis}, 
year={2010},}

\bib{Yn2}{article}{
 title={On the quantization of spherical nilpotent orbits},
  author={Yang, Liang},
  journal={Trans. Amer. Math. Soc},
  number={365},
  pages={6499-6515},
  year={2013},
 }

\end{biblist}
\end{bibdiv}
\end{document}